\newcommand{\tr}{\operatorname{tr}}
\newcommand{\ad}{\operatorname{ad}}
\newcommand{\bsxi}{{\boldsymbol{\xi}}}
\newcommand{\bseta}{{\boldsymbol{\eta}}}
\newcommand{\Sec}{\operatorname{Sec}}
\newcommand{\pardeg}{\operatorname{par\,deg}}
\newcommand{\su}{\mathfrak{su}(2)}
\newcommand{\Hpar}{H_\mathrm{par}}
\newcommand{\scNa}{\scN_\bsalpha}
\newcommand{\scNp}{\scN_{\bsalpha_+}}
\newcommand{\scNm}{\scN_{\bsalpha_-}}
\newcommand{\scNz}{\scN_{\bsalpha_0}}
\newcommand{\scNpm}{\scN_{\bsalpha_\pm}}
\newcommand{\Sigmap}{\Sigma_{\bsalpha_+}}
\newcommand{\Sigmam}{\Sigma_{\bsalpha_-}}
\newcommand{\Sigmaz}{\Sigma_{\bsalpha_0}}
\newcommand{\Sigmapm}{\Sigma_{\bsalpha_\pm}}
\newcommand{\scMp}{\scM_{\bsw_+}}
\newcommand{\scMm}{\scM_{\bsw_-}}
\newcommand{\scMz}{\scM_{\bsw_0}}
\newcommand{\Xo}{X^{\circ}}
\newcommand{\RS}{\Sigma}
\newcommand{\alcove}{A_{\frakt}}
\title{Goldman systems and bending systems}
\author{Yuichi Nohara and Kazushi Ueda}
\date{}
\begin{document}

\maketitle

\begin{abstract}
We show that the moduli space
of parabolic bundles on the projective line
and the polygon space are isomorphic,
both as complex manifolds
and symplectic manifolds equipped with structures of completely integrable systems,
if the stability parameters are 
small.
\end{abstract}

\section{Introduction}

Let $\scN_\bsalpha$ be
the moduli space of semi-stable
parabolic bundles of rank 2
on the projective line $X$
with $n$ marked points
$z_1, \dots, z_n$,
where
$
 \bsalpha \in (0, 1/2)^n
$
is the parameter
for the parabolic weight.
The moduli space $\scN_\bsalpha$ is a smooth projective manifold
for a generic choice of $\bsalpha$.
Mehta and Seshadri
\cite{Mehta-Seshadri}
gave a construction of $\scN_\bsalpha$
using geometric invariant theory,
and showed that it is diffeomorphic to the moduli space
of unitary representations of the fundamental group
of the punctured Riemann surface
$X \setminus \{ z_1, \dots, z_n \}$.


With any pair-of-pants decomposition
of the punctured Riemann surface
$X \setminus \{ z_1, \dots, z_n \}$,
one can associates a completely integrable system
on $\scN_\bsalpha$
called the {\em Goldman system}
\cite{Goldman_IFLG}.
The Goldman system resembles
the moment map of a toric variety
\cite{Weitsman_RPMS,
Jeffrey-Weitsman_BSO,
Jeffrey-Weitsman_TS,
Jeffrey-Weitsman_TSII},
although the natural complex structure on $\scN_\bsalpha$
is not preserved by the action
of the Goldman's Hamiltonians.
Even worse,
the moduli space $\scN_\bsalpha$ as a complex manifold
usually does not admit a structure of a toric variety at all.

A pair-of-pants decomposition
of the punctured Riemann surface
$X \setminus \{ z_1, \dots, z_n \}$
is described by a trivalent graph $\Gamma$ with $n$ leaves,
in such a way that nodes correspond to pairs of pants and
edges show how they are glued together
as shown in \pref{fg:decomp}.
In this paper,
we consider the case when the genus of $X$ is zero,
so that $\Gamma$ is a tree.
The corresponding Goldman system will be denoted by
$\Theta_\Gamma : \scN_\bsalpha \to \bR^{n-3}$.

\begin{figure}
\centering
\input{decomp.pst}
\caption{A pair-of-pants decomposition
and its dual graph}
\label{fg:decomp}
\end{figure}

The moduli space $\scN_\bsalpha$ is closely related
to the moduli space
$\scM_\bsw$ of ordered $n$ points
on the projective line,
which is constructed
as the geometric invariant theory quotient;
$$
 \scM_\bsw
  = \Proj \lb
      \bigoplus_{k=0}^\infty
        \Gamma((\bP^1)^n, \scO(k w_1, \ldots, k w_n))^{\PGL_2}
              \rb.
$$
Here $\bsw = (w_1, \ldots, w_n) \in \bQ^n$ is the parameter
for the $\PGL_2$-linearization,
which determines the stability condition
and the ample line bundle on the quotient.

The moduli space $\scM_\bsw$ has
a natural symplectic structure
as a polarized projective variety.
As such, it admits an interpretation
as the moduli space of polygons in $\bR^3$
with side lengths $(w_1, \ldots, w_n)$.
Fix a convex planar $n$-gon $P$
called the {\em reference polygon}.
We identify the set of triangulations of the reference polygon
with the set of trivalent trees with $n$ leaves,
by assigning the dual graph to a triangulation.
For any triangulation $\Gamma$ of the reference polygon,
Klyachko \cite{Klyachko_SP} and
Kapovich and Millson \cite{Kapovich-Millson_SGPES}
introduced a completely integrable system
$$
 \Phi_\Gamma : \scM_\bsw \to \bR^{n-3}
$$
called the {\em bending system}.

To relate a completely integrable system
with a toric variety,
the notion of a {\em toric degeneration of an integrable system} is introduced
in \cite[Definition 1.1]{Nishinou-Nohara-Ueda_TDGCSPF}.
For each triangulation $\Gamma$
of the reference polygon $P$,
we have given a toric degeneration
of the corresponding bending system
in \cite[Corollary 1.3]{MR3211821}.
The toric degeneration of $\scM_\bsw$
underlying this toric degeneration of the bending system
is the one given in 
\cite{Hausmann-Knutson_PSG,
Kamiyama-Yoshida,
Speyer-Sturmfels_TG,
Foth-Hu,
Howard-Manon-Millson}.

The main result in this paper is the following:

\begin{theorem} \label{th:main}
Let
$
 \bsalpha
  = (\alpha_1, \ldots, \alpha_n)
  \in (0,1/2)^n
$
be a parabolic weight satisfying
$
 | \bsalpha |
  := \alpha_1 + \dots + \alpha_n
  < 1.
$
Then for any triangulation $\Gamma$
of the reference polygon $P$, 
there is a symplectomorphism
$\psi : \scN_\bsalpha \to \scM_\bsw$
such that $\psi^*\Phi_{\Gamma} = \Theta_{\Gamma}$.
\end{theorem}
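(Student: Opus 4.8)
The plan is to build $\psi$ directly from the two integrable structures, exploiting the fact that $\Theta_\Gamma$ and $\Phi_\Gamma$ are Lagrangian $T^{n-3}$-fibrations of the same combinatorial type over polytopes in $\bR^{n-3}$, and that the hypothesis $|\bsalpha|<1$ forces the two moment polytopes to coincide. Throughout I match parameters by $w_i = 2\alpha_i$, normalised so that the holonomy around $z_i$ is a rotation by angle $2\pi\alpha_i$; with this normalisation the side length $w_i$ equals the conjugacy-class parameter of the Mehta--Seshadri correspondence \cite{Mehta-Seshadri}. I would first record the complex-geometric input: for $|\bsalpha|<1$ every semistable parabolic bundle has balanced underlying bundle $\scO\oplus\scO$, so that $\scN_\bsalpha$ is the GIT quotient of $(\bP^1)^n$ by $\PGL_2$ with linearisation $\bsw$, and hence agrees with $\scM_\bsw$ as a complex manifold. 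This is not used to define $\psi$ (which need not be holomorphic), but it identifies the smooth loci over which the two fibrations live and controls their degeneration strata.

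Next I would present both sides as Hamiltonian $T^{n-3}$-spaces over the open dense locus. For each internal edge $e$ of $\Gamma$, the Goldman Hamiltonian $\theta_e$ is the normalised angle of the holonomy $A_e = \prod_{i\in S_e}A_i$ around the curve dual to $e$, where $S_e$ is the set of legs on one side of $e$; its flow is Goldman's twist flow and is $2\pi$-periodic on the locus where $A_e\ne\pm I$ \cite{Goldman_IFLG}. On the polygon side, the bending Hamiltonian $\ell_e = |\mu_e|$ with $\mu_e=\sum_{i\in S_e}\xi_i$ is the length of the diagonal dual to $e$, and its bending flow is the $2\pi$-periodic rotation about that diagonal \cite{Kapovich-Millson_SGPES}. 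Thus $\Theta_\Gamma=(\theta_e)_e$ and $\Phi_\Gamma=(\ell_e)_e$ each realise their Hamiltonians as action variables of a Lagrangian $T^{n-3}$-fibration.

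The crux is that the two systems have the same image and that this image together with the action variables determines the spaces symplectically. Both images are cut out node-by-node: at each trivalent node with incident parameters $a,b,c$ the bending image imposes the Euclidean triangle inequalities $|a-b|\le c\le a+b$, while the Goldman image imposes in addition the spherical constraint $a+b+c\le 2$, coming from the condition that three $SU(2)$ conjugacy classes with angle parameters $a,b,c$ contain a triple whose product is $I$. This is where the hypothesis enters essentially: when $|\bsalpha|<1$ all leg angles, and hence all attainable edge angles, are small enough that every constraint $a+b+c\le 2$ is strictly inactive, so the Goldman polytope coincides with the bending polytope $\Delta_\Gamma\subset\bR^{n-3}$. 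Over the interior $\Delta_\Gamma^{\circ}$ both fibrations are principal $T^{n-3}$-bundles; since $\Delta_\Gamma^{\circ}$ is convex, hence contractible, there is no monodromy, global action--angle coordinates exist, and the symplectic form is $\sum_e d\theta_e\wedge d\varphi_e$ on one side and $\sum_e d\ell_e\wedge d\varphi'_e$ on the other. Matching the action variables $\theta_e=\ell_e$ and choosing compatible angle origins produces a $T^{n-3}$-equivariant symplectomorphism $\psi^{\circ}$ over $\Delta_\Gamma^{\circ}$ with $(\psi^{\circ})^*\Phi_\Gamma=\Theta_\Gamma$.

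The main obstacle is to extend $\psi^{\circ}$ across the boundary strata, where the tori degenerate, to a global symplectomorphism $\psi:\scN_\bsalpha\to\scM_\bsw$, and to verify smoothness there. The key point is that, thanks to $|\bsalpha|<1$, the two fibrations degenerate in exactly the same way: a Goldman circle collapses precisely when $A_e\to\pm I$, i.e. when $\theta_e$ reaches a wall $|a-b|$ or $a+b$ of $\Delta_\Gamma$, which is the same wall at which the diagonal $\mu_e$ aligns with its neighbours and the bending circle collapses. To pin down the transverse symplectic geometry near each stratum I would compare both systems with the common toric degeneration underlying the bending system \cite{MR3211821}, whose central fibre is the toric variety of $\Delta_\Gamma$; this furnishes matching local normal forms and shows that $\psi^{\circ}$ extends continuously, and then smoothly. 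Since the complement of $\Delta_\Gamma^{\circ}$ has real codimension at least two in each space, a continuous $\psi$ that is a symplectomorphism on the dense locus is a symplectomorphism everywhere, and the identity $\psi^*\Phi_\Gamma=\Theta_\Gamma$ extends by continuity, completing the proof.
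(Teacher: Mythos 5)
Your preparatory steps are fine: for $|\bsalpha|<1$ every semistable parabolic bundle indeed has underlying bundle $\scO\oplus\scO$, the Goldman polytope coincides with the bending polytope $\Delta_\Gamma(\bsalpha)$ exactly because the spherical constraints become inactive, and over the interior $\Delta_\Gamma^{\circ}$ the Duistermaat argument (contractible base, hence no monodromy and no Chern-class obstruction) does produce a fibre-preserving symplectomorphism $\psi^{\circ}$ matching the action variables. The genuine gap is the step you yourself flag as ``the main obstacle'': the extension of $\psi^{\circ}$ across the degenerate fibres. First, the mechanism you propose cannot supply the needed local normal forms. The toric degeneration of \cite{MR3211821} is a \emph{degeneration}, not a deformation: its comparison maps are symplectomorphisms only on open dense subsets (this is stated explicitly in the corollary in the introduction of this paper), and they fail to be smooth precisely at the fibres you need to control; moreover, over strata where some $\varphi_{d}=0$ (which do occur inside $\Delta_\Gamma(\bsalpha)$, e.g.\ for equilateral weights) the Hamiltonians $\varphi_d$ and $\vartheta_C$ are not even smooth, so no toric or elliptic model is available there at all. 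Asserting that the two fibrations ``degenerate in exactly the same way'' is not an input one can invoke; it is essentially the statement being proved. Second, your closing reduction is logically backwards: a \emph{continuous} extension that is a symplectomorphism on a dense open set is not automatically a symplectomorphism, because continuity does not give smoothness at the bad locus, and codimension $\ge 2$ of that locus does not repair this (normal forms of singular fibrations furnish standard counterexamples to such removable-singularity claims for maps).

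It is instructive to contrast this with how the paper handles the same difficulty. The paper never tries to extend a fibrewise-defined map; instead it first constructs a \emph{globally smooth} diffeomorphism and only afterwards verifies it is symplectic by a density argument (which is legitimate in that order). Concretely, it deforms the weight along the ray $t\bsalpha$, $t\in(0,1]$ (no walls are crossed since $|\bsalpha|<1$), realizes the whole family as a quasi-Hamiltonian reduction of extended moduli spaces, and takes the horizontal lift of the trivialization of the family of conjugacy classes with respect to an invariant metric on the stratified space $\frakN^G(\coprod_i\RS_i)$; this gives global diffeomorphisms $\psi_t$ intertwining the Goldman systems (Theorem \ref{thm:isom1}). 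The limit $t\to 0$ is then identified with the polygon space via Jeffrey's Marle--Guillemin--Sternberg normal form (Proposition \ref{prop:Jeffrey_EMS}), under which $\frac1t\vartheta_{t\bsalpha,C_k}\to\varphi_{d_k}$ and $\frac1t\omega_{\scN_{t\bsalpha}}\to\omega_{\scM_{\bsalpha}}$. To rescue your approach you would need an independent uniqueness/rigidity theorem for proper Lagrangian fibrations with the specific (partly non-toric, non-smooth) singularities occurring here, and no such result is cited or proved in your proposal.
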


Combining with \cite[Corollary 1.3]{MR3211821},
we have the following.

\begin{corollary}
Suppose that 
$
 \bsalpha
  \in (0,1/2)^n
$
satisfies
$
 | \bsalpha |< 1.
$
Then there exists a continuous family $\pi : \frakY \to [0,1]$ of
symplectic varieties equipped with completely integrable systems
$F_t :  Y_t = \pi^{-1}(t) \to \bR^{n-3}$ such that 
$(Y_1, F_1) = (\scN_{\bsalpha}, \Theta_{\Gamma})$,
and $(Y_0, F_0)$ is a pair of a toric variety and a toric moment map
whose moment polytope is 
$\Theta_{\Gamma}(\scN_{\bsalpha})$.
Moreover, there is a continuous family of maps $\psi_t : Y_1 \to Y_{1-t}$
which are symplectomorphisms on an open dense subset and satisfy
$\psi^*_t F_{1-t} = F_1 = \Theta_{\Gamma}$.
\end{corollary}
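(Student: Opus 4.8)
The plan is to deduce the corollary from \pref{th:main} and \cite[Corollary 1.3]{MR3211821} by transporting the toric degeneration of the bending system to the Goldman system along the symplectomorphism $\psi$.

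First, \pref{th:main} supplies a symplectomorphism $\psi : \scN_{\bsalpha} \to \scM_{\bsw}$ with $\psi^{*}\Phi_{\Gamma} = \Theta_{\Gamma}$, so that $(\scN_{\bsalpha}, \Theta_{\Gamma})$ and $(\scM_{\bsw}, \Phi_{\Gamma})$ coincide as completely integrable systems. On the other side, \cite[Corollary 1.3]{MR3211821} furnishes a toric degeneration of the bending system: a continuous family over $[0,1]$ with fibres $W_{t}$ carrying integrable systems $G_{t} : W_{t} \to \bR^{n-3}$ such that $(W_{1}, G_{1}) = (\scM_{\bsw}, \Phi_{\Gamma})$, such that $(W_{0}, G_{0})$ is a toric variety with toric moment map and moment polytope $\Phi_{\Gamma}(\scM_{\bsw})$, together with a continuous family of maps $\phi_{t} : W_{1} \to W_{1-t}$ that are symplectomorphisms on an open dense subset and satisfy $\phi_{t}^{*} G_{1-t} = G_{1} = \Phi_{\Gamma}$.

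Next I would assemble the required family $\pi : \frakY \to [0,1]$ out of this degeneration by using $\psi$ to relabel only the fibre over $t = 1$: put $Y_{t} = W_{t}$ and $F_{t} = G_{t}$ for $t < 1$, identify the fibre over $t = 1$ with $\scN_{\bsalpha}$ through $\psi^{-1}$, and set $F_{1} = \Theta_{\Gamma}$. Since $\psi$ is an isomorphism in both the complex-analytic and the symplectic categories, this relabelling yields a genuine continuous family whose special fibre $(Y_{0}, F_{0}) = (W_{0}, G_{0})$ is unchanged, and its moment polytope is $\Phi_{\Gamma}(\scM_{\bsw}) = \Phi_{\Gamma}(\psi(\scN_{\bsalpha})) = \Theta_{\Gamma}(\scN_{\bsalpha})$, the last equality using $\Phi_{\Gamma} \circ \psi = \Theta_{\Gamma}$ and the surjectivity of $\psi$. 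I would then take the comparison maps to be $\psi_{t} = \phi_{t} \circ \psi : Y_{1} = \scN_{\bsalpha} \to W_{1-t} = Y_{1-t}$; each is a symplectomorphism on an open dense subset, being the composite of the global symplectomorphism $\psi$ with the open-dense symplectomorphism $\phi_{t}$, and it satisfies $\psi_{t}^{*} F_{1-t} = \psi^{*}\phi_{t}^{*} G_{1-t} = \psi^{*}\Phi_{\Gamma} = \Theta_{\Gamma} = F_{1}$, as required.

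The argument carries essentially no independent difficulty, since the substantive content is divided between \pref{th:main} and \cite[Corollary 1.3]{MR3211821}. The one point that deserves a moment's care is the consistency of the endpoint relabelling: one must check that replacing the $t = 1$ fibre by $\scN_{\bsalpha}$ through the biholomorphism $\psi^{-1}$ leaves the flat family and its gradient--Hamiltonian comparison maps intact, and in particular that $\psi_{0}$ becomes the identity of $\scN_{\bsalpha}$ once this identification is taken into account. Both are immediate from \pref{th:main}, which makes $\psi$ an isomorphism in the complex-analytic and symplectic categories simultaneously, so that no structure is lost in the transport.
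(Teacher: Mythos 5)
Your proposal is correct and is exactly the argument the paper intends: the paper offers no separate proof beyond the phrase ``Combining with \cite[Corollary 1.3]{MR3211821}'', i.e., transport the toric degeneration of the bending system through the symplectomorphism $\psi$ of Theorem \ref{th:main}, which is precisely what you do. Your relabelling of the $t=1$ fibre and the definition $\psi_t = \phi_t \circ \psi$ fill in the routine details consistently with that intent.
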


As a corollary,
we obtain a new proof
of the $|\bsalpha|<1$ case
of the result of Jeffrey and Weitsman
\cite{Jeffrey-Weitsman_BSO}
that the numbers of lattice points
on the moment polytope of the Goldman system
is equal to the number of sections
of the natural ample line bundle
on $\scN_\bsalpha$
provided by GIT construction.


This paper is organized as follows:
In Section \ref{sc:weighted_projective_lines},
we recall the description of coherent sheaves
on smooth rational orbifold curves
due to Geigle and Lenzing
\cite{Geigle-Lenzing_WPC},
who call such curves \emph{weighted projective lines}.
In Section \ref{sc:quasi-parabolic_bundles},
we recall the relation between quasi-parabolic bundles
and orbifold bundles.
%
In Section \ref{sc:parabolic_weights},
we recall the definition of parabolic weights and
stability conditions.
In \pref{sc:Mehta-Seshadri},
we recall the relation
between flat $SU(2)$-bundles and
parabolic bundles of rank two and parabolic degree zero.
In Section \ref{sc:distinguished_parameter},
we show that the moduli space $\scN_\bsalpha$ is
the projective space $\bP^{n-3}$
for a suitable choice of a stability parameter.
In Section \ref{sc:wall-crossing_parabolic1},
we recall wall-crossing phenomena
in moduli spaces of parabolic bundles
following Bauer \cite{Bauer_PB};
the space of stability parameters is divided
into finitely many chambers by walls,
and the change in moduli spaces
under wall-crossing can be described
explicitly as a blow-down followed by a blow-up.
More general results
on variation of geometric invariant theory quotients
are obtained by Thaddeus \cite{Thaddeus_flip} and
 Dolgachev and Hu \cite{Dolgachev-Hu}.
In Section \ref{sc:wall-crossing_parabolic2},
we use the wall-crossing phenomena
to give an explicit description of $\scN_\bsalpha$
for general $\bsalpha$.
The strategy is to start with the stability parameter
in Section \ref{sc:distinguished_parameter} and
successively cross walls in the space of stability parameters
to arrive at any stability parameter.
This is strategy is already used in Bauer \cite{Bauer_PB},
and the main difference between his work and ours is that
we make an extensive use
of the language of weighted projective lines,
developed by Geigle and Lenzing
\cite{Geigle-Lenzing_WPC},
and the chamber that we start with is different
from that of Bauer.
In Section \ref{sc:comparison},
we give a description of the moduli space $\scM_\bsw$
parallel to that of $\scN_\bsalpha$.
This immediately shows that
$\scM_\bsw$ and $\scN_\bsalpha$ are isomorphic
if $\bsw = \bsalpha$ and $|\bsalpha|<1$.
In Section \ref{sc:bending}, we recall the construction of the bending system 
on $\scM_{\bsw}$.
In Section \ref{sc:Goldman_system},
we recall the description of the symplectic structure given by
Guruprasad, Huebschmann, Jeffrey, and Weinstein
\cite{Guruprasad-Huebschmann-Jeffrey-Weinstein}, and the Goldman system.
In Section \ref{sc:extended_moduli} we recall extended moduli spaces
defined by Jeffrey \cite{Jeffrey_EMS} and
Hurtubise and Jeffrey \cite{Hurtubise-Jeffrey_RWFFPB}
to construct $\scN_{\bsalpha}$ as a finite dimensional 
symplectic reduction, and as a quasi-Hamiltonian reduction
\cite{Alekseev-Malkin-Meinrenken_LGVMM}.
In Section \ref{sc:wall_extended_moduli}, we see the walls 
in Section \ref{sc:wall-crossing_parabolic1} from the view point of
quasi-Hamiltonian reduction.
In Section \ref{sc:gluing}, we study the Goldman system via gluing 
of Riemann surface, following the idea of \cite{Hurtubise-Jeffrey_RWFFPB}
and \cite{Alekseev-Malkin-Meinrenken_LGVMM}.
In Section \ref{sc:isom_Goldman}, we construct a symplectomorphism
between $\scN_{\bsalpha}$ and $\scN_{t \bsalpha}$ $(0<t \le 1)$
which identifies the Goldman systems  
in the case where $|\bsalpha|<1$.
In Section \ref{sc:Goldman-bending}, we show that $\scM_\bsalpha$ and $\scN_\bsalpha$
are symplectomorphic in such a way that 
the Goldman system on $\scN_\bsalpha$
and the bending system on $\scM_\bsw$ are identified
for sufficiently small $\bsalpha$.
Combining with the result in Section \ref{sc:isom_Goldman}, 
Theorem \ref{th:main} is proved.

\emph{Acknowledgment}:
We thank Shinnosuke Okawa
for valuable discussions,
and the anonymous referee for suggesting improvements.
A part of this paper is written
while K.~U. is visiting Korea Institute
for Advanced Study,
whose hospitality and financial support
is gratefully acknowledged.
This research is supported by
Grant-in-Aid for Young Scientists
(No.23740055 and No.24740043).


 
\section{Orbifold projective lines}
 \label{sc:weighted_projective_lines}

Let $\bX$ be a smooth Deligne-Mumford stack of dimension one
without generic stabilizer.
We assume that $\bX$ is rational,
so that the coarse moduli space $X$ of $\bX$ is isomorphic to $\bP^1$.
Such a stack is studied in detail by Geigle and Lenzing \cite{Geigle-Lenzing_WPC}
under the name {\em weighted projective lines}, and
we summarize some of their results in this subsection.
One can also see e.g. \cite{MR2931898} and references therein
for more on this subject.
Orbifold points of $\bX$ will be denoted by $w_1, \dots, w_n$,
and their images in $X$ will be denoted by $z_1, \dots, z_n$.
The absence of generic stabilizer implies that
the stabilizer group $\Gamma_{p_i}$
at $w_i$ for any $i = 1, \dots, n$ is a cyclic group,
whose order will be denoted by $p_i$.

Locally around the orbifold point $w_i$,
we can take an orbifold chart
$
 [\bA / \Gamma_{w_i}] \hookrightarrow \bX
$
where $\bA = \Spec \bC[u]$ is an affine space and
$\Gamma_{w_i}$ acts linearly by a primitive $p_i$-th root of unity.
Following \cite{Geigle-Lenzing_WPC},
we let $\scO_\bX(\vecx_i)$ for $i = 1, \dots, n$,
denote the dual of $\scO(-\vecx_i)$,
defined as the kernel of the natural morphism
$
 \scO_\bX \to \scO_{w_i}
$
to the skyscraper sheaf
$
 \scO_{w_i}
  = \big[ \big( \Spec \bC[u]/(u) \big) \big/ \Gamma_{w_i} \big];
$
$$
 0 \to \scO_\bX(-\vecx_i) \to \scO_\bX \to \scO_{w_i} \to 0.
$$
We also define $\scO_\bX(\vecc)$ as the line bundle $\scO_\bX(x)$,
which does not depend
on the choice of a point $x \in \bX \setminus \{ w_1, \dots, w_n \}$.
One has relations
$$
 \scO_\bX(p_i \vecx_i) = \scO_\bX(\vecc), \quad i = 1, \dots, n,
$$
and the Picard group of $\bX$ is given by
$$
 L = \Pic \bX
  = \bZ \vecx_1 \oplus \dots \oplus \bZ \vecx_n \oplus \bZ \vecc
      / (p_1 \vecx_1 - \vecc, \dots, p_n \vecx_n - \vecc).
$$
Choose a global coordinate on $X \cong \bP^1$ so that
the points $z_1, \dots, z_n$ on $X$ are given in this coordinate
by
$\lambda_1 = \infty$,
$\lambda_2 = 0$ and
$\lambda_3, \dots, \lambda_n \in \bA^1 \setminus \{ 0 \}$.
The total coordinate ring of $\bX$ is given by
\begin{align*}
 S
  = \bigoplus_{\veck \in L} H^0(\scO_\bX(\veck))
  = k[X_0, X_1, \dots, X_n] \, \big/
       \lb X_i^{p_i} - X_2^{p_2} + \lambda_i X_1^{p_1} \rb_{i=2}^n,
\end{align*}
which is graded by the abelian group $L$ as
$
 \deg X_i = \vecx_i
$
for
$
 i = 1, \dots, n.
$
The stack $\bX$ is recovered as the quotient stack
$$
 \bX = \big[ (\Spec S \setminus \{ \bszero \}) \big/ G \big]
$$
by the affine algebraic group
$
 G = \Spec \bC[L].
$
The graded ring $S$ is Gorenstein with parameter
$
 \vecomega = (n-2)-\sum_{i=1}^n \vecx_i,
$
and Serre duality on $\bX$ is given by
$$
 \Ext^1(\scE, \scF)
  \cong H^0(\scF, \scE \otimes \scO_\bX(\vecomega))^\vee
$$
for any coherent sheaves $\scE$ and $\scF$.

\section{Quasi-parabolic bundles as orbifold bundles}
 \label{sc:quasi-parabolic_bundles}

In this section,
we recall the relation between quasi-parabolic bundles on punctured curves
and orbifold bundles on orbi-curves.
Although this is well-known to experts
and essentially goes back to \cite{Mehta-Seshadri},
we provide a sketch of proof here for the readers' convenience.

Let $\Utilde = \Spec \bC[u]$ be an affine line and
$\bU = [\Utilde / \Gamma]$ be the quotient stack of $\Utilde$
with respect to the $\Gamma = \bZ / p_j \bZ$-action,
which acts on points of $\Utilde$ as
\begin{align} \label{eq:Gamma_action}
 u \mapsto \zeta^{-1} u, \quad
 \zeta = \exp(2 \pi \sqrt{-1} / p_j).
\end{align}
A complex analytic neighborhood of the origin in $\bU$ is identified
with a complex analytic neighborhood of $w_j$ in $\bX$.
The coarse moduli space of $\bU$ is given by
$
 U = \Spec \bC[v],
$
where $\bC[v] = \bC[u]^\Gamma$
for $v = u^{p_j}$ is the invariant ring.

The action of $\Gamma$ on $\Utilde$
induces an action on the coordinate ring $\bC[u]$
in such a way that an element $\gamma \in \Gamma$
sends a function $f$
to its pull-back $(\gamma^{-1})^* f$ by $\gamma^{-1} \colon \Utilde \to \Utilde$.
It follows from the definition of sheaves on quotient stacks
that a locally-free sheaf $\scE$ on $\bU$ corresponds to
a $\Gamma$-equivariant locally-free sheaf on $\Utilde$.
Since $\Utilde$ is affine,
a $\Gamma$-equivariant locally-free sheaf on $\Utilde = \Spec \bC[u]$
is the same as a free $\bC[u]$-module $M$,
equipped with an action of $\Gamma$
satisfying
\begin{align} \label{eq:equiv1}
 \gamma \cdot (f m) = (\gamma \cdot f) (\gamma \cdot m),
\end{align}
for any $\gamma \in \Gamma$, $f \in \bC[u]$ and $m \in M$.
Here $\cdot$ is the $\Gamma$-action on $\bC[u]$ and $M$.
The crossed product algebra $\bC[u] \rtimes \Gamma$
consists of elements of the form $f \otimes \gamma$
for $f \in \bC[u]$ and $\gamma \in \Gamma$,
with relations
\begin{align} \label{eq:equiv2}
 (f \otimes \gamma) \circ (g \otimes \delta)
  = f (\gamma \cdot g) \otimes (\gamma \delta).
\end{align}
It follows from \eqref{eq:equiv1} and \eqref{eq:equiv2}
that a $\Gamma$-equivariant $\bC[u]$-module can be identified
with a $\bC[u] \rtimes \Gamma$-module.

Let $P$ be a finitely-generated $\bC[u] \rtimes \Gamma$-module.
As a $\Gamma$-module,
it has a direct sum decomposition
$
 P = \bigoplus_{i=1}^{p_j} P_i 
$
into isotypical components,
where the generator $[1] \in \Gamma$ acts on $P_i$
by multiplication by
$\exp(2 \pi \sqrt{-1} (i-1) / p_j)$.
The $\bC[u]$-module structure is determined
by the action of $u$,
which is just a collection of $\bC$-linear maps
$$
 u : P_i \to P_{i-1}, \qquad
 i \in \bZ / p_j \bZ.
$$
Each $P_i$ is a $\bC[v]$-module,
and multiplication by $u$ is a homomorphism
of $\bC[v]$-modules,
which must satisfy
$$
 u^m = v : P_i \to P_{i-m}.
$$
In terms of sheaves $\scP_i$
of $\scO_U$-modules
associated with $\bC[v]$-modules $P_i$,
this gives a {\em quasi-parabolic sheaf},
defined as an infinite sequence
\begin{equation} \label{eq:parabolic_str}
 \cdots
  \xto{u} \scP_i
  \xto{u} \scP_{i+1}
  \xto{u} \cdots
\end{equation}
such that $\scP_{i+p_j} = \scP_i(-z_j)$ and
the composition
$$
 \scP_{i+p_j} \xto{u^{p_j}} \scP_i
$$
is equal to the multiplication
$$
 \scP_i(-z_j) \xto{v} \scP_i
$$
by $v$ for any $i \in \bZ$.
A \emph{morphism} of quasi-parabolic sheaves
is a collection
$f_i \colon \cP_i \to \cQ_i$ of morphisms
making the diagram
\begin{align*}
\begin{CD}
 \cdots @>{u}>> \cP_i @>{u}>> \cP_{i+1} @>{u}>> \cdots \\
  @. @V{f_i}VV @V{f_{i+1}}VV \\
 \cdots @>{u}>> \cQ_i @>{u}>> \cQ_{i+1} @>{u}>> \cdots \\
\end{CD}
\end{align*}
commutative.
Under the correspondence
between $\bC[v]$-modules with quasi-parabolic structures
and $\bC[u] \rtimes \Gamma$-modules,
a morphism of quasi-parabolic sheaves
can be identified with a morphism of $\bC[u] \rtimes \Gamma$-modules.
By using this correspondence around each orbifold points,
one obtains the following:

\begin{proposition} \label{pr:q-par}
The category of quasi-parabolic sheaves on $X$
is equivalent to the category of coherent sheaves on $\bX$.
\end{proposition}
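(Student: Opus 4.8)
The plan is to upgrade the local equivalence just established, valid around a single orbifold point, to the global statement by descent. The essential observation is that both sides are governed by local data for an étale (or analytic) cover of $\bX$: away from the orbifold points the stack $\bX$ is isomorphic to its coarse space $X$, so a coherent sheaf on $\bX$ restricts on $\Xo = X \setminus \{z_1, \dots, z_n\}$ to an ordinary coherent sheaf, and there the quasi-parabolic structure \eqref{eq:parabolic_str} is forced to be the constant sequence. It therefore suffices to cover $\bX$ by $\Xo$ together with the orbifold charts $\bU_j = [\Utilde_j / \Gamma_{w_j}]$ around each $w_j$, to apply the dictionary between $\bC[u] \rtimes \Gamma_{w_j}$-modules and quasi-parabolic sheaves on each $\bU_j$ and the identity functor on $\Xo$, and to check that the outcomes glue.

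First I would make the two functors explicit. Writing $\pi \colon \bX \to X$ for the map to the coarse moduli space, I attach to a coherent sheaf $\scE$ on $\bX$, near each $w_j$, the sheaves
$$
 \scP_i \ = \ \pi_* \big( \scE \otimes \scO_\bX(i \vecx_j) \big),
 \qquad i \in \bZ,
$$
the maps between consecutive terms being induced by the tautological section of $\scO_\bX(\vecx_j)$ vanishing at $w_j$. The relation $\scO_\bX(p_j \vecx_j) = \scO_\bX(\vecc)$ together with the projection formula then produces the periodicity and the factorization of the $p_j$-fold composite through multiplication by the local coordinate, i.e.\ exactly the data \eqref{eq:parabolic_str}; restricted to $\bU_j$ this recovers the isotypical decomposition of the associated $\bC[u] \rtimes \Gamma_{w_j}$-module, so it reproduces the local equivalence already proved. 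In the reverse direction a quasi-parabolic sheaf yields, on each chart $\Utilde_j$, a $\Gamma_{w_j}$-equivariant sheaf, and these assemble with its restriction to $\Xo$ to a coherent sheaf on $\bX$ by descent along the cover.

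It then remains to see that these assignments are mutually quasi-inverse functors, and this is where the only genuine content lies. Functoriality and full faithfulness reduce to the corresponding local statements, since a morphism is determined by its restrictions to the members of the cover and these are matched by the local equivalence on the $\bU_j$ and by the identity on $\Xo$. The one point requiring care is the compatibility of the two descriptions on the overlaps $\bU_j \cap \Xo$: there the $\Gamma_{w_j}$-action is free, so the equivariant sheaf is simply pulled back from the coarse space and the quasi-parabolic sequence degenerates to the constant one attached to that sheaf. Hence both identifications restrict to the identity on overlaps, the cocycle condition is automatic, and the functors descend to the asserted equivalence of categories.

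I expect the most delicate step to be essential surjectivity, namely showing that an arbitrary quasi-parabolic sheaf comes from an honest coherent sheaf on $\bX$ rather than a mere formal gluing datum. This, however, is exactly what the local classification of finitely generated $\bC[u] \rtimes \Gamma_{w_j}$-modules supplies at each puncture, and combined with étale descent for coherent sheaves on $\bX$ it closes the argument.
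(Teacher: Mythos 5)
Your proof is correct and takes essentially the same route as the paper: the paper's own argument consists of the local dictionary between coherent sheaves on $[\Utilde/\Gamma_{w_j}]$, $\bC[u]\rtimes\Gamma_{w_j}$-modules, and quasi-parabolic data, followed by the assertion that applying this correspondence around each orbifold point yields the global equivalence, which is exactly what your descent argument and explicit pushforward formula spell out. The only slip is a sign convention: with $\scP_i = \pi_*\lb \scE \otimes \scO_\bX(i\vecx_j)\rb$ the projection formula gives $\scP_{i+p_j} \cong \scP_i(z_j)$, so to match the paper's normalization $\scP_{i+p_j} = \scP_i(-z_j)$ you should twist by $\scO_\bX(-i\vecx_j)$ instead.
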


If $P$ is locally-free,
then multiplication by $v$ is an injection,
so that \eqref{eq:parabolic_str} gives a filtration
$$
 \scP_1(-z_j) \cong \scP_{p_j+1}
  \hookrightarrow \scP_{p_j}
  \hookrightarrow \cdots
  \hookrightarrow \scP_2
  \hookrightarrow \scP_1
$$
of sheaves,
which in turn gives a filtration
$$
 0
 = F_{{p_j}+1}(\scP_{z_j})
  \subset F_{p_j}(\scP_{z_j})
  \subset \cdots
  \subset F_1(\scP_{z_j})
  = \scP_{z_j}
$$
of the fiber $\scP_{z_j} = \scP_1 / v \cdot \scP_1$
of $\scP_1$ at $z_j$.
A pair consisting of a locally-free sheaf and
a filtration at each $z_j$ is called
a {\em quasi-parabolic bundle}.
A morphism of quasi-parabolic bundles $\scP$ and $\scQ$
can be described, in terms of a filtration at each $z_j$,
as a morphism $\varphi$ of the underlying vector bundle
such that $\varphi(F_i(\scP_{z_j})) \subset F_i(\scQ_{z_j})$.
The equivalence in Proposition \ref{pr:q-par} restricts
to an equivalence
between the category of vector bundles on $\bX$ and
the category of quasi-parabolic bundles on $X$.

\section{Parabolic weights and stability conditions}
 \label{sc:parabolic_weights}

Assume that the stabilizer groups
at all orbifold points are cyclic groups
of order two;
$\Gamma_{w_j} = \bZ / 2 \bZ$
for $j = 1, \dots, n$.
A vector bundle on $\bX$ corresponds
to a quasi-parabolic bundle
consisting vector bundle $\scP$ on $X$
and 2-step flags
$$
 0 = F_3(\scP_{z_j})
 \subset F_2(\scP_{z_j})
 \subset F_1(\scP_{z_j}) = \scP_{z_j}
$$
for each $j=1,\ldots,n$.
The Picard group of $\bX$ is given by
$$
 L = \Pic \bX
  = \bZ \vecx_1 \oplus \dots \oplus \bZ \vecx_n \oplus \bZ \vecc
      / (2 \vecx_1 - \vecc, \dots, 2 \vecx_n - \vecc).
$$
The structure sheaf $\scO_\bX$ corresponds
to the trivial bundle $\scP = \scO_X$
equipped with the filtration
$F_2(\scP_{z_j}) = 0$
for any $z_j$.
The line bundle $\scO_\bX(\vecx_i)$ corresponds
to the trivial bundle $\scP = \scO_X$
equipped with the filtration
\begin{align*}
 F_2(\scP_{z_j}) = 
  \begin{cases}
   \scP_{z_j} & i = j, \\
   0 & \text{otherwise}.
  \end{cases}
\end{align*}
A parabolic bundle is a quasi-parabolic bundle
together with a choice of parabolic weights
$$
 (a_{j, 1}, a_{j, 2}) \in \bQ^2, \qquad
 0 \le a_{j, 1} < a_{j,2} < 1
$$
for each $j=1,\ldots,n$.
We always assume that
a parabolic weight satisfies
$a_{j,1} + a_{j,2} = 1$
for $j = 1, \dots, n$
in this paper.
Any subbundle $\scE$ of a parabolic bundle $\scP$ has
a natural parabolic structure
whose quasi-parabolic structure is given by
$$
 F_i(\scE_{z_j}) = F_i(\scP_{z_j}) \cap \scE_{z_j}
$$
with the same parabolic weight as $\scP$.
The {\em parabolic degree} of $\scP$ is defined by
$$
 \pardeg \scP = \deg \scP + \sum_{j=1}^n \left[
  a_{j,1} (\dim F_1(\scP_{z_j}) - \dim F_2(\scP_{z_j}))
   + a_{j,2} \dim F_2(\scP_{z_j}) \right].
$$
For example,
if $\rank \scP = 2$ and
$$
 \dim F_1(\scP_{z_j}) - \dim F_2(\scP_{z_j})
  = \dim F_2(\scP_{z_j})
  = 1, \qquad
 j = 1, \ldots, n,
$$
then the parabolic degree of $\scP$ is given by
$$
 \pardeg \scP
 = \deg \scP + \sum_{j=1}^n (a_{j,1} + a_{j,2})
 = \deg \scP + n.
$$
A parabolic bundle is {\em semi-stable}
if one has
\begin{align} \label{eq:slope}
 \frac{\pardeg \scE}{\rank \scE}
  \le \frac{\pardeg \scP}{\rank \scP}
\end{align}
for any subbundle $\scE \subset \scP$.
It is {\em stable}
if the strict inequality holds in \eqref{eq:slope}
for any non-trivial subbundle $0 \ne \scE \subsetneq \scP$.

The Picard group $L$ of $\bX$ acts on $\bQ^n$ by
$$
 \vecx_i(\bsalpha) = \bsalpha', \quad
 \alpha_j' = 
  \begin{cases}
   \alpha_j & i \ne j, \\
   - \alpha_j & i = j.
  \end{cases}
$$
Note that this action factors through
$L / (2 \vecc) \cong (\bZ / 2 \bZ)^n$.
Any element of $L$ can be written uniquely as
$
 \veck
  = k_1 \vecx_1 + \dots + k_n \vecx_n + k_0 \vecc,
$
where $k_i \in \{ 0, 1 \}$ for $i = 1, \dots, n$ and $k_0 \in \bZ$,
and the parabolic degree of the line bundle $\scO(\veck)$
is given by
$$
 \pardeg_\bsalpha \scO(\veck)
  = \left| \veck \right| + \left| \veck(\bsalpha) \right|
  := k_0 + k_1 + \dots + k_n 
       + (-1)^{k_1} \alpha_1 + \dots + (-1)^{k_n} \alpha_n.
$$

\section{Moduli spaces of parabolic bundles}
 \label{sc:Mehta-Seshadri}

Mehta and Seshadri
\cite{Mehta-Seshadri}
constructed the moduli space $\scN_\bsalpha$
of semistable parabolic bundles,
which is a normal projective variety 
parametrizing S-equivalence classes
of semistable parabolic bundles.
They have also shown that the open subvariety
$\scN_\bsalpha^s \subset \scN_\bsalpha$
parametrizing stable parabolic bundles
of parabolic degree zero is diffeomorphic
to the moduli space
of irreducible unitary representations
of the fundamental group of $\Xo := X \setminus \{ z_1, \dots, z_n \}$;
\begin{align} \label{eq:Mehta-Seshadri}
 \scN_\bsalpha^s \cong
  \lc \rho \in \Hom(\pi_1(\Xo), SU(2))_{\mathrm{irred}} \relmid
   \rho(\gamma_j) \in \cC_{\alpha_j} \rc /_\sim .
\end{align}
Here $\gamma_j \in \pi_1(\Xo)$ is a loop around $z_j$, and
$\cC_{\alpha_j} \subset SU(2)$ is the conjugacy class
containing $\exp \ld 2 \pi \sqrt{-1} \diag(a_{j,1}, a_{j,2}) \rd$.
The equivalence relation $\sim$ is defined by conjugation;
two representations $\rho$ and $\rho'$ are equivalent
if there is some $g \in SU(2)$ such that $\rho'(\gamma) = g \rho(\gamma) g^{-1}$
for any $\gamma \in \pi_1(\Xo)$.
A parabolic weight is {\em generic}
if semistability implies stability.
If the parabolic weight $\bsalpha$ is generic,
then the moduli space $\scN_\bsalpha$ is smooth.

The diffeomorphism \eqref{eq:Mehta-Seshadri} is given as follows:
For any irreducible unitary representation $\rho$ of $\pi_1(\Xo)$,
one has the flat $\bC^2$-bundle $E_\rho$ on $\Xo$ associated with $\rho$.
By tensoring $E_\rho$ with the structure sheaf $\scO_{\Xo}$
over the constant sheaf $\bC_{\Xo}$,
one obtains a coherent sheaf
$\scE^\circ := E_\rho \otimes_{\bC_{\Xo}} \scO_{\Xo}$
on $\Xo$.
Around each puncture $z_j \in X$,
we take a coordinate $v$ centered at $z_j$,
and consider the universal cover
$$
 \lc x + \sqrt{-1} y \in \bC \relmid y \gg 1 \rc \to \Xo, \quad
 x + \sqrt{-1} y \mapsto v = \exp \ld 2 \pi \sqrt{-1} \lb x+\sqrt{-1}y \rb \rd
$$
of a small disk centered at $z_j$.
Let $g = \rho(\gamma_j) \in SU(2)$ be the holonomy
of the flat bundle $E_\rho$ around $z_j$.
Then a holomorphic section of $\scE^{\circ}$ near $z_j$
is a holomorphic function
$
 f : \lc x + \sqrt{-1} y \in \bC \relmid y \gg 1 \rc \to \bC^2
$
satisfying
$
 f \lb \lb x+1 \rb + \sqrt{-1} y \rb
  = g \cdot f \lb x + \sqrt{-1} y \rb,
$
and one defines the locally-free extension $\scE$ of $\scE^{\circ}$
by saying that $f$ gives a holomorphic section of $\scE$ near $z_j$
if $f$ is bounded.
By a suitable choice of a coordinate of $\bC^2$,
one can assume that $g$ is diagonal;
$
 g = \exp \ld 2 \pi \sqrt{-1} \diag(a_{j,1}, a_{j,2}) \rd.
$
Then the space of holomorphic sections of $\scE$ is spanned by
$v \mapsto (v^{\alpha_j+k}, v^{(1-\alpha_j)+l})$
for non-negative integers $k$ and $l$.
The quasi-parabolic structure of $\scE$ at $z_j$ is defined
as the one-dimensional subspace $\bC \cdot (1, 0)$
in
$$
 \left.
 \bigoplus_{k, l=0}^\infty \bC \cdot \lb v^{\alpha_j+k}, v^{(1-\alpha_j)+l} \rb
  \right/
 v \cdot \bigoplus_{k, l=0}^\infty \bC \cdot \lb v^{\alpha_j+k}, v^{(1-\alpha_j)+l} \rb
  \cong \bC^2.
$$

\section{The moduli space for a distinguished stability parameter}
 \label{sc:distinguished_parameter}

Let $\scN_\bsalpha$ be the moduli space
of semistable parabolic bundles
of rank two and parabolic degree zero
on $X=\bP^1$ with $n$ marked points $(z_1, \dots, z_n)$.
Here, the stability parameter
$
 \bsalpha = (\alpha_1, \ldots, \alpha_n) \in (0, 1/2)^n
$
is related to the parabolic weight
$
 a = ((a_{i,1}, a_{i,2}), \ldots, (a_{n,1}, a_{n,2})) \in ((0, 1) \times (0, 1))^n
$
by
$
 (a_{i,1}, a_{i,2}) = (\alpha_i, 1 - \alpha_i).
$
The vector bundle $\scP$ on $\bX$
corresponding to a parabolic bundle in $\scN_\bsalpha$
has the same class as $\scO \oplus \scO(- \vecs)$
in the Grothendieck group $K(\bX)$
where $\vecs = \vecx_1 + \dots + \vecx_n$.
Consider the line bundle $\scL = \scO(-\vecs+\vecx_n)$.
Since
\begin{align*}
 H^0(\scO(-\vecx_n))
  &= 0, \\
 H^1(\scO(-\vecx_n))
  &\cong H^0(\scO(\vecomega+\vecx_n))^\vee \\
  &= H^0(\scO((n-2)\vecc-\vecs+\vecx_n)^\vee \\
  &= 0, \\
 H^0(\scO(\vecs - \vecx_n))
  &\cong \bC, \\
 H^1(\scO(\vecs - \vecx_n ))
  &\cong H^0(\scO(\vecomega - \vecs + \vecx_n))^\vee \\
  &= H^0(\scO((n-2) \vecc - \vecs - \vecs + \vecx_n))^\vee \\
  &= H^0(\scO((n-2) \vecc - n \vecc + \vecx_n))^\vee \\
  &= H^0(\scO(\vecx_n - 2 \vecc))^\vee \\
  &= 0,
\end{align*}
where
$
 \scO(\vecomega)
  = \scO((n-2)\vecc-\vecs)
$
is the dualizing sheaf,
one has
\begin{align*}
 \chi(\scL, \scP)
  =\chi(\scL, \scO \oplus \scO(\vecs-n))
  = \chi(\scO(\vecs - \vecx_n)) + \chi(\scO(-\vecx_n))
  = 1,
\end{align*}
so that
$$
 \Hom(\scL, \scP) \ne 0.
$$
By taking the saturation of the image of a non-zero morphism
$\phi \in \Hom(\scL, \scP)$,
one obtains a subbundle of $\scP$ of the form $\scL(\veck)$
where $\veck \in \bN \vecx_1 + \dots + \bN \vecx_n$.
Note that
$$
 \pardeg_\bsalpha \scL(\veck)
  > \pardeg_\bsalpha \scL
  = - \alpha_1 - \dots - \alpha_{n-1} + \alpha_n,
$$
so that $\scL(\veck)$ destabilizes $\scP$
if
\begin{equation*} 
 \alpha_1 + \dots + \alpha_{n-1} < \alpha_n.
\end{equation*}
This defines a chamber
in the space of stability parameters,
where every bundle is unstable and
$\scN_\bsalpha = \emptyset$.
The quotient bundle is
$$
 \scQ
  = \scP / \scL(\veck)
  \cong \scO(- \vecx_n-\veck),
$$
and the destabilizing sequence is
$$
 0
  \to \scO(- \vecs + \vecx_n + \veck)
  \to \scP
  \to \scO(- \vecx_n - \veck)
  \to 0.
$$
Consider vector bundles
obtained as extensions
$$
 0 \to \scO(-\vecs+\vecx_n) \to \scP \to \scO(-\vecx_n) \to 0,
$$
which are classified by
\begin{align*}
 e_\scP
  &\in
 \Ext^1(\scO(-\vecx_n), \scO(-\vecs+\vecx_n)) \\
  &=
 H^1(\scO(-\vecs+\vecx_n+\vecx_n)) \\
  &=
 H^1(\scO(\vecc-\vecs)) \\
  &=
 H^0(\scO((n-2)\vecc-\vecs-\vecc+\vecs))^\vee \\
  &=
 H^0(\scO((n-3)\vecc))^\vee .
\end{align*}
Given a morphism
$$
\begin{CD}
 0 @>>> \scO(-\vecs+\vecx_n) @>>> \scP @>>> \scO(-\vecx_n) @>>> 0 \\
 @. @. @VVV @. @. \\
 0 @>>> \scO(-\vecs+\vecx_n) @>>> \scP' @>>> \scO(-\vecx_n) @>>> 0 \\
\end{CD}
$$
between two such bundles $\scP$ and $\scP'$,
one obtains a diagram
$$
\begin{CD}
 0 @>>> \scO(-\vecs+\vecx_n) @>>> \scP @>>> \scO(-\vecx_n) @>>> 0 \\
 @. @VVV @VVV @VVV @. \\
 0 @>>> \scO(-\vecs+\vecx_n) @>>> \scP' @>>> \scO(-\vecx_n) @>>> 0 \\
\end{CD}
$$
since
\begin{align*}
 \Hom(\scO(-\vecs+\vecx_n), \scO(-\vecx_n))
  = H^0(\scO(\vecs - 2 \vecx_n) = 0.
\end{align*}
It follows that the isomorphism classes of such $\scP$
are classified by
$$
 \bP \Ext^1(\scO(-\vecx_n), \scO(-\vecs+\vecx_n))
  \cong \bP H^0(\scO((n - 3)\vecc)^\vee
  \cong \Sym^{n-3} \bP^1
  \cong \bP^{n-3}.
$$
\begin{proposition}
 \label{pr:distinguished_parameter}
One has
$
 \scN_\bsalpha \cong \bP^{n-3}
$
if
$
 2 \alpha_n < \left| \bsalpha \right| < 1
$
and
$
 \left| \bsalpha \right| - 2 \alpha_i - 2 \alpha_n < 0
$
for any $i = 1, \dots, n-1$.
\end{proposition}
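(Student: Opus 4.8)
The plan is to identify $\scN_\bsalpha$ with the projective space $\bP\Ext^1(\scO(-\vecx_n),\scO(-\vecs+\vecx_n))\cong\bP^{n-3}$ produced above, via the map sending a nonzero extension class to the isomorphism class of the middle term $\scP$, and then to upgrade the resulting bijection to an isomorphism of varieties. The whole argument is driven by a single invariant: since $\pardeg_\bsalpha\scP=0$ and $\rank\scP=2$, a rank-two parabolic bundle of this class is semistable (resp.\ stable) exactly when every saturated line subbundle $\scO(\veck)\hookrightarrow\scP$ satisfies $\pardeg_\bsalpha\scO(\veck)\le 0$ (resp.\ $<0$). I would therefore reduce everything to evaluating $\pardeg_\bsalpha$ on a short explicit list of line bundles, using the formula $\pardeg_\bsalpha\scO(\veck)=|\veck|+|\veck(\bsalpha)|$ and the fact that twisting by any effective generator $\vecx_j$ strictly increases $\pardeg_\bsalpha$ (this uses $\alpha_j<1/2$).

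First I would show that every semistable $\scP$ in $\scN_\bsalpha$ arises as such an extension with the twist $\veck=0$. Starting from $\chi(\scL,\scP)=1$ computed above, a nonzero element of $\Hom(\scL,\scP)$ saturates to a line subbundle $\scL(\veck)=\scO(-\vecs+\vecx_n+\veck)$ with $\veck\in\bN\vecx_1+\dots+\bN\vecx_n$. A direct computation gives $\pardeg_\bsalpha\scL=2\alpha_n-|\bsalpha|<0$ by $2\alpha_n<|\bsalpha|$, while $\pardeg_\bsalpha\scL(\vecx_i)=2\alpha_i+2\alpha_n-|\bsalpha|$ for $i\le n-1$ and $\pardeg_\bsalpha\scL(\vecx_n)=1-|\bsalpha|$, both of which are \emph{positive} by the hypotheses $|\bsalpha|-2\alpha_i-2\alpha_n<0$ and $|\bsalpha|<1$. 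Hence if $\veck\neq 0$, then $\scL(\veck)$ is obtained from some $\scL(\vecx_j)$ by a further effective twist, so $\pardeg_\bsalpha\scL(\veck)\ge\pardeg_\bsalpha\scL(\vecx_j)>0$, contradicting semistability. Thus $\veck=0$, the saturation is exactly $\scL$, the quotient is $\scO(-\vecx_n)$, and $\scP$ sits in $0\to\scO(-\vecs+\vecx_n)\to\scP\to\scO(-\vecx_n)\to 0$; moreover its class $e_\scP$ is nonzero, since the split bundle would contain $\scO(-\vecx_n)$ with $\pardeg_\bsalpha\scO(-\vecx_n)=|\bsalpha|-2\alpha_n>0$, a destabilizing subbundle.

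Conversely I would verify that every nonzero $e\in\Ext^1(\scO(-\vecx_n),\scO(-\vecs+\vecx_n))$ yields a \emph{stable} $\scP$. A saturated line subbundle $\scM\subset\scP$ is either contained in $\scO(-\vecs+\vecx_n)$, forcing $\scM=\scO(-\vecs+\vecx_n)$ with $\pardeg_\bsalpha\scM<0$, or it maps nontrivially, hence injectively, to the quotient $\scO(-\vecx_n)$, so $\scM\cong\scO(-\vecx_n-\veck)$ for some effective $\veck$ and the inclusion lifts to $\scP$ only when the pullback of $e$ along $\scM\to\scO(-\vecx_n)$ vanishes. For $\veck=0$ one has $\pardeg_\bsalpha\scO(-\vecx_n)>0$, but this subbundle lifts only if $e=0$; for nonzero effective $\veck$ the maximum of $\pardeg_\bsalpha\scO(-\vecx_n-\veck)$ is attained at a single generator $\vecx_j$, giving $|\bsalpha|-2\alpha_j-2\alpha_n<0$ for $j\le n-1$ and $|\bsalpha|-1<0$ for $j=n$. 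Hence no line subbundle destabilizes, so $\scP$ is stable; the same inequalities, being strict, also exclude parabolic degree $0$ subbundles, so $\scN_\bsalpha=\scN_\bsalpha^s$ is smooth and the two constructions are mutually inverse on points.

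Finally I would promote this bijection to an isomorphism of varieties. The vanishing $\Hom(\scO(-\vecs+\vecx_n),\scO(-\vecx_n))=0$ established above shows the extensions organize into a universal family over $\bP\Ext^1$ (twisting by $\scO(1)$ on the base), inducing a morphism $f\colon\bP^{n-3}\to\scN_\bsalpha$ to the coarse moduli space, bijective by the two previous steps. Since $\scN_\bsalpha$ is normal and projective by Mehta--Seshadri and $\bP^{n-3}$ is smooth and projective of the same dimension $n-3$, the morphism $f$ is finite and, over $\bC$, birational, so Zariski's main theorem yields that $f$ is an isomorphism. I expect the main obstacle to be the converse stability statement: one must confirm that the line bundles $\scO(-\vecx_n-\vecx_j)$ are genuinely the parabolic-degree-maximal candidates among all subbundles projecting nontrivially to $\scO(-\vecx_n)$, so that the three displayed hypotheses are exactly what is required; once this is in place, the passage from a bijection to an isomorphism is routine.
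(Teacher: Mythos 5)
Your proposal is correct and follows essentially the same route as the paper's proof: both show that under the stated inequalities every semistable bundle is a non-split extension $0 \to \scO(-\vecs+\vecx_n) \to \scP \to \scO(-\vecx_n) \to 0$ (by evaluating $\pardeg_\bsalpha$ on the saturated image $\scL(\veck)$ and ruling out $\veck \neq 0$), that conversely every non-split such extension is stable (by the same parabolic-degree estimates on line subbundles $\scO(-\vecx_n-\veck)$), and then identify $\scN_\bsalpha$ with $\bP\Ext^1(\scO(-\vecx_n),\scO(-\vecs+\vecx_n)) \cong \bP^{n-3}$. Your only departures are organizational: you package the degree estimates as a monotonicity-under-effective-twist lemma where the paper writes the closed formula case by case, and you spell out the passage from bijection to isomorphism of varieties (universal family plus Zariski's main theorem), which the paper leaves implicit.
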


\begin{proof}
Let $\scP$ be a rank 2 bundle on $\bX$
obtained as an extension
\begin{equation} \label{eq:P}
 0 \to \scO(-\vecs+\vecx_n) \to \scP \to \scO(-\vecx_n) \to 0.
\end{equation}
Note that
$$
 \pardeg_\bsalpha \scO(-\vecs+\vecx_n)
  = - \alpha_1 - \dots - \alpha_{n-1} + \alpha_n
  = - \left| \bsalpha \right| + 2 \alpha_n,
$$
so that $\scO(-\vecs+\vecx_n)$ does not destabilize $\scP$ if
$
 2 \alpha_n < \left| \bsalpha \right|.
$
If a line bundle $\scL$ other than $\scO(-\vecs+\vecx_n)$ has
a non-trivial morphism to $\scP$,
then $\scL$ has a non-trivial morphism to $\scO(-\vecx_n)$,
so that it can be written as $\scO(-\vecx_n-\veck)$
for some
$
 \veck
  = k_1 \vecx_1 + \dots + k_n \vecx_n + k_0 \vecc
$
where $k_i \in \{ 0, 1 \}$ for $i = 1, \dots, n$ and
$k_0 \in \bN$.
Its parabolic degree is given by
$$
 \pardeg_\bsalpha \scO(-\vecx_n - \veck)
  =
\begin{cases}
 - k_0 + \left| \bsalpha \right| - 2 \sum_{i \in I} \alpha_i - 2 \alpha_n
  & k_n = 0, \\
 - k_0 - 1 + \left| \bsalpha \right| - 2 \sum_{i \in I} \alpha_i
  & k_n = 1, \\
\end{cases}
$$
where
$
 I = \{ i \in \{ 1, \dots, n-1 \} \mid k_i = 1 \}.
$
Note that if the extension \eqref{eq:P} does not split,
then one has $\veck \ne 0$.
For $\veck \ne 0$, the conditions
$
 \left| \bsalpha \right| - 2 \alpha_i - 2 \alpha_n < 0
$
for any $i \in \{ 1, \dots, n-1 \}$
and
$
 |\bsalpha| < 1
$
imply that
$$
 \pardeg_\bsalpha \scO(- \vecx_n - \veck)
  < 0,
$$
so that the line bundle $\scO(- \vecx_n - \veck)$
does not destabilize $\scP$.
The same condition also implies that
the line bundle $\scO(-\vecs+\vecx_n+\veck)$ destabilizes
any vector bundle $\scP$
obtained as an extension
$$
 0 \to \scO(-\vecs+\vecx_n+\veck) \to \scP
  \to \scO(-\vecx_n-\veck) \to 0
$$
for any non-zero $\veck \in \bN \vecx_1 + \dots + \bN \vecx_n$,
and Proposition \ref{pr:distinguished_parameter} is proved.
\end{proof}

\section{Wall-crossings in moduli spaces of parabolic bundles}
 \label{sc:wall-crossing_parabolic1}


The space
$
 A = [0,1/2)^n
$
of stability parameters is divided into chambers
by walls
$$
 H_{I, k} = \lc \bsalpha \in A \, \big| \,
  \textstyle{\sum_{j \in J}} \alpha_j
   - \textstyle{\sum_{i \in I}} \alpha_i = k \rc,
$$
where $I \subset \{ 1, \dots, n \}$,
$J = \{ 1, \dots, n \} \setminus I$ and $k$ is a non-negative integer.
Let $C_+$ and $C_-$ be two chambers
separated by the wall $W_{I,k}$ and
take stability parameters
$\bsalpha_+ \in C_+$,
$\bsalpha_- \in C_-$ and
$\bsalpha_0 \in W_{I,k}$.
There is a diagram
\begin{equation} \label{eq:VGIT_parabolic}
\begin{array}{c}
\begin{psmatrix}[colsep=1cm, rowsep=1cm]
 \scNp & & \scNm \\
 & \scNz
\end{psmatrix}
\psset{shortput=nab,labelsep=1pt,nodesep=3pt,arrows=->}
\ncline{1,1}{2,2}_{\phi_+}
\ncline{1,3}{2,2}^{\phi_-}
\end{array}
\end{equation}
where $\phi_\pm : \scNpm \to \scNz$ are
natural projective morphisms
sending a $\bsalpha_\pm$-stable bundle
to the S-equivalence class
of the same bundle
considered as an $\bsalpha_0$-semistable bundle.
Let $\Sigmapm \subset \scNpm$ be the subscheme
parametrizing $\bsalpha_{\mp}$-unstable bundles.

\begin{proposition}[{Bauer \cite[Proposition 2.7]{Bauer_PB}}] \label{pr:VGIT}
The following hold:
\begin{enumerate}
 \item
If we set
$
 \Sigmaz
 := \phi_+(\Sigmap),
$
then one has
$
 \Sigmaz
  = \phi_-(\Sigmam).
$
 \item
Any point in $\Sigmaz$ can be written as
$[\scS \oplus \scQ]$
where
$
 \pardeg_{\bsalpha_+}(\scS)
  = - \pardeg_{\bsalpha_+}(\scQ)
  < 0
$
and
$
 \pardeg_{\bsalpha_-}(\scS)
  = - \pardeg_{\bsalpha_-}(\scQ)
  > 0.
$
 \item
$
 \phi_+^{-1}([\scS \oplus \scQ])
  \cong \bP \Ext^1(\scQ, \scS)^\vee.
$
 \item
$
 \phi_-^{-1}([\scS \oplus \scQ])
  \cong \bP \Ext^1(\scS, \scQ)^\vee.
$
\end{enumerate}
\end{proposition}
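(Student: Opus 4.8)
The plan is to translate (semi)stability into inequalities among parabolic degrees of line subbundles, and then to isolate the single line bundle whose sign flips across $W_{I,k}$. Since the objects parametrized by $\scNpm$ have rank two and parabolic degree zero, a bundle $\scP$ is $\bsalpha$-semistable (respectively stable) if and only if every line subbundle $\scL \subset \scP$ satisfies $\pardeg_\bsalpha \scL \le 0$ (respectively $< 0$). Because $\pardeg_\bsalpha \scO(\veck)$ is affine-linear in $\bsalpha$ by the formula of \pref{sc:parabolic_weights}, the wall $W_{I,k}$ is precisely the locus where a distinguished line bundle $\scS$, together with its partner $\scQ$ determined by $\det \scP = \scO(-\vecs)$, has $\pardeg_{\bsalpha_0} \scS = \pardeg_{\bsalpha_0} \scQ = 0$. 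I would use repeatedly that, for $\bsalpha_\pm$ close enough to $\bsalpha_0$, the parabolic degree of every line subbundle other than $\scS$ and $\scQ$ stays bounded away from zero, so that only $\scS$ (and its quotient $\scQ$) can alter the stability status across the wall.

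First I would settle (2) and (1) together by analysing $\scP \in \Sigmap$. Being $\bsalpha_+$-stable and $\bsalpha_-$-unstable, $\scP$ carries a line subbundle destabilizing it for $\bsalpha_-$; by the remark above this subbundle must be $\scS$, with $\pardeg_{\bsalpha_-} \scS > 0$, while $\bsalpha_+$-stability forces $\pardeg_{\bsalpha_+} \scS < 0$. Setting $\scQ = \scP/\scS$ and using $\pardeg_{\bsalpha_\pm} \scP = 0$ gives the sign assertions of (2). Moreover $\scP$ sits in a short exact sequence $0 \to \scS \to \scP \to \scQ \to 0$, necessarily non-split, since a splitting would exhibit $\scQ$, with $\pardeg_{\bsalpha_+} \scQ > 0$, as a subbundle and contradict $\bsalpha_+$-stability. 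Viewed as $\bsalpha_0$-semistable, $\scP$ has associated graded $\scS \oplus \scQ$, so $\phi_+(\scP) = [\scS \oplus \scQ]$. The identical analysis on the other side shows each $\scP' \in \Sigmam$ sits in a non-split sequence $0 \to \scQ \to \scP' \to \scS \to 0$ with $\phi_-(\scP') = [\scQ \oplus \scS] = [\scS \oplus \scQ]$; since both maps produce the same associated graded, $\phi_+(\Sigmap) = \phi_-(\Sigmam)$, which is (1).

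For (3) and (4) I would identify the fibers with spaces of extensions. Fix $[\scS \oplus \scQ] \in \Sigmaz$. A point of $\phi_+^{-1}([\scS \oplus \scQ])$ is an $\bsalpha_+$-stable $\scP$ with associated graded $\scS \oplus \scQ$; stability forces $\scS$, the factor of negative $\bsalpha_+$-parabolic degree, to be the subbundle and the extension to be non-split, so such $\scP$ correspond to non-zero classes in $\Ext^1(\scQ, \scS)$. Two classes yield isomorphic bundles exactly when they lie in one $\Aut(\scS \oplus \scQ)$-orbit. Here $\Hom(\scS, \scQ) = \Hom(\scQ, \scS) = 0$, because a non-zero map between the line bundles $\scS, \scQ$ of equal $\bsalpha_0$-parabolic degree would be an isomorphism, contradicting $\scS \not\cong \scQ$. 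Hence $\Aut(\scS \oplus \scQ) = \bC^\times \times \bC^\times$ acts on $\Ext^1(\scQ, \scS)$ through an overall scalar, and the orbit space is naturally $\bP \Ext^1(\scQ, \scS)^\vee$. Interchanging the roles of $\scS$ and $\scQ$ gives $\phi_-^{-1}([\scS \oplus \scQ]) \cong \bP \Ext^1(\scS, \scQ)^\vee$, which is (4).

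The real work is hidden in two places. First, the claim that near the wall only $\scS$ and $\scQ$ matter needs a boundedness argument: one must show that the classes of potential destabilizing subbundles of bundles with the fixed numerical type $[\scO \oplus \scO(-\vecs)]$ form a finite set, so their walls are locally finite and a small neighbourhood of $\bsalpha_0$ meets only $W_{I,k}$. Second, and this is the genuinely technical point, the fibrewise bijections must be promoted to isomorphisms of schemes; I would do this by constructing the universal extension over $\bP \Ext^1(\scQ, \scS)^\vee$, verifying it is a family of $\bsalpha_+$-stable bundles, and invoking the universal property of $\scNp$ to obtain a morphism whose bijectivity and smoothness give the stated isomorphism. (One also checks, via parabolic Riemann--Roch, that on a genuine wall, with both adjacent chambers non-empty, the groups $\Ext^1(\scQ, \scS)$ and $\Ext^1(\scS, \scQ)$ are simultaneously non-zero, so that the central locus $\Sigmaz$ is indeed common to both contractions.) Alternatively, the whole statement follows from the variation-of-GIT machinery of Thaddeus and of Dolgachev--Hu cited after the proposition, specialised to the Mehta--Seshadri construction, which packages these verifications uniformly.
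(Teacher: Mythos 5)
Your proof is correct and takes essentially the same route as the paper's: starting from the $\bsalpha_-$-destabilizing sequence $0 \to \scS \to \scP \to \scQ \to 0$ of a bundle in $\Sigmap$, identifying the fibers of $\phi_\pm$ with (non-split) extensions in the two directions, and projectivizing to get $\bP \Ext^1(\scQ,\scS)^\vee$ and $\bP \Ext^1(\scS,\scQ)^\vee$. The paper's own proof is just a brief sketch of this argument (deferring to Bauer for details), while yours additionally spells out the sign bookkeeping, non-splitness, the $\Aut(\scS \oplus \scQ)$-action, and the boundedness and scheme-theoretic points that the sketch leaves implicit.
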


\begin{proof}
For any bundle $\scP$ in $\Sigmap$,
let
\begin{equation} \label{eq:extension}
 0 \to \scS \to \scP \to \scQ \to 0
\end{equation}
be the $\bsalpha_-$-destabilizing sequence.
Since $\scP$ is of rank two,
both the destabilizing subbundle $\scS$
and the quotient bundle $\scQ$
are line bundles.
Any point in the fiber of $\phi_+$
above the point $[\scS \oplus \scQ] \in \scNz$
is given by the extension of the form \eqref{eq:extension}, and
any such extension is $\bsalpha_+$-stable,
so that one has
$
 \phi_+^{-1}([\scS \oplus \scQ])
  \cong \bP \Ext^1(\scQ, \scS)^\vee.
$
The fiber of $\phi_-$ is obtained
by exchanging the roles of $\scS$ and $\scQ$,
and Proposition \ref{pr:VGIT} is proved.
\end{proof}

If $\bsalpha_0$ does not lie on any other wall,
then $\Sigmaz$ consists of one point,
and the diagram \eqref{eq:VGIT_parabolic}
is a blow-down
followed by a blow-up.
It may also happen that $\phi_+$ or $\phi_-$
is an isomorphism.

\section{Detailed description of the wall-crossing}
 \label{sc:wall-crossing_parabolic2}

Recall that $X$ is the coarse moduli space of $\bX$,
and one has a natural isomorphism
$H^0(\scO_\bX((n-3) \vecc)) \cong H^0(\scO_X(n-3))$.
Since $X$ is a projective line,
one has
$$
 \bP H^0(\scO_X(n-3)) \cong \Sym^{n-3} X \cong \bP^{n-3}.
$$
The Veronese embedding is the diagonal map
$X \hookrightarrow \Sym^{n-3} X$
sending a point $x \in X$ to $[x, \dots, x] \in \Sym^{n-3} X$.
For
$
 \veck
   = \textstyle{\sum_{i \in I} \vecx_i} + k_0 \vecc
   \in L
$
where $I = \{ i_1, \dots, i_r \} \subset \{1, \dots, n \}$ and
$k_0 \in \bZ$,
the $\veck$-th secant variety $V(\veck) \subset \Sym^{n-3} X$
is defined by
$$
 V(\veck) =
  \begin{cases}
    z_{i_1} * \cdots * z_{i_r} * \Sec_{k_0} (X) & k_0 \ge 0, \\
    \emptyset & k_0 < 0,
  \end{cases}
$$
where $X$ and the marked points $z_k \in X$ are considered
as subvarieties of $\Sym^{n-3} X$
by the Veronese embedding.
Here, the join $A * B$ of two subvarieties
of a projective space is the union
$
 \bigcup_{a \in A, \ b \in B} \ell_{a, b}
$
of lines $\ell_{a, b}$
passing through points $a \in A$ and $b \in B$,
and the $k_0$-th secant variety $\Sec_{k_0}(X) = X * \dots * X$
is the join of $k_0$ copies of $X$.

Let $I = \{ i_1, \dots, i_r \}$ be a subset of $\{ 1, \dots, n \}$ and
$J = \{ j_1, \dots, j_{n-r} \} = \{ 1, \dots, n \} \setminus I$
be its complement.
Assume that
one has
$
 - \sum_{i \in I} \alpha_{+,i} + \sum_{j \in J} \alpha_{+,j} - k < 0
$
and
$
 - \sum_{i \in I} \alpha_{-,i} + \sum_{j \in J} \alpha_{-,j} - k > 0.
$
If a vector bundle $\scP$ admits a non-trivial homomorphism
from the line bundle
$$
 \scL = \scO \lb -\vecs + \textstyle{\sum_{j \in J}} \vecx_j - k \vecc \rb,
  \quad 
 \pardeg_{\alpha} \scL
  = - \sum_{i \in I} \alpha_i + \sum_{j \in J} \alpha_j - k,
$$
then its saturation destabilizes the bundle $\scP$
with respect to the stability parameter $\bsalpha_-$.
Assume that $\scP$ is given as an extension
$$
 0
  \to \scO(-\vecs+\vecx_n)
  \to \scP
  \to \scO(-\vecx_n)
  \to 0
$$
classified by an element
$$
 e_\scP
  \in \Ext^1(\scO(-\vecx_n), \scO(-\vecs+\vecx_n))
  \cong H^0(\scO((n-3)\vecc))^\vee,
$$
and $\scO(-\vecs+\vecx_n)$ does not destabilize $\scP$
with respect to the stability parameter $\bsalpha_-$.
Then one has
$\Hom(\scL, \scO(-\vecs+\vecx_n)) = 0$ and
the $\bsalpha_-$-destabilizing morphism
$
 \scL \to \scP
$
must come from a non-trivial morphism
$
 \scL \to \scO(-\vecx_n).
$
Conversely,
a non-trivial morphism
$
 \phi \in \Hom(\scL, \scO(-\vecx_n))
$
lifts to a non-trivial morphism
$
 \varphi \in \Hom(\scL, \scP)
$
if and only if
$
 e_\scP \circ \varphi \in \Ext^1(\scL, \scO(-\vecs+\vecx_n))
$
vanishes.
Under the isomorphisms
\begin{align*}
 \Hom(\scL, \scO(-\vecx_n))
  &= H^0 \lb \scO \lb \textstyle{\sum_{i \in I}} \vecx_i
         - \vecx_n + k \vecc \rb \rb, \\
 \Ext^1(\scO(-\vecx_n), \scO(-\vecs+\vecx_n))
  &\cong H^0(\scO((n-3)\vecc))^\vee, \\
 \Ext^1(\scL, \scO(-\vecs+\vecx_n))
  &\cong H^0(\scO((n-3)\vecc
  -\lb \textstyle{\sum_{i \in I}} \vecx_i - \vecx_n + k \vecc \rb ))^\vee,
\end{align*}
the Yoneda product
$$
 \Hom(\scL, \scO(-\vecx_n))
  \otimes \Ext^1(\scO(-\vecx_n), \scO(-\vecs+\vecx_n))
  \to \Ext^1(\scL, \scO(-\vecs+\vecx_n)) 
$$
corresponds to the composition
$$
 H^0 \lb \scO \lb (n - 3) \vecc -  \lb
  \textstyle{\sum_{i \in I}} \vecx_i - \vecx_n + k \vecc \rb \rb \rb
  \otimes
 H^0 \lb \scO \lb \textstyle{\sum_{i \in I}} \vecx_i - \vecx_n + k \vecc \rb \rb
  \to H^0(\scO((n-3)\vecc)),
$$
so that there is a non-trivial morphism
$\scL \to \scP$
if and only if
$$
 [e_\scP]
  \in  \bP H^0(\scO_\bX(n-3))
  \cong \Sym^{n-3} X
  \cong \bP^{n-3}
$$
belongs to the
secant variety
$V \lb \sum_{i \in I} \vecx_i - \vecx_n + k \vecc \rb$.

\begin{remark} \label{rm:Bauer}
Bauer uses a different parametrization
of the space of stability parameters,
and the stability parameter
that he has chosen as the starting point is written as
\begin{align} \label{eq:Bauer-weight}
 \bsalpha =
  \begin{cases}
   \lb \dfrac{1}{2n-2}, \dfrac{n-2}{2n-2}, \dots, \dfrac{n-2}{2n-2} \rb
    & \text{$n$ is even}, \\
   \lb \dfrac{n-2}{2n-2}, \dots, \dfrac{n-2}{2n-2} \rb
    & \text{$n$ is odd}
  \end{cases}
\end{align}
in the notation here,
which does not satisfy $|\bsalpha|<1$.
The advantage of this stability parameter
is that the underlying bundle of a stable parabolic bundle
is always given by
\begin{align}
 \scE \cong 
  \begin{cases}
    \scO(-n/2) \oplus \scO(-n/2),  & \text{if $n$ is even,}\\
    \scO(-(n+1)/2) \oplus \scO(-(n-1)/2),
      \quad & \text{if $n$ is odd.}
  \end{cases}
\end{align}
For example,
if $n$ is even and the underlying bundle is
$\scO(-n/2-k) \oplus \scO(-n/2+k)$
for some $k > 0$,
then the parabolic degree of the subbundle
$
 \scO(-n/2+k)
$
satisfies
\begin{align*}
 \pardeg \scO(-n/2+k)
  &\ge \deg \scO(-n/2+k) + \sum_{j=1}^n \alpha_j \\
  &=  -n/2+k + \frac{1}{2n-2}+(n-1) \frac{n-2}{2n-2} \\
  &= k-1+\frac{1}{2n-2} > 0.
\end{align*}
\end{remark}

The discussion so far can be summarized
as Theorem \ref{th:VGIT_parabolic} below,
which is a variation of \cite[Theorem 2.9]{Bauer_PB}.
For the sake of simplicity of the exposition,
we restrict ourselves to the case $|\bsalpha| < 1$,
which is the case of interest for the purpose of this paper;
this allows us to deal only with walls $H_{I, k}$ with $k = 0$.

\begin{theorem} \label{th:VGIT_parabolic}
The moduli space $\scN_\bsalpha$
for any parameter $\bsalpha = (\alpha_1, \dots, \alpha_n)$
satisfying $|\bsalpha| = \alpha_1 + \dots + \alpha_n < 1$
is described as follows:
\begin{enumerate}
\setcounter{enumi}{-1}
 \item
Assume $\alpha_1 \le \alpha_2 \le \dots \le \alpha_n$
by reordering the points if necessary.
Set
$
 \bsbeta_0
  = (r \alpha_1, \dots, r \alpha_{n-1}, \alpha_n)
$
for a sufficiently small positive number $r$,
so that $\bsbeta_0$ belongs to the chamber
described in Proposition \ref{pr:distinguished_parameter} and
one has $\scN_{\bsbeta_0} \cong \Sym^{n-3} X \cong \bP^{n-3}$.
 \item
We first cross walls of the form $H_{\{i, n\}, 0}$
for $1 \le i \le n-1$ satisfying
\begin{equation} \label{eq:wall1}
 |\bsalpha| - 2 \alpha_i - 2 \alpha_n > 0.
\end{equation}
When we cross the wall $H_{\{i,n\},0}$,
the moduli space is blown-up
at the point $z_i \in X \subset \Sym^{n-3} X \cong \bP^{n-3}$.
After crossing all these walls,
we arrive at the stability parameter $\bsbeta_1$
such that $\scN_{\bsbeta_1}$ is obtained from
$\scN_{\bsbeta_0}$
by blowing up the points $z_i$
for $1 \le i \le n-1$ satisfying \eqref{eq:wall1}.
 \item
We then cross walls of the form $H_{\{i_1, i_2, n\}, 0}$
for $1 \le i_1 < i_2 \le n-1$ satisfying
$$
 |\bsalpha| - 2 \alpha_{i_1} - 2 \alpha_{i_1} - 2 \alpha_n > 0.
$$
When we cross the wall $H_{\{i_1, i_2, n\}, 0}$,
the moduli space is blown-down
along the strict transform
of the line passing through $z_{i_1}$ and $z_{i_2}$,
and then blown-up in the other direction
so that the exceptional divisor is isomorphic to $\bP^{n-5}$.
In other words,
we blow-up the moduli space
along the strict transform
of the line passing through $z_{i_1}$ and $z_{i_2}$,
and contract it down to the other direction.
 \item
In the $r$-th step,
we cross the walls $H_{\{i_1, \dots, i_r, n\}, 0}$
for $1 \le i_1 < \dots < i_r \le n-1$ satisfying
$$
 |\bsalpha| - 2 \alpha_{i_1} - \dots - 2 \alpha_{i_r} - 2 \alpha_n > 0.
$$
Note that this condition can be written as
$$
 \alpha_{i_1} + \dots + \alpha_{i_r} + \alpha_n
  < \alpha_{j_1} + \dots + \alpha_{j_{n-r-1}}
$$
where $\{ j_1, \dots, j_{n-r-1} \}$ is the complement
of $\{ i_1, \dots, i_r, n \}$ in $\{ 1, \dots, n \}$.
When we cross the wall $H_{\{i_1, \dots, i_r, n\}, 0}$,
the moduli space is blown-up
along the strict transform of the $(r-1)$-dimensional linear subspace
spanned by $z_{i_1}, \dots, z_{i_r}$,
and then contracted to the other direction.
This is a birational transformation
which replaces $\bP^{r-1}$ with $\bP^{n-r-4}$.
 \item
By successively crossing the walls as above,
we arrive at the chamber containing $\bsalpha$.
\end{enumerate}
\end{theorem}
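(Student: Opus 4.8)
The plan is to prove the theorem by induction on the sequence of walls crossed along a generic segment in the space of stability parameters joining $\bsbeta_0$ to $\bsalpha$, taking Proposition~\ref{pr:distinguished_parameter} as the base case and Bauer's wall-crossing description (Proposition~\ref{pr:VGIT}) as the inductive step. First I would fix such a segment. Because $|\bsalpha|<1$, the parabolic-degree estimate $\sum_{j\in J}\alpha_j-\sum_{i\in I}\alpha_i\le\sum_{j\in J}\alpha_j<1$ shows that only walls $H_{I,k}$ with $k=0$ can be met, and a destabilizing line bundle must map nontrivially onto the quotient $\scO(-\vecx_n)$; by the normal-form condition $k_0\ge 0$ on the secant variety $V(\sum_{i\in I}\vecx_i-\vecx_n)$, this forces $n\in I$. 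Writing $I=\{i_1,\dots,i_r,n\}$, the wall $H_{I,0}$ is met exactly when $|\bsalpha|-2\alpha_{i_1}-\dots-2\alpha_{i_r}-2\alpha_n$ changes sign; for the chosen $\bsbeta_0$ and a generic segment these sign changes occur one at a time and in nondecreasing order of $r=|I|-1$, which reproduces the step-by-step order asserted in the theorem.

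For the inductive step, suppose we have reached a chamber adjacent to $H_{I,0}$ and apply Proposition~\ref{pr:VGIT} to a triple $(\bsalpha_+,\bsalpha_0,\bsalpha_-)$ straddling it. The destabilizing subbundle is the saturation of $\scL=\scO(-\vecs+\sum_{j\in J}\vecx_j)$ with $J=\{1,\dots,n\}\setminus I$; since $\det\scP=\scO(-\vecs)$ and line bundles on $\bX$ are rigid, the sub and quotient are the fixed line bundles
\begin{align*}
 \scS=\scO\Bigl(-\textstyle\sum_{i\in I}\vecx_i\Bigr),\qquad
 \scQ=\scO\Bigl(-\textstyle\sum_{j\in J}\vecx_j\Bigr),
\end{align*}
which satisfy $\pardeg_{\bsalpha_0}\scS=\sum_{j\in J}\alpha_j-\sum_{i\in I}\alpha_i=0$ on the wall, so the S-equivalence locus $\Sigmaz=\{[\scS\oplus\scQ]\}$ is a single point. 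By the secant-variety computation preceding the theorem, the locus of $\bsalpha_-$-unstable bundles is, in the coordinates $\Sym^{n-3}X\cong\bP^{n-3}$ on the base moduli space, the join $V(\sum_{i\in I}\vecx_i-\vecx_n)=z_{i_1}*\cdots*z_{i_r}$, i.e.\ the $(r-1)$-plane $\overline{z_{i_1}\cdots z_{i_r}}$ spanned by the marked points; under the inductive identification its strict transform is the fibre $\Sigmap=\phi_+^{-1}(\Sigmaz)$, which is exactly the centre named in the theorem.

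It remains to compute the two fibre directions. Parts~(3) and~(4) of Proposition~\ref{pr:VGIT} give $\phi_+^{-1}(\Sigmaz)=\bP\Ext^1(\scQ,\scS)^\vee$ and $\phi_-^{-1}(\Sigmaz)=\bP\Ext^1(\scS,\scQ)^\vee$. Setting $\scM=\scS\otimes\scQ^{-1}=\scO(\sum_{j\in J}\vecx_j-\sum_{i\in I}\vecx_i)$ and using $2\vecx_i=\vecc$, $\scO(\vecomega)=\scO((n-2)\vecc-\vecs)$, together with Serre duality $\Ext^1(\scE,\scF)\cong H^0(\scF,\scE\otimes\scO_\bX(\vecomega))^\vee$, I would compute
\begin{align*}
 \dim\Ext^1(\scQ,\scS)
  &=h^0\bigl(\scM^{-1}\otimes\scO(\vecomega)\bigr)
   =h^0\bigl(\scO((r-1)\vecc)\bigr)=r,\\
 \dim\Ext^1(\scS,\scQ)
  &=h^0\bigl(\scM\otimes\scO(\vecomega)\bigr)
   =h^0\bigl(\scO((n-r-3)\vecc)\bigr)=n-r-2,
\end{align*}
so the fibres of $\phi_+$ and $\phi_-$ over $\Sigmaz$ are $\bP^{r-1}$ and $\bP^{n-r-3}$. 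Thus crossing $H_{I,0}$ blows up along the $(r-1)$-plane $\overline{z_{i_1}\cdots z_{i_r}}$ and contracts it in the complementary direction: for $r=1$ this is the blow-up at the point $z_i$ (a point is replaced by $\bP^{n-4}$), and for $r\ge2$ it is the flip of steps~2 and beyond, whose intermediate exceptional divisor has fibre $\bP^{n-r-3}$, replacing $\bP^{r-1}$ by $\bP^{n-r-3}$.

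The main obstacle I anticipate is the global bookkeeping of strict transforms. At step $r$ the centre $\overline{z_{i_1}\cdots z_{i_r}}$ contains every lower-dimensional span $\overline{z_{i_{s_1}}\cdots z_{i_{s_{r'}}}}$ with $r'<r$, all of which have already been modified at earlier steps; one must verify that the locus produced by Proposition~\ref{pr:VGIT} at step $r$ is precisely the \emph{strict} (not total) transform of $\overline{z_{i_1}\cdots z_{i_r}}$ under steps $1,\dots,r-1$, that this strict transform is still a smooth $\bP^{r-1}$ meeting the previous exceptional loci transversally, and that the several walls $H_{I,0}$ with $|I|=r+1$ crossed within one step have disjoint or transversally meeting centres, so that the corresponding flips commute and may be performed in any order. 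Establishing this compatibility---equivalently, that the tautological identification of $\scNp$ near $\Sigmaz$ with the iterated modification of $\Sym^{n-3}X$ carries the secant stratification to the stated blow-ups and flips---is the technical heart of the argument; granting it, the induction closes and the description of $\scN_\bsalpha$ in every chamber follows.
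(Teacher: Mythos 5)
Your strategy is the same as the paper's: Theorem~\ref{th:VGIT_parabolic} is presented there as a summary of the discussion in Sections~\ref{sc:wall-crossing_parabolic1}--\ref{sc:wall-crossing_parabolic2}, i.e.\ Proposition~\ref{pr:distinguished_parameter} as the starting chamber, Bauer's Proposition~\ref{pr:VGIT} at each crossing, and the secant-variety computation identifying the $\bsalpha_-$-unstable locus with (the strict transform of) the span $\overline{z_{i_1}\cdots z_{i_r}}$. Your Serre-duality computation of the two fibres is correct and makes explicit what the paper leaves implicit: it gives $\bP\Ext^1(\scQ,\scS)^\vee\cong\bP^{r-1}$ and $\bP\Ext^1(\scS,\scQ)^\vee\cong\bP^{n-r-3}$, consistent with the blow-up at a point for $r=1$ (exceptional divisor $\bP^{n-4}$) and with the ``exceptional divisor $\bP^{n-5}$'' of step~2; in particular it shows that ``replaces $\bP^{r-1}$ with $\bP^{n-r-4}$'' in step~3 of the theorem (and in Theorem~\ref{th:VGIT_points}) is an off-by-one slip, the correct target being $\bP^{n-r-3}$.

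There is, however, one step in your argument that fails: the claim that along a generic straight segment from $\bsbeta_0$ to $\bsalpha$ the sign changes occur ``in nondecreasing order of $r=|I|-1$.'' Writing the segment as $\bsalpha_s=(s\alpha_1,\dots,s\alpha_{n-1},\alpha_n)$, the wall $H_{I,0}$ with $n\in I$ is met at $s=\alpha_n/c_I$, where $c_I=|\bsalpha|-\alpha_n-2\sum_{i\in I\setminus\{n\}}\alpha_i$; so walls are crossed in order of increasing $\sum_{i\in I\setminus\{n\}}\alpha_i$, not of increasing $|I|$. Concretely, for $n=7$ and $\bsalpha=(0.01,0.01,0.1,0.15,0.15,0.15,0.15)$ (so $|\bsalpha|=0.72$), both $H_{\{1,2,7\},0}$ and $H_{\{3,7\},0}$ are crossed, but the three-element wall is met at $s\approx0.28$, \emph{before} the two-element wall at $s\approx0.41$. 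Hence your path does not produce the order asserted in the theorem (all $|I|=2$ walls, then all $|I|=3$ walls, and so on), and the intermediate spaces in your induction differ from those in the statement. Since $\scN_\bsalpha$ itself is path-independent, your induction would still compute the final space correctly \emph{provided} one shows either that a path realizing the rank order exists (this needs an argument that the intermediate sign patterns are realized by nonempty chambers), or that crossings at incomparable walls commute---their centres meet only in lower-dimensional strata because the $z_i$ lie on a rational normal curve, hence are in linearly general position. That is exactly the strict-transform bookkeeping you flag but defer; the paper is no more rigorous on this point, but because your write-up explicitly hinges on the false ordering claim, this gap is yours to repair.
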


\section{Wall crossing in $\scM_\bsw$} \label{sc:comparison}

Let $\bsw = (w_1, \ldots, w_n) \in \bQ^n$
be a stability parameter
for the moduli space of ordered $n$-points on $\bP^1$,
which can be taken from
$$
W = \lc \left. \bsw = (w_1, \dots, w_n) \in \bQ^n \, \right| \,
  |\bsw| = w_1 + \dots + w_n = 2 \rc
$$
by rescaling $\bsw$ if necessary;
unlike the moduli space $\scN_\bsalpha$,
the overall rescaling of $\bsw$ only changes
the ample $\bQ$-line bundle on $\scM_\bsw$ and
does not affect the moduli space $\scM_\bsw$.
A configuration
$(x_1, \ldots, x_n)$
of ordered $n$ points on $\bP^1$ is $\bsw$-semistable
if for any point $x \in \bP^1$,
one has
$$
 \sum_{i=1}^n \delta_{x, x_i} w_i \le 1.
$$
The moduli space $\scM_\bsw$ contains the configuration space
$$
 X(2,n) = ((\bP^1)^n \setminus \Delta) / \PGL_2
$$
of $n$ points on $\bP^1$ as an open subscheme
if and only if $\bsw \in (0,1)^n$,
where
$$
 \Delta = \{ (x_1, \dots, x_n) \in (\bP^1)^n \mid
  x_i = x_j \text{ for some } i \ne j \}
$$
is the big diagonal.
By normalizing the last three points as
$
 (x_{n-2}, x_{n-1}, x_n) = (0, 1, \infty)
$
by the $\PGL_2$-action,
one can realize $X(2,n)$ as an open subscheme
$$
 X(2,n)
  \cong \{ [x_1:\dots:x_{n-3}:1] \in \bP^{n-3} \mid
   x_i \ne 0, 1, x_j \text{ for }i \ne j \},
$$
which is the complement
of a hyperplane arrangement in $\bP^{n-3}$.

Walls in the space $W$ of stability parameters are given by
$$
 H_I = \lc \bsw \in W \, \left| \, \textstyle{\sum_{i \in I}} w_i = 1
   \right. \rc
$$
for a proper subset $I = \{ i_1, \dots, i_r \}$ of $\{ 1, \dots, n \}$.
Note that $\sum_{i \in I} w_i = 1$ implies
$\sum_{j \in J} w_j = 1$
for $J = \{j_1, \dots, j_{n-r} \} = \{ 1, \dots, n \} \setminus I$.
Let $C_+$ and $C_-$ be two chambers
separated by the wall $W_I$, and
take stability conditions
$\bsw_+ \in C_+$, $\bsw_- \in C_-$, and
$\bsw_0 \in W_I$.
Assume that $\sum_{i \in I} w_{i, +} > 1$,
$\sum_{i \in I} w_{i, -} < 1$, and
$\bsw_0$ is not on any other walls.
Then one has a diagram
\begin{equation} \label{eq:VGIT_points}
\begin{array}{c}
\begin{psmatrix}[colsep=1cm, rowsep=1cm]
 \scMp & & \scMm \\
 & \scMz
\end{psmatrix}
\psset{shortput=nab,labelsep=1pt,nodesep=3pt,arrows=->}
\ncline{1,1}{2,2}_{\phi_+}
\ncline{1,3}{2,2}^{\phi_-}
\end{array}
\end{equation}
where $\phi_+$ blows-down the subvariety
$$
 S_{\bsw_+}
  = \{ x_{j_1} = \dots = x_{j_{n-r}} \}
  \cong \bP^{r-2}
$$
of $\scMp$
to the subvariety
$$
 S_{\bsw_0}
  = \lc x_{i_1} = \dots = x_{i_r}, \ 
      x_{j_1} = \dots = x_{j_{n-r}} \rc
$$
of $\scMz$ consisting of just one point,
and $\phi_-$ blows-down the subvariety
$$
 S_{\bsw_-}
  = \{ x_{i_1} = \dots = x_{i_r} \}
  \cong \bP^{n-r-2}
$$
of $\scMm$
to the same point in $\scMz$.
This wall-crossing is described in
\cite[Theorem 4.1]{MR3194079}
from a symplectic point of view.

The diagram \eqref{eq:VGIT_points}
for the special case $I = \{ n \}$ gives a wall-crossing
from the empty space $\scMp = S_{\bsw_+} \cong \bP^{-1} = \emptyset$
to the projective space
$\scMm = S_{\bsw_-} \cong \bP^{n-3}$
through one point $\scMz = S_{\bsw_0}$.
The chamber $C_-$ containing $\bsw_-$ in this case
is defined by
\begin{equation} \label{eq:C1}
 C_- = \{ \bsw \in W \mid w_n < 1 \text{ and }
  w_i + w_n > 1 \text{ for any } 1 \le i \le n - 1 \}.
\end{equation}
The moduli space $\scM_\bsw$ for $\bsw \in C_-$
is described explicitly as follows:
One can set $x_n = \infty \in \bP^1$
by the $\PGL_2$-action.
Since one has $x_i \ne x_n$ for any $1 \le i \le n-1$
by the stability condition,
one must have $(x_1, \dots, x_{n-1}) \in \bA^{n-1}$.
One can set $x_{n-1} = 0$ by the residual $\PGL_2$-action,
and then one is left with the $\bG_m$-action on $\bA^{n-2}$.
The stability condition prohibits $x_1 = \dots = x_{n-1}$,
so that one cannot have $x_1 = \dots = x_{n-2} = 0$.
This shows that one has
$$
 \scM_\bsw = (\bA^{n-2} \setminus \{ 0 \}) / \bG_m,
$$
which is nothing but the projective space $\bP^{n-3}$.

\begin{theorem} \label{th:VGIT_points}
The moduli space $\scM_\bsw$
for any stability parameter $\bsw = (w_1, \dots, w_n)$
can be obtained from $\bP^{n-3}$
by the following birational transformations:
Assume $w_1 \le w_2 \le \dots \le w_n$
by reordering the points if necessary.
We start from the chamber \eqref{eq:C1} and
gradually increase $w_1, \dots, w_{n-1}$ and
decrease $w_n$.
Set $p_i = [\delta_{i0}:\dots:\delta_{i, n-2}] \in \bP^{n-3}$
for $1 \le i \le n-2$
and $p_{n-1} = [1:\dots:1] \in \bP^{n-3}$.
\begin{enumerate}
 \item
We first cross the walls $H_{\{i, n\}}$
for $1 \le i \le n-1$ satisfying $w_i + w_n < 1$.
When we cross the wall $H_{\{i,n\}}$,
the moduli space is blown-up
at the point $p_i$.
 \item
We then cross the walls $H_{\{i_1, i_2, n\}}$
for $1 \le i_1 < i_2 \le n-1$ satisfying $w_{i_1} + w_{i_1} + w_n < 1$.
When we cross the wall $H_{\{i_1, i_2, n\}}$,
the moduli space is blown-down
along the strict transform
of the line passing through $p_{i_1}$ and $p_{i_2}$,
and then blown-up in the other direction,
so that the exceptional divisor is isomorphic to $\bP^{n-5}$.
In other words,
we blow-up the moduli space
along the strict transform
of the line passing through $p_{i_1}$ and $p_{i_2}$,
and contract it down to the other direction.
 \item
In the $r$-th step,
we cross the walls $H_{\{i_1, \dots, i_r, n\}}$
for $1 \le i_1 < \dots < i_r \le n-1$
satisfying $w_{i_1} + \dots + w_{i_r} + w_n < 1$.
Note that this condition is equivalent to
$w_{i_1} + \dots + w_{i_r} + w_n < w_{j_1} + \dots + w_{j_{n-r-1}}$
where $\{ j_1, \dots, j_{n-r-1} \}$ is the complement
of $\{ i_1, \dots, i_r, n \}$ in $\{ 1, \dots, n \}$.
When we cross the wall $H_{\{i_1, \dots, i_r, n\}}$,
the moduli space is blown-up
along the strict transform of the $(r-1)$-dimensional linear subspace
spanned by $p_{i_1}, \dots, p_{i_r}$,
and then contracted down to the other direction.
This is a birational transformation
which replaces $\bP^{r-1}$ with $\bP^{n-r-4}$.
 \item
By successively crossing the walls as above,
we arrive at the chamber containing $\bsw$.
\end{enumerate}
\end{theorem}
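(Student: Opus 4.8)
The plan is to argue by induction on the number of walls that must be crossed on the way from the chamber $C_-$ of \eqref{eq:C1} to $\bsw$, taking as base case the identification $\scM_\bsw \cong \bP^{n-3}$ already established for $\bsw \in C_-$, with homogeneous coordinates $[x_1 : \dots : x_{n-2}]$ coming from the positions of the points $1, \dots, n-2$ after normalizing $x_{n-1} = 0$ and $x_n = \infty$. The inductive step is the VGIT diagram \eqref{eq:VGIT_points}: each crossing of a wall $W_I$ changes the space by the composite $\phi_-^{-1} \circ \phi_+$, which contracts $S_{\bsw_+}$ and then extracts $S_{\bsw_-}$. The whole argument runs in parallel with the proof of Theorem \ref{th:VGIT_parabolic}, the only genuinely new input being the explicit identification of the centers $S_{\bsw_\pm}$ with linear subspaces of $\bP^{n-3}$ and their strict transforms.

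First I would pin down which walls are crossed and in which order. Every wall may be written as $H_I$ with $n \in I$, since $H_I = H_{I^c}$. In $C_-$ one has $w_i + w_n > 1$ for all $i \le n-1$, hence $\sum_{i \in I} w_i > 1$ for every $I \ni n$ with $|I| \ge 2$, while $w_n < 1$ for $I = \{n\}$. Moving along a monotone path that increases $w_1, \dots, w_{n-1}$ and decreases $w_n$ toward the target, each $\sum_{i \in I} w_i$ is monotone, so $H_{\{i_1, \dots, i_r, n\}}$ is crossed precisely when $w_{i_1} + \dots + w_{i_r} + w_n < 1$ at the target; this is exactly the condition in the statement, and (using $|\bsw| = 2$) it is equivalent to the displayed inequality against the complementary weights. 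After a small generic perturbation the path meets the walls one at a time and avoids their pairwise intersections, so \eqref{eq:VGIT_points} applies at each crossing; I would moreover arrange the path to meet the walls in order of increasing $r$, which is what makes the successive strict transforms well defined.

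Next comes the geometric heart of the argument: identifying the contracted locus. When we cross $H_I$ with $I = \{i_1, \dots, i_r, n\}$ we pass from $\sum_{i \in I} w_i > 1$ (the current space, in the role of $\scM_{\bsw_+}$) to $\sum_{i \in I} w_i < 1$, so $\phi_+$ contracts
$$
 S_{\bsw_+} = \{ x_j = x_{j'} \text{ for all } j, j' \in J \},
 \qquad
 J = \{ 1, \dots, n-1 \} \setminus \{ i_1, \dots, i_r \}.
$$
In the coordinates $[x_1 : \dots : x_{n-2}]$ these equations are linear, so $S_{\bsw_+}$ is a projective linear subspace, of dimension $r-1$ by the count in \eqref{eq:VGIT_points}. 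Each coordinate point $p_{i_a}$, at which all points of $\{1, \dots, n-1\} \setminus \{i_a\} \supseteq J$ collide, lies on $S_{\bsw_+}$; since here $r \le n-2$, the relevant $p_{i_a}$ (a proper subset of the coordinate vertices together possibly with $[1 : \dots : 1]$) are in general position, hence linearly independent, and span a $\bP^{r-1}$. A linear subspace contained in another of equal dimension coincides with it, so $S_{\bsw_+} = \langle p_{i_1}, \dots, p_{i_r} \rangle$, or rather its strict transform in the current space. Feeding this into \eqref{eq:VGIT_points}, the crossing blows down this $\bP^{r-1}$ and extracts the transverse locus $S_{\bsw_-}$: for $r = 1$ the map $\phi_+$ is an isomorphism and the crossing is just the blow-up of the point $p_i$, giving step (1); for $r = 2$ the exceptional divisor is the $\bP^{n-5}$ of step (2); and in general one obtains the flips of step (3).

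The main obstacle is the strict-transform bookkeeping. One must verify that at the $r$-th stage the subspace $\langle p_{i_1}, \dots, p_{i_r} \rangle$, after the earlier blow-ups of all points $p_i$, of all lines $\langle p_i, p_{i'} \rangle$, and so on, still appears in the current space $\scM_{\bsw_+}$ as the strict transform of that linear subspace, so that the contracted center is smooth of the expected dimension and the identification of $S_{\bsw_+}$ survives. This is precisely where the ordering by increasing $r$ is essential: the center for a wall at stage $r$ contains, as subvarieties, the centers of all lower-stage walls indexed by subsets of $\{i_1, \dots, i_r\}$, and one must check that the prior modifications separate and smooth these, so that the higher centers remain smooth rather than acquiring singularities along the earlier exceptional loci. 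Once this is settled the induction closes, and since $\scM_\bsw$ depends only on the chamber of $\bsw$ the resulting description is independent of the chosen path; the argument is entirely parallel to the parabolic side already treated in Theorem \ref{th:VGIT_parabolic}.
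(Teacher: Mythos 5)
Your proposal is correct and takes essentially the same approach as the paper, which presents Theorem \ref{th:VGIT_points} not with a separate formal proof but as a summary of the immediately preceding discussion: the explicit identification $\scM_\bsw \cong \bP^{n-3}$ on the chamber \eqref{eq:C1}, followed by repeated application of the wall-crossing diagram \eqref{eq:VGIT_points}, whose centers $S_{\bsw_+}$ and $S_{\bsw_-}$ are the collision loci that you likewise identify with (strict transforms of) linear spans of the points $p_i$. The additional details you supply --- which walls a monotone path from \eqref{eq:C1} actually crosses, the general-position argument identifying $S_{\bsw_+}$ with the span of $p_{i_1}, \dots, p_{i_r}$, and the strict-transform bookkeeping you flag --- are precisely the points the paper leaves implicit, and they are consistent with its intrinsic (collision-locus) description of the centers.
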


\begin{example}
Set $n = 5$ and
$$
 \bsw_1 = \lb \frac{2}{5}, \frac{2}{5}, \frac{2}{5}, \frac{2}{5}, \frac{2}{5} \rb.
$$
We consider the straight line segment
$$
 \bsw_t = (1-t) \bsw_0 + t \bsw_1
$$
starting from the stability parameter
$$
 \bsw_0 = \lb \frac{5}{16}, \frac{5}{16}, \frac{5}{16}, \frac{5}{16}, \frac{3}{4} \rb
$$
in the chamber \eqref{eq:C1}
satisfying $\scM_{\bsw_0} \cong \bP^2$.
The wall-crossing takes place
at $t = \frac{5}{21}$ and
$$
 \bsw_t = \lb \frac{1}{3}, \frac{1}{3}, \frac{1}{3}, \frac{1}{3}, \frac{2}{3} \rb,
$$
where the points $x_i = x_j = x_k$
for $1 \le i < j < k \le 4$ are stable for $t \le \frac{5}{21}$
and unstable for $t > \frac{5}{21}$.
These points are blown-up
by the wall-crossing,
so that the point $x_i = x_j = x_k$ is replaced
by the exceptional divisor
$x_\ell = x_5$
where $\{ i, j, k, \ell \} = \{ 1, 2, 3, 4 \}$.
With respect to the normalization
$$
 (x_1, x_2, x_3, x_4, x_5)
  = (x, y, 1, 0, \infty),
$$
four points at the center of the blow-up are given by
\begin{align*}
 x_1 = x_2 = x_3 \ : \ [x:y:1] = [1:1:1] \in \bP^2, \\
 x_1 = x_2 = x_4 \ : \ [x:y:1] = [0:0:1] \in \bP^2, \\
 x_1 = x_3 = x_4 \ : \ [x:y:1] = [0:1:0] \in \bP^2, \\
 x_2 = x_3 = x_4 \ : \ [x:y:1] = [1:0:0] \in \bP^2.
\end{align*}
These points are in general position,
so that $\scM_{\bsw_1}$ is $\bP^2$
blown-up at four points in general position.
\end{example}

Now we are ready to prove the following:

\begin{theorem} \label{th:comparison}
Let $\bsalpha$ be a stability parameter 
for the moduli space of parabolic bundles
satisfying $|\bsalpha| < 1$, and
$\bsw = 2 \bsalpha / |\bsalpha|$ be the corresponding
normalized stability parameter
for the moduli space ordered $n$ points on $\bP^1$.
Then one has an isomorphism
$$
 \scN_\bsalpha \cong \scM_\bsw
$$
of algebraic varieties.
\end{theorem}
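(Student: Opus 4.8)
The plan is to exploit the parallelism between the two wall-crossing descriptions established in Theorem~\ref{th:VGIT_parabolic} and Theorem~\ref{th:VGIT_points}: both realize the moduli space in question as the result of a combinatorially identical sequence of birational modifications of $\bP^{n-3}$, and the whole point is to match these two sequences. The first observation is that the normalization $\bsw = 2\bsalpha/|\bsalpha|$ is order-preserving and carries the $\bsalpha$-walls onto the $\bsw$-walls: a parabolic wall $H_{I,0}$ is cut out by $\sum_{i\in I}\alpha_i = |\bsalpha|/2$, which under $w_i = 2\alpha_i/|\bsalpha|$ becomes $\sum_{i\in I} w_i = 1$, the defining equation of $H_I$. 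In particular the chamber of Proposition~\ref{pr:distinguished_parameter} (where $2\alpha_n < |\bsalpha|$ and $|\bsalpha| - 2\alpha_i - 2\alpha_n < 0$) maps precisely onto the chamber $C_-$ of \eqref{eq:C1} (where $w_n < 1$ and $w_i + w_n > 1$), and the $r$-th step wall condition $|\bsalpha| - 2\alpha_{i_1} - \dots - 2\alpha_{i_r} - 2\alpha_n > 0$ is literally equivalent to $w_{i_1} + \dots + w_{i_r} + w_n < 1$. Thus, after ordering so that $\alpha_1 \le \dots \le \alpha_n$ (equivalently $w_1 \le \dots \le w_n$), the two theorems cross the \emph{same} walls in the same order, performing at each step the same type of modification (blow up the strict transform of the $\bP^{r-1}$ spanned by the base points indexed by $\{i_1,\dots,i_r\}$, then contract it to a $\bP^{n-r-4}$).

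Next I would pin down the initial isomorphism. On the parabolic side the base points are $z_1,\dots,z_{n-1}$, sitting on the rational normal curve $X \hookrightarrow \Sym^{n-3}X \cong \bP^{n-3}$ via the Veronese embedding; since any $n-2$ points of a degree-$(n-3)$ rational normal curve in $\bP^{n-3}$ are linearly independent, these are $n-1$ points in general position. On the configuration side the base points $p_1,\dots,p_{n-2},p_{n-1}=[1:\dots:1]$ form the standard projective frame, again $n-1=(n-3)+2$ points in general position. Because $\PGL_{n-2}=\operatorname{Aut}(\bP^{n-3})$ acts simply transitively on ordered $(n-1)$-tuples in general position, there is a unique projective isomorphism $\psi_0 : \scN_{\bsbeta_0} \to \scMm$ with $\psi_0(z_i) = p_i$ for all $i$. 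Being projective linear, $\psi_0$ carries the linear span of any subset $\{z_{i_1},\dots,z_{i_r}\}$ onto the span of $\{p_{i_1},\dots,p_{i_r}\}$.

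With these two ingredients the proof becomes an induction on the steps of the wall-crossing. Suppose that after crossing some initial batch of common walls we have an isomorphism between the two modified spaces matching the strict transforms of all spans of subsets of base points. Crossing the next common wall $H_{\{i_1,\dots,i_r,n\},0} \leftrightarrow H_{\{i_1,\dots,i_r,n\}}$ performs on each side the elementary modification centered at the strict transform of the span of the \emph{same} index set, which by the inductive hypothesis is matched by the running isomorphism. Since such a blow-up-then-blow-down is determined by its smooth center inside the ambient variety, the isomorphism extends across the modification and continues to match the strict transforms of the remaining spans (strict transforms being preserved under isomorphism). Iterating until we reach the chamber of $\bsalpha$ (equivalently of $\bsw$) yields $\scN_\bsalpha \cong \scM_\bsw$; the degenerate case $2\alpha_n \ge |\bsalpha|$, where both sides are empty, is trivial.

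The main obstacle I anticipate is the careful bookkeeping in this inductive step. One must verify that within a single step the several walls (indexed by the distinct size-$r$ subsets) are crossed compatibly on the two sides, that the relevant centers are simultaneously smooth and intersect as the two theorems implicitly require, and that the running isomorphism genuinely transports the strict transform of each span rather than merely the original span. This is where one leans on the local flip structure of Proposition~\ref{pr:VGIT} together with its configuration-space counterpart in the diagram \eqref{eq:VGIT_points}, ensuring that the two modifications agree not just combinatorially but as morphisms of schemes.
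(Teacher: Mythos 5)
Your proposal is correct and follows essentially the same route as the paper: the paper's proof likewise reduces to the observation that the wall-crossing sequences of Theorem~\ref{th:VGIT_parabolic} and Theorem~\ref{th:VGIT_points} are identical, and then matches the two starting chambers by an isomorphism of $\bP^{n-3}$ carrying the points $z_i$ (in general position, being on the rational normal curve) to the points $p_i$. You have merely made explicit the wall correspondence under $\bsw = 2\bsalpha/|\bsalpha|$ and the inductive transport of the isomorphism across the modifications, which the paper leaves implicit.
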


\begin{proof}
Since the wall-crossing in $\scN_\bsalpha$ and $\scM_\bsw$
described in Theorem \ref{th:VGIT_parabolic}
and Theorem \ref{th:VGIT_points} are identical,
it suffices to show the existence of an isomorphism
$$
 \scN_\bsalpha \cong \scM_\bsw
$$
for a stability parameter $\bsalpha$
satisfying the condition in Proposition
\ref{pr:distinguished_parameter},
such that the points $z_i \in \scN_\bsalpha$ are mapped to
$p_i \in \scM_\bsw$ for $i = 1, \dots, n-1$.
This is clear, since both moduli spaces are
$(n-3)$-dimensional projective spaces and
the points are $n-1$ points in general position.
\end{proof}

A more general result,
which gives an isomorphism
between the moduli space of parabolic $G$-bundles
for a simply-connected simple algebraic group $G$
and a GIT quotient of a product of flag varieties,
is shown in \cite[Proposition 4.8]{0910.0577}.


\section{Bending systems on $\scM_\bsw$} \label{sc:bending}

Let $G = SU(2)$, and identify the Lie algebra $\frakg = \su$ 
with its dual by the Killing form
$
 \langle  \,\, , \,\, \rangle : \frakg \times \frakg \to \bR.
$
Let $T \subset G$ be the maximal torus consisting of diagonal matrices,
and take a base 
\[
  x_0 = \begin{pmatrix}
    2\pi \sqrt{-1} & 0 \\
    0 & -2\pi \sqrt{-1}
  \end{pmatrix}
\]
of the Lie algebra $\frakt$ of $T$.
For $\alpha \in \bR_{>0}$,
the adjoint orbit $\scO_{\alpha} \subset \frakg$  of $\alpha x_0$
has a natural symplectic form called the Kostant-Kirillov form, as follows.
Recall that a tangent vector of $\scO_{\alpha}$ at $x$ can be written as
$\ad_{\xi}(x) = [x,\xi]$ for $\xi \in \frakg$.
The Kostant-Kirillov form $\omega_{\mathcal{O}_{\alpha}}$ is given by
\[
  \omega_{\mathcal{O}_{\alpha}} (\ad_{\xi}(x), \ad_{\eta}(x))
  = \langle x, [\xi, \eta] \rangle.
\]

For $\bsalpha = (\alpha_1, \dots, \alpha_n) \in (\bR_{> 0})^n$, 
we define
$\scO_{\bsalpha} = \prod_i \scO_{\alpha_i} \subset \frakg^n$
with the $i$-th projection
$\mathrm{pr}_i : \scO_{\bsalpha} \to \scO_{\alpha_i}$, $i=1, \dots, n$.
The diagonal $G$-action on $\scO_{\bsalpha}$
is Hamiltonian with respect to the symplectic form 
$\sum_i \mathrm{pr}_i^* \omega_{\mathcal{O}_{\alpha_i}}$, and 
its moment map is given by
\[
  \mu : \scO_{\bsalpha}
  \longrightarrow  \frakg, \quad
  \bsx = (x_1, \dots, x_n) \longmapsto x_1 + \dots + x_n.
\]
From the Kirwan-Kempf-Ness Theorem, the symplectic reduction
\begin{equation}
  \mu^{-1}(0)/G
  = \{ \bsx  \in \scO_{\bsalpha} \, 
     | \,  x_1 + \dots + x_n = 0 \, \}/ G
  \label{eq:PolygonSp_MWred}
\end{equation}
is diffeomorphic to $\scM_{\bsw}$ for $\bsw = 2 \bsalpha /|\bsalpha|$,
and the induced symplectic form 
is compatible with the complex structure
(on the smooth locus of $\scM_{\bsw}$).
In what follows we write this space as $\scM_{\bsalpha}$ 
to emphasize its symplectic structure
$\omega_{\scM_{\bsalpha}}$.
Note that $(\scM_{k \bsalpha}, \omega_{\scM_{k \bsalpha}})$ is
symplectomorphic to $(\scM_{\bsalpha}, k \omega_{\scM_{\bsalpha}})$
for $k > 0$.
The expression \eqref{eq:PolygonSp_MWred} shows that 
$\scM_{\bsalpha}$ parametrizes $n$-gons in 
$\frakg \cong \bR^3$ with fixed side lengths $\alpha_1, \dots, \alpha_n$
modulo Euclidean motions.

Let $e_1, \dots, e_n \in \bR^2$ denote side edge vectors
of a reference $n$-gon $P \subset \bR^2$,
satisfying $e_1 + \dots + e_n = 0$.
For a diagonal $d = e_i + e_{i+1} + \dots + e_{i+k}$ of $P$, 
we define $\varphi_{d} : \scM_{\bsalpha} \to \mathbb{R}$ 
as the length function
\[
  \varphi_{d}(\bsx) = |x_{i} + x_{i+1} + \dots + x_{i+k}|
\]
of the corresponding diagonal in $\bsx$.
This function is called a {\it bending Hamiltonian},
since the Hamiltonian flow of $\varphi_d$ bends $n$-gons around 
the diagonal corresponding to $d$ 
(see \cite{Kapovich-Millson_SGPES} or \cite{Klyachko_SP}).

We fix a triangulation of $P$ given by $n-3$ diagonals 
$d_1, \dots, d_{n-3}$ which 
do not intersect in the interior of $P$,
and let $\Gamma$ denote its dual graph.
Note that $\Gamma$ is a trivalent tree with $n$ leaves.
The bending system associated to $\Gamma$ is defined by
\[
  \Phi_{\Gamma} = ( \varphi_{d_1}, \dots, \varphi_{d_{n-3}}) : 
  \scM_{\bsalpha} \longrightarrow \mathbb{R}^{n-3}.
\]

\begin{theorem}[Kapovich and Millson \cite{Kapovich-Millson_SGPES},  
Klyachko \cite{Klyachko_SP}]
The $(n-3)$-tuple of functions $\Phi_{\Gamma}$
is a completely integrable system on $\scM_{\bsalpha}$.
The functions $\varphi_{d_i}$ are action variables, and hence 
define a Hamiltonian torus action on an open dense subset
where $\varphi_{d_i}$ are smooth.
The image 
$$
 \Delta_\Gamma(\bsalpha)
  := \Phi_\Gamma(\scM_{\bsalpha})
  \subset \bR^{n-3}
$$
is a convex polytope defined by triangle inequalities.
\end{theorem}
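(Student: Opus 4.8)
The plan is to realize the bending Hamiltonians as collective Hamiltonians for the partial-sum moment maps, deduce commutativity from the non-crossing (laminar) structure of a triangulation, and then identify the image by cutting each triangle out of the triangulated polygon.

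First I would package the diagonal $d = e_i + \dots + e_{i+k}$ through its partial-sum map $\mu_d \colon \scO_\bsalpha \to \frakg$, $\bsx \mapsto x_i + \dots + x_{i+k}$, which is the moment map for the diagonal $G$-action on the factors $\scO_{\alpha_i} \times \dots \times \scO_{\alpha_{i+k}}$ (and trivial on the remaining factors). Since $\varphi_d = |\mu_d|$ is the composite of $\mu_d$ with the $\mathrm{Ad}$-invariant norm on $\frakg \cong \bR^3$, the collective Hamiltonian formula of Guillemin--Sternberg gives that the Hamiltonian vector field of $\varphi_d$ at $\bsx$ is the infinitesimal action of $\hat\mu_d := \mu_d/|\mu_d| \in \frakg$. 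Because $\hat\mu_d$ generates the rotation fixing the axis $\mu_d$, the element $\mu_d$ — hence $\hat\mu_d$ — is constant along the flow, so the flow is the diagonal action of $\exp(t\,\hat\mu_d)$ on the edges $x_i, \dots, x_{i+k}$, i.e.\ a rigid rotation of the sub-polygon about the diagonal. This flow is periodic, which is precisely the bending circle action; note that $\varphi_d$ is smooth exactly where $\mu_d \ne 0$, that is, where the diagonal has positive length.

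Next, for two diagonals $d, d'$ of the triangulation I would choose for each one its edge-arc so that the resulting family of index sets is laminar (pairwise nested or disjoint), which is possible precisely because the diagonals do not cross; the choice does not affect $\varphi_d$, since on $\scM_\bsalpha$ the complementary partial sum equals $-\mu_d$ and $|{-v}| = |v|$. If the index sets are disjoint the two flows act on different factors and commute trivially; if the index set of $d'$ is nested inside that of $d$, the rigid rotation generated by $\varphi_d$ carries $\mu_{d'}$ to a rotate of itself and therefore preserves $|\mu_{d'}| = \varphi_{d'}$, so $\{\varphi_d, \varphi_{d'}\} = d\varphi_{d'}(X_{\varphi_d}) = 0$. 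All $\varphi_d$ are $G$-invariant, hence descend to $\scM_\bsalpha$ and Poisson-commute there. Since $\dim \scO_{\alpha_i} = 2$ and we reduce the $2n$-dimensional $\scO_\bsalpha$ by $G = SU(2)$, one has $\dim \scM_\bsalpha = 2n - 6 = 2(n-3)$, so the $n-3$ commuting functions $\varphi_{d_1}, \dots, \varphi_{d_{n-3}}$ are the correct number for complete integrability. On the open dense locus where every diagonal of $\Gamma$ has positive length the periodic flows assemble into a Hamiltonian $T^{n-3}$-action; it is effective because bending about distinct diagonals varies the dihedral angles independently, so $d\varphi_{d_1}, \dots, d\varphi_{d_{n-3}}$ are linearly independent there and the $\varphi_{d_i}$ are action variables.

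Finally, to identify the image I would cut the triangulated polygon into its $n-2$ triangles; each triangle has sides drawn from the fixed lengths $\alpha_1, \dots, \alpha_n$ and the diagonal lengths $\ell_m = \varphi_{d_m}$. A length vector $(\ell_1, \dots, \ell_{n-3})$ is realized by a closed $n$-gon in $\bR^3$ if and only if each triangle closes up in a plane, i.e.\ iff its three side lengths satisfy the triangle inequality; conversely, given lengths satisfying all these inequalities one builds each planar triangle and glues them successively along the shared diagonals with arbitrary dihedral angles, the latter parametrizing the torus fibre. Hence $\Delta_\Gamma(\bsalpha)$ is exactly the solution set of the triangle inequalities for the $n-2$ triangles, which is a convex polytope. \emph{The main obstacle} I anticipate is the first step: verifying via the collective Hamiltonian formula that the flow of $\varphi_d$ is a genuine \emph{periodic} rigid rotation, so that one obtains an honest torus action rather than merely commuting flows, together with the surjectivity half of the image computation, where the explicit gluing of triangles with free dihedral angles must be carried out to show that every point of the triangle-inequality polytope is attained.
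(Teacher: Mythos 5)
The paper itself gives no proof of this statement: it is imported verbatim from Kapovich--Millson and Klyachko, so there is no internal argument to compare yours against. Judged on its own, your proposal is correct and reconstructs the standard proof of the cited references: bending Hamiltonians as collective Hamiltonians $|\mu_d|$ for the partial-sum moment maps, whose flows are the rigid rotations $\mathrm{Ad}_{\exp(t\,\mu_d/|\mu_d|)}$ of the sub-polygon; Poisson-commutativity from laminarity of the index sets attached to non-crossing diagonals (disjoint sets give flows moving disjoint factors, nested sets give invariance of $|\mu_{d'}|$ under the rigid rotation); descent to the reduction $\scM_{\bsalpha}$; and identification of the image by cutting along the $n-3$ diagonals into $n-2$ triangles and, conversely, gluing planar triangles back together with free dihedral angles. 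Two points deserve slightly more care. First, your commutation argument is carried out upstairs on $\scO_{\bsalpha}$, where the two choices of edge-arc for a given diagonal define genuinely different functions; you should say explicitly that Poisson brackets of $G$-invariant functions descend to the symplectic reduction, where the two choices coincide, so that the brackets computed for your laminar choices really are the brackets of the functions $\varphi_{d_i}$ on $\scM_{\bsalpha}$. Second, the step ``the torus action is effective, so the $d\varphi_{d_i}$ are linearly independent on an open dense set'' needs the standard fact that an effective torus action on a connected manifold is locally free on an open dense subset (or, more concretely, one can exhibit the dihedral angles $\theta_j$ as conjugate angle variables with $\{\varphi_{d_i}, \theta_j\} = \delta_{ij}$, which is how Kapovich and Millson argue); local freeness gives pointwise independence of the generating vector fields, hence of the differentials by nondegeneracy of the symplectic form. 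With these two remarks supplied, your argument is a complete and faithful rendering of the proof in the cited sources.
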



\section{Goldman systems on $\scN_{\bsalpha}$} \label{sc:Goldman_system}

Let $(X, (z_1, \ldots, z_n))$ be a projective line
with $n$ marked points.
For each marked point $z_i \in X$,
we take a small open disk $D_i \subset X$ around $z_i$ such that
$\overline{D_i} \cap \overline{D_j} = \emptyset$ for $i \ne j$, and
set $\RS = X \setminus (D_1 \cup \dots \cup D_n)$.
Then the fundamental group of $\RS$ is given by 
\[
  \pi_1 (\RS) = \langle
  \gamma_1, \gamma_2, \dots, \gamma_n \, | \,
  \gamma_1 \dots \gamma_n =1 \rangle,
\]  
where $\gamma_i$ is the homotopy class representing the $i$-th
boundary component $\partial D_i$.

For $\bsalpha = (\alpha_1, \dots, \alpha_n) \in (0, 1/2)^n$,
let $\scC_{\alpha_i} \subset G$ denote the conjugacy class of 
$e^{\alpha_i x_0} = 
\diag ( e^{2\pi \sqrt{-1} \alpha_i}, e^{-2\pi \sqrt{-1} \alpha_i} )$,
and set $\scC_{\bsalpha} = \prod_{i=1}^n \scC_{\alpha_i} \subset G^n$.
As recalled in \pref{sc:Mehta-Seshadri},
the moduli space of parabolic $SU(2)$-bundles on $X$
with parabolic weight $\bsalpha$
can be identified with the moduli space
\begin{align*}
  \scN_{\bsalpha} (\RS)
  &:= \{ \rho \in \Hom (\pi_1(\RS), G) \, | \, 
       \rho (\gamma_i) \in \scC_{\alpha_i}, \,\, i=1, \dots, n \}/G \\
  &\cong \{ \bsg = (g_1, \dots, g_n) \in \scC_{\bsalpha} \, | \,
      g_1 \dots g_n = 1 \}/G
\end{align*}
of $G$-representations of $\pi_1(\RS)$.
Since $\scC_{\alpha_i}$ is a geodesic sphere around the identity,
$\scNa (\RS)$ is regarded as a moduli space of $n$-gons
in $G \cong S^3$ with fixed side lengths
(cf.~e.g.~\cite{Millson-Poritz}).


We recall the description of the symplectic structure on $\scNa (\RS)$
given in \cite{Guruprasad-Huebschmann-Jeffrey-Weinstein}.
Fix a representation $\rho$ in
$$
  \widetilde{\scN}_{\bsalpha} = \{ \rho \in \Hom(\pi_1 (\RS), G) \mid
   \rho(\gamma_i ) \in \scC_{\alpha_i}, \, i=1, \dots, n
   \}
$$
and let $\frakg_{\rho}$ denote the representation of $\pi_1(\RS)$
on $\frakg$ given by
$$
  \pi_1(\RS) \overset{\rho}{\longrightarrow}
  G \overset{\Ad}{\longrightarrow}
  \Aut (\frakg).
$$ 
Take a curve $\rho_t$ in $\widetilde{\scN}_{\bsalpha}$ with $\rho_0 = \rho$
and set
$
 u= \left.\frac{d}{dt}\right|_{t=0} \rho_t \colon \pi_1(\RS) \to \frakg.
$
Then $\rho_t$ can be written as
$$
  \rho_t (\gamma ) =
    \exp (t u(\gamma) + O(t^2)) \rho (\gamma).
$$
The homomorphism condition 
$\rho_t(\gamma \gamma') 
  = \rho_t(\gamma) \rho_t (\gamma')$ implies that
\begin{equation}
  u (\gamma \gamma') = u(\gamma) +
    \Ad_{\rho (\gamma)} u(\gamma').
  \label{eq:cocycle1}
\end{equation}
From the boundary condition
$\rho_t (\gamma_i) \in \scC_{\alpha_i}$, we have
$\rho_t (\gamma_i) = g_{i,t}^{-1} \rho(\gamma_i) g_{i,t}$
for some $g_{i,t} \in G$.
This implies that 
\begin{equation}
  u(\gamma_i) = \Ad_{\rho(\gamma_i)} \xi_i - \xi_i
  \label{eq:cocycle2}
\end{equation}
for each $i$, where 
$\xi_i = \left. \frac{d}{dt} \right|_{t=0} g_{i,t} \in \frakg$.
Namely, $T_{\rho} \widetilde{\scN}_{\bsalpha}$ is identified
with the space of {\it parabolic 1-cocycles}:
\begin{align*}
  T_{\rho} \widetilde{\scN}_{\bsalpha} &\cong 
    Z^1_{\mathrm{par}} (\pi_1(\RS); \frakg_{\rho}) \\
    &= \{ u: \pi_1(\RS) \to \frakg \, | \,
         \text{$u$ satisfies 
         (\ref{eq:cocycle1}) and (\ref{eq:cocycle2})}
        \, \}.
\end{align*}
Similarly, the tangent space to the $G$-orbit of $\rho$ 
is spanned by {\it parabolic 1-coboundaries}
$$
  u(\gamma) = \Ad_{\rho(\gamma)} \xi - \xi, \quad \xi \in \frakg.
$$
Let $B^1_{\mathrm{par}} (\pi_1(\RS); \frakg_{\rho})$
denote the vector space of parabolic 1-coboundaries.
Then the tangent space $T_{\rho} \scNa$ at $\rho$ is identified
with the {\it first parabolic cohomology}
$$
 \Hpar^1(\pi_1(\RS); \frakg_\rho)
  = Z^1_{\mathrm{par}} (\pi_1(\RS); \frakg_{\rho})
      / B^1_{\mathrm{par}} (\pi_1(\RS); \frakg_{\rho}).
$$
The space of 2-chains 
$C_2(\pi_1(\RS); \mathbb{Z})$ is generated by symbols
$[\gamma|\gamma']$ for $\gamma, \gamma' \in \pi_1(\RS)$,
and the cup product
$$
  \cup : H^1_{\mathrm{par}}(\pi_1(\RS); \frakg_{\rho})
  \times H^1_{\mathrm{par}}(\pi_1(\RS); \frakg_{\rho})
  \longrightarrow 
  H^2 (\pi_1(\RS), \partial \pi_1(\RS) ; \mathbb{R})
$$
is given by 
$$
  (u \cup v) ([\gamma | \gamma']) 
   = \langle u(\gamma), \Ad_{\rho(\gamma)} v(\gamma')
     \rangle
$$
for 1-cocycles $u$, $v$.
In what follows we write $\Ad_{\gamma} = \Ad_{\rho (\gamma)}$
for short.
The relative fundamental class in
$H_2(\pi_1(\RS), \partial \pi_1(\RS); \mathbb{Z})$ is represented by
$$
  [\pi_1(\RS), \partial \pi_1(\RS)] = 
  \sum_{i=1}^{n-1} [\gamma_1 \dots \gamma_i | \gamma_{i+1}].
$$

\begin{theorem}[Guruprasad et al.
{\cite[Key Lemma 8.4]{Guruprasad-Huebschmann-Jeffrey-Weinstein}}]
  Let $u$, $v$ be parabolic 1-cocycles such that
  $u(\gamma_i) = \Ad_{\gamma_i} \xi_i - \xi_i$ and 
  $v(\gamma_i) = \Ad_{\gamma_i} \eta_i - \eta_i$,
  $i=1, \dots, n$,  
  respectively.
  Then the symplectic form on $\scNa (\RS)$ is given by
  \begin{equation}
    \omega_{\scNa} (u, v) 
    = (u \cup v) ( [\pi_1, \partial \pi_1] )
    + \frac 12 \sum_{i=1}^n
      (\langle \xi_i, \Ad_{\gamma_i}\eta_i \rangle
      - \langle \eta_i, \Ad_{\gamma_i}\xi_i \rangle ).
  \label{eq:symp_par}
  \end{equation}
\end{theorem}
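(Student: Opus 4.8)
The plan is to realize the asserted expression as the Atiyah--Bott--Goldman symplectic form on the moduli space of flat $G$-connections on $\RS$ and to match it against the stated cochain formula by a Stokes-type computation. Throughout I identify a representation $\rho \in \tpolS$ with a flat connection $A$ on the trivial $G$-bundle over $\RS$ whose holonomy along $\gamma_i$ is $\rho(\gamma_i)$, and I identify a tangent vector, that is a parabolic $1$-cocycle $u$, with a $\frakg$-valued $1$-form $a_u$ solving the linearized flatness equation $d_A a_u = 0$, via the usual prescription $u(\gamma) = \int_{\gamma} a_u$ after parallel transport, so that \eqref{eq:cocycle1} becomes the cocycle identity for $a_u$. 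The boundary condition \eqref{eq:cocycle2}, namely $u(\gamma_i) = \Ad_{\gamma_i}\xi_i - \xi_i$, translates into the statement that on a collar of $\partial D_i$ the form $a_u$ is gauge equivalent to $d_A \xi_i$, which is exactly the infinitesimal condition for $\rho(\gamma_i)$ to remain in the conjugacy class $\scC_{\alpha_i}$.

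First I would take as the definition of $\omega_{\scNa}$ the form induced by $\int_{\RS} \langle a_u \wedge a_v \rangle$, understood as the symplectic reduction of the Atiyah--Bott form under the constraints $\rho(\gamma_i) \in \scC_{\alpha_i}$; this is the standard mechanism by which the fixed coadjoint orbits $\scC_{\alpha_i}$ contribute a Kostant--Kirillov correction, and it is precisely this correction that I expect to reproduce the half-sum term. Next I would fix a CW model of $\RS$ with a single vertex, edges $\gamma_1, \dots, \gamma_n$, and one relative $2$-cell whose attaching word realizes $\gamma_1 \cdots \gamma_n = 1$, chosen so that the relative fundamental cycle is exactly $[\pi_1(\RS), \partial \pi_1(\RS)] = \sum_{i=1}^{n-1} [\gamma_1 \cdots \gamma_i \mid \gamma_{i+1}]$ as recorded above.

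The key computational step is to evaluate $\int_{\RS} \langle a_u \wedge a_v \rangle$ on this model. Cutting $\RS$ along the cell boundaries and applying Stokes, the contribution of the relative $2$-cell collapses, via the chain-level cup product formula $(u \cup v)([\gamma \mid \gamma']) = \langle u(\gamma), \Ad_{\gamma} v(\gamma') \rangle$ recalled in the excerpt, to the interior term $(u \cup v)([\pi_1, \partial \pi_1])$; meanwhile each collar of $\partial D_i$ contributes a boundary integral which, after substituting $a_u = d_A \xi_i$ and $a_v = d_A \eta_i$ and integrating around the circle of holonomy $\rho(\gamma_i)$, produces $\tfrac12(\langle \xi_i, \Ad_{\gamma_i} \eta_i \rangle - \langle \eta_i, \Ad_{\gamma_i} \xi_i \rangle)$. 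Summing over $i$ yields precisely \eqref{eq:symp_par}.

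The main obstacle I anticipate is the consistent bookkeeping of these boundary contributions together with the fact that the cochain-level cup product term is itself not skew-symmetric. One must verify that its antisymmetric part is compensated by the half-sum term, so that the total pairing is genuinely skew; that the total is independent of the auxiliary choices of $\xi_i$ and $\eta_i$ (two choices differing by an element centralizing $\rho(\gamma_i)$, whose extra contribution I would check cancels); and that it annihilates parabolic $1$-coboundaries, hence descends to $\Hpar^1(\pi_1(\RS); \frakg_\rho)$. Establishing this compatibility --- equivalently, that the failure of the cup product to be skew on a relative cycle with boundary is corrected exactly by the orbit term --- is the technical heart, and it is here that Poincar\'e--Lefschetz duality for $(\RS, \partial \RS)$ with coefficients in $\frakg_\rho$ enters to guarantee nondegeneracy of the resulting form.
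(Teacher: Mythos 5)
The first thing to observe is that the paper contains no proof of this statement: it is quoted verbatim as Key Lemma~8.4 of Guruprasad--Huebschmann--Jeffrey--Weinstein, so there is no in-paper argument to compare against, and your proposal has to be measured against the proof in that reference.

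Your route is genuinely different from theirs. GHJW work finite-dimensionally from the start: they build a $2$-form on a space of (parabolic) cocycles --- essentially by writing down the right-hand side of \eqref{eq:symp_par} --- and verify by cochain manipulations, using the cocycle identity and the $\Ad$-invariance of the pairing, that it is gauge-invariant, descends to $\Hpar^1(\pi_1(\RS);\frakg_\rho)$, is closed, and is nondegenerate; the point of their paper is precisely to avoid infinite-dimensional gauge theory. You instead take the Atiyah--Bott pairing $\int_{\RS}\langle a_u\wedge a_v\rangle$ on the space of flat connections as the \emph{definition} of the symplectic structure and propose to derive the cochain formula by Stokes on a CW model, with the collars of the $\partial D_i$ producing the half-sum term. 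This is the standard gauge-theoretic heuristic behind the formula and can be turned into a proof; its advantage is transparency (the half-sum term is visibly the conjugacy-class correction, i.e.\ the group analogue of the Kostant--Kirillov form), while GHJW's advantage is that every object in sight is finite-dimensional and the verifications are elementary algebra.

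That said, as a proof the proposal has two real gaps, both of which you name but neither of which you close. First, the collar computation --- that after substituting $a_u=d_A\xi_i$ and $a_v=d_A\eta_i$ the boundary contribution is exactly $\frac12(\langle\xi_i,\Ad_{\gamma_i}\eta_i\rangle-\langle\eta_i,\Ad_{\gamma_i}\xi_i\rangle)$, and that the failure of skew-symmetry of the cochain cup product is compensated by precisely this term --- is asserted rather than performed; all of the sign and factor-of-$\frac12$ content of the statement lives in this step, so deferring it means deferring the theorem. Second, your starting point is itself nontrivial for a surface with boundary: the gauge group action on connections over $\RS$ is not Hamiltonian in the naive sense (the moment map acquires boundary terms), so the claim that the form induced on $\scNa(\RS)$ by $\int_{\RS}\langle a_u\wedge a_v\rangle$ under the constraints $\rho(\gamma_i)\in\scC_{\alpha_i}$ is well defined, closed, and nondegenerate is close in difficulty to the statement being proved; making it rigorous is exactly what GHJW's cochain machinery, Jeffrey's extended moduli spaces, and the quasi-Hamiltonian formalism recalled in Sections \ref{sc:extended_moduli}--\ref{sc:gluing} of this paper were invented to do. So the proposal is a correct and well-motivated plan, but its two load-bearing steps are both left open.
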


For a later use, we write the first term of (\ref{eq:symp_par}) 
more explicitly.
By using (\ref{eq:cocycle1}) inductively, we have
$$
  u(\gamma_1 \dots \gamma_i) 
  = \sum_{k=1}^i \Ad_{\gamma_1 \dots \gamma_{k-1}} u(\gamma_k)
  = \sum_{k=1}^i \Ad_{\gamma_1 \dots \gamma_{k-1}}
    (\Ad_{\gamma_k} \xi_k - \xi_k).
$$
Hence we obtain
\begin{align*}
  (u \cup v) ( [\pi_1, \partial \pi_1] )
  &= \sum_{i=1}^{n-1} 
     \langle u( \gamma_1\dots \gamma_i),
     \Ad_{\gamma_1 \dots \gamma_i} v(\gamma_{i+1}) \rangle\\
  &= \sum_{i=1}^{n-1} \sum_{k=1}^{i}
     \langle \Ad_{\gamma_1\dots \gamma_{k-1}} u(\gamma_k),
     \Ad_{\gamma_1 \dots \gamma_i} v(\gamma_{i+1}) \rangle\\
  &= \sum_{i=1}^{n-1} \sum_{k=1}^{i}
     \langle  u(\gamma_k),
     \Ad_{\gamma_k \dots \gamma_i} v(\gamma_{i+1}) \rangle\\
  &= \sum_{i=1}^{n-1} \sum_{k=1}^{i}
     \langle  \Ad_{\gamma_k} \xi_k - \xi_k,
     \Ad_{\gamma_k \dots \gamma_i} 
     (\Ad_{\gamma_{i+1}}\eta_{i+1} - \eta_{i+1}) \rangle.
\end{align*}

Next we recall a completely integrable system on $\scN_{\bsalpha}(\RS)$
introduced by Goldman \cite{Goldman_IFLG}.
For a simple closed curve $C \subset \RS$, we write
$[C] = \gamma_i \gamma_{i+1} \dots \gamma_{i+k}$ in $\pi_1(\RS)$, and 
define a function 
$\vartheta_C = \theta_{\bsalpha, C} \colon \scN_{\bsalpha}(\RS) \to \bR$ by
\[
  \vartheta_C (\bsg)= \cos^{-1} \left( \frac 12 \mathrm{tr}
    (g_i g_{i+1} \dots g_{i+k}) \right).
\]
Take a set $C_1, \dots, C_{n-3}$ of simple closed curves
defining a pair-of-pants decomposition of $\RS$.
Note that the set of such choices is in one-to-one correspondence
with the set of trivalent trees $\Gamma$ with $n$-leaves.
We then obtain a set of $n-3$ functions
$$
  \Theta_{\bsalpha, \Gamma} =
  \Theta_\Gamma = (\vartheta_{C_1}, \dots, \vartheta_{C_{n-3}}) :
  \scNa \longrightarrow \mathbb{R}^{n-3}.
$$ 

\begin{theorem}[Goldman \cite{Goldman_IFLG}, 
Jeffrey and Weitsman \cite{Jeffrey-Weitsman_BSO}]
For each pair-of-pants decomposition of $\RS$ with dual graph $\Gamma$, 
the set of functions
$\Theta_\Gamma : \scNa \to \mathbb{R}^{n-3}$ 
is a completely integrable system.
The functions $\vartheta_{C_i}$ are action variables, and hence define a 
Hamiltonian torus action on an open dense subset of $\scN_{\bsalpha}$.
The image $\Theta_{\Gamma} (\scNa) \subset \mathbb{R}^{n-3}$ 
is a convex polytope given by the inequalities
\[
  |u_{k_1}- u_{k_2}| \le u_{k_3} 
  \le \min \{ u_{k_1}+ u_{k_2}, 
  2-( u_{k_1}+ u_{k_2}) \} 
\]
for each pair-of-pants.
In particular, if $|\bsalpha| < 1$, then the image is given by
triangle inequalities, i.e., 
$\Theta_{\Gamma} (\scNa) = \Delta_{\Gamma}(\bsalpha)$.
\end{theorem}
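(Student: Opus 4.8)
The plan is to derive all three assertions—complete integrability, the Hamiltonian torus action with the $\vartheta_{C_i}$ as action variables, and the polytope description of the image—from Goldman's Poisson-bracket formula for trace functions together with his twist flows. First I would record Poisson commutativity. On the locus where $\rho([C_i]) \ne \pm I$, each $\vartheta_{C_i}$ is a smooth function of $\tr \rho([C_i])$, and Goldman's formula expresses the Poisson bracket of two trace functions as a sum over the geometric intersection points of the corresponding curves, weighted by signs and by holonomies along the arcs between them. Since the curves $C_1, \dots, C_{n-3}$ defining a pair-of-pants decomposition are pairwise disjoint, every such sum is empty, so $\{ \vartheta_{C_i}, \vartheta_{C_j} \} = 0$. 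As $\dim \scNa = 2(n-3)$ and we have $n-3$ Poisson-commuting functions, complete integrability follows once functional independence is checked.

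For the torus action I would invoke Goldman's twist flow. Cutting $\RS$ along the separating curve $C_i$ decomposes it into two subsurfaces glued along $C_i$, and the Hamiltonian flow of $\vartheta_{C_i}$ acts by conjugating the holonomies of one side by the one-parameter subgroup $\exp(t\,\zeta_i(\rho))$ generated by a normalized logarithm $\zeta_i(\rho) \in \frakg$ of $\rho([C_i])$. Because one-parameter subgroups of $SU(2)$ are circles, this flow is periodic, and the normalization $\vartheta_{C_i} = \cos^{-1}(\tfrac12 \tr)$ is precisely the one making the period constant, so that $\vartheta_{C_i}$ is the moment map of a Hamiltonian circle action. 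Twists around disjoint curves act independently, so on the open dense locus where every $\rho([C_i]) \ne \pm I$ the $n-3$ circle actions assemble into a Hamiltonian $T^{n-3}$-action with moment map $\Theta_\Gamma$; independence of the twist flows yields functional independence of the $\vartheta_{C_i}$ there, completing the proof of complete integrability and identifying them as action variables.

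The heart of the argument is the image. Using the decomposition, a point $(u_1, \dots, u_{n-3})$, with $u_i$ the rescaled angle of $C_i$, lies in $\Theta_\Gamma(\scNa)$ if and only if one can choose, at every trivalent vertex (pair of pants) of $\Gamma$, three elements of $SU(2)$ in the three conjugacy classes attached to its edges—the fixed classes of angle $\alpha_j$ at the leaves and classes of angle $u_i$ along the internal edges—whose product is the identity, and then glue these local solutions along internal edges. The existence of such a triple with angles $a,b,c$ is the multiplicative (spherical) analogue of the triangle inequality, $|a-b| \le c \le \min\{a+b,\, 2-(a+b)\}$ in the chosen normalization; intersecting these constraints over all vertices gives the asserted convex polytope, while the residual conjugation freedom along internal edges is exactly the twist parameters, which simultaneously yields surjectivity and shows the generic fibers are the tori of the previous step. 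The main obstacle I anticipate lies here: proving both that the spherical inequalities are necessary and that they are sufficient requires solvability of the $SU(2)$ product equation in prescribed conjugacy classes together with careful bookkeeping of the gluing freedom so that it matches the twist flows.

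Finally, for $|\bsalpha|<1$ I would show the upper (wrap-around) constraints never bind. Repeatedly applying the triangle bounds $c \le a+b$ from the leaves inward, each edge parameter is at most the sum of the leaf parameters on one of the two components cut out by that edge; since the three far sides at a vertex partition the leaf set, the three edge parameters meeting there sum to at most the total leaf measure, which equals $2|\bsalpha| < 2$ in this normalization. Hence on the triangle polytope one automatically has $u_{k_1}+u_{k_2}+u_{k_3} \le 2$, so $u_{k_3} \le 2-(u_{k_1}+u_{k_2})$ holds and the minimum reduces to $u_{k_1}+u_{k_2}$. The spherical inequalities therefore degenerate to the ordinary triangle inequalities $|u_{k_1}-u_{k_2}| \le u_{k_3} \le u_{k_1}+u_{k_2}$, which are exactly those defining the bending polytope, giving $\Theta_\Gamma(\scNa) = \Delta_\Gamma(\bsalpha)$.
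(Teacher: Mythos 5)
The theorem you were asked to prove is not proved in this paper at all: it is stated as a recalled result, with the proof delegated to the cited works of Goldman and of Jeffrey--Weitsman. So there is no internal proof to compare against; what can be said is that your reconstruction is essentially the standard argument contained in those references, and it is sound. Goldman's intersection formula for brackets of trace functions gives Poisson commutativity for disjoint curves; the twist flows, with the $\cos^{-1}\lb \frac{1}{2}\tr \rb$ normalization making the generator of constant length, give the Hamiltonian circle actions on the locus where $\rho([C_i]) \ne \pm 1$ (this is precisely Jeffrey--Weitsman's observation); and the description of the image by solvability of the $SU(2)$ product equation in prescribed conjugacy classes at each pair of pants, with the gluing freedom along internal edges accounting for the torus fibers, is the standard way the moment polytope is identified. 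Your final reduction for $|\bsalpha|<1$ --- propagating $c \le a+b$ from the leaves inward so that the three edge values at any vertex sum to at most the total boundary weight, whence the wrap-around constraints never bind --- is correct and is exactly why the Goldman polytope coincides with the bending polytope $\Delta_\Gamma(\bsalpha)$ in this range. Two points deserve slightly more care if this were written out in full: functional independence should be stated as the fact that an effective Hamiltonian torus action has a moment map of maximal rank on an open dense set (your phrase ``independence of the twist flows yields functional independence'' is the right idea but is close to circular as written), and the normalization of the angle functions (values in $[0,1]$ versus $[0,1/2]$, i.e.\ whether leaf values are $\alpha_i$ or $2\alpha_i$) must be fixed once and used consistently, since the constant $2$ in the wrap-around inequality and the identification with $\Delta_\Gamma(\bsalpha)$ live in different conventions. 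It is also worth noting that the present paper, in its later sections on extended moduli spaces and gluing, re-derives the Hamiltonian torus action of $\Theta_\Gamma$ by quasi-Hamiltonian reduction of fusion products of doubles; your twist-flow argument is the classical counterpart of that construction, and either route establishes the action-variable statement.
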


\section{Extended moduli spaces} \label{sc:extended_moduli}

Fix base points of $\partial D_i$ for $i=1,\ldots,n$.
Let $B_i$ for $i=1,\ldots,n$ be the loop around $\partial D_i$
starting and ending at the base point on $\partial D_i$,
and $A_i$ for $i=2,\ldots,n$ be the path
from the base point on $\partial D_i$ to the base point on $\partial D_1$.
Then the generators of $\pi_1(\RS)$ are given by 
$\gamma_1 = [B_1], \gamma_2 = [A_2 B_2 A_2^{-1}], \dots, 
\gamma_n = [A_n B_n A_n^{-1}]$. 
Let 
\[
  \alcove = \{ \alpha x_0 \in \frakt \, | \, \alpha \in [0,1/2] \}
  \subset \frakt
\]
denote the fundamental alcove.

\begin{definition}[Jeffrey \cite{Jeffrey_EMS}, 
Hurtubise and Jeffrey {\cite[Section 2]{Hurtubise-Jeffrey_RWFFPB}}]
The {\it $G$-extended moduli space} $\scN^G(\RS)$ is the space of 
$G$-representations of the groupoid generated by $A_2, \dots, A_n$ and
$B_1, \dots, B_n$, or equivalently,
\[ 
  \scN^{G}(\RS) = \{ (\bsa, \bsb)
                     \in G^{n-1} \times G^n\, | \,
                     b_1 (a_2 b_2 a_2^{-1}) \dots (a_n b_n a_n^{-1}) = 1
                  \},
\]
where $(\bsa,\bsb)=(a_2, \dots, a_n, b_1, \dots, b_n)$.
The {\it $T$-extended moduli space} is defined by
\[ 
  \scN^{T}(\RS) = \{ (\bsa, \bsb)
                     \in \scN^G(\RS) \, | \,
                     b_i \in \exp(\alcove), \, \, i=1, \dots, n
                  \} \subset G^{n-1} \times T^n.
\]
The  $\frakg$- and {\it $\frakt$-extended moduli spaces} are defined by
\begin{align*}
  \scN^{\frakg}(\RS) 
  &= \{ (\bsa, \bsx) \in G^{n-1} \times \frakg^n\, | \,
                     e^{x_1} (a_2 e^{x_2} a_2^{-1}) 
                     \dots (a_n e^{x_n}  a_n^{-1}) = 1
                  \}, \\
  \scN^{\frakt}(\RS) 
  &= \{ (\bsa, \bsx) \in G^{n-1} \times \frakt^n\, | \,
                     e^{x_1} (a_2 e^{x_2} a_2^{-1}) 
                     \dots (a_n e^{x_n}  a_n^{-1}) = 1
                  \},
\end{align*}
respectively, 
where $(\bsa, \bsx) = (a_2, \dots, a_n, x_1,  \dots , x_n)$.
\end{definition}

Each $a_i$ and $b_i$ are regarded as holonomies of a flat parabolic connection
along $A_i$ and $B_i$, respectively.
Note that we have a natural surjection
$\scN^{\frakg}(\RS) \to \scN^{G}(\RS)$ given by
\[
  (a_2, \dots, a_n, x_1,  \dots , x_n) \longmapsto
  (a_2, \dots, a_n, e^{x_1},  \dots , e^{x_n}).
\]
On the other hand, $\scN^{T}(\RS)$ is canonically embedded into 
$\scN^{\frakt}(\RS)$ by
\[
  (a_2, \dots, a_n, e^{x_1},  \dots , e^{x_n}) \longmapsto
  (a_2, \dots, a_n, x_1,  \dots , x_n).
\]

\begin{proposition}[{\cite[Propositions 2.11 and 2.12]
{Hurtubise-Jeffrey_RWFFPB}}]
The space $\scN^{G}(\RS)$ is diffeomorphic to $G^{2(n-1)}$ by
\[
 \scN^{G}(\RS) \to G^{2(n-1)},  \quad 
  (a_2, \dots, a_n, b_1, b_2, \dots, b_n) \longmapsto
  (a_2, \dots, a_n, b_2, \dots, b_n),
\]
and hence it is smooth.
On the other hand, 
$\scN^{\frakg}(\RS)$ is smooth outside the subset consisting of 
$(\bsa, \bsx)$ satisfying $e^{x_i}= -1$ for all $i$.
\end{proposition}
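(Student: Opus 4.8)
The plan is to treat the two assertions separately, the first being essentially immediate and the second an application of the regular value theorem. For the diffeomorphism $\scN^G(\RS)\cong G^{2(n-1)}$, I would simply observe that the defining relation
$$
 b_1 (a_2 b_2 a_2^{-1}) \cdots (a_n b_n a_n^{-1}) = 1
$$
solves uniquely for $b_1$ in terms of the remaining data, namely
$$
 b_1 = \lb (a_2 b_2 a_2^{-1}) \cdots (a_n b_n a_n^{-1}) \rb^{-1}.
$$
Hence the stated projection forgetting $b_1$ is a bijection onto $G^{2(n-1)}$, and its inverse — sending $(a_2,\dots,a_n,b_2,\dots,b_n)$ to the full tuple with $b_1$ given by the formula above — is manifestly smooth. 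This exhibits the projection as a diffeomorphism, so $\scN^G(\RS)$ is smooth.

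For the second assertion I would set
$$
 \Psi_{\frakg} : G^{n-1}\times\frakg^n \to G, \quad
 (\bsa,\bsx)\mapsto e^{x_1}(a_2 e^{x_2}a_2^{-1})\cdots(a_n e^{x_n}a_n^{-1}),
$$
so that $\scN^{\frakg}(\RS)=\Psi_{\frakg}^{-1}(1)$, and prove that $1$ is a regular value of $\Psi_{\frakg}$ at every point where $e^{x_i}\neq-1$ for some $i$. Writing $\Psi_G$ for the group-valued analogue on $G^{n-1}\times G^n$, we have $\Psi_{\frakg}=\Psi_G\circ(\mathrm{id}\times\exp^{\times n})$. The key point, which follows from the first assertion, is that $\Psi_G$ is already submersive through any single slot $b_i$: a direct computation shows that its $b_i$-partial derivative is, after left translation, the linear isomorphism $\nu\mapsto\Ad_{c_i}\nu$ of $\frakg\cong T_1G$ for a fixed $c_i\in G$ built from the $a_j$ and the trailing factors. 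Since $x_i$ enters $\Psi_{\frakg}$ only through $b_i=e^{x_i}$, the $x_i$-partial of $\Psi_{\frakg}$ is $\Ad_{c_i}\circ d\exp_{x_i}$; therefore $d\Psi_{\frakg}$ is surjective as soon as $d\exp_{x_i}$ is surjective for a single index $i$.

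It then remains to locate the degeneracy of $d\exp$. Using the standard formula $d\exp_x = dL_{e^x}\circ\beta(\ad_x)$ with $\beta(z)=(1-e^{-z})/z$, surjectivity of $d\exp_x$ is equivalent to invertibility of $\beta(\ad_x)$, that is, to $\ad_x$ having no nonzero eigenvalue in $2\pi\sqrt{-1}\,\bZ$. If $x$ is conjugate to $\alpha x_0$ with $\alpha\ge 0$, the nonzero eigenvalues of $\ad_x$ are $\pm 4\pi\sqrt{-1}\,\alpha$, so $\beta(\ad_x)$ is singular exactly when $2\alpha\in\bZ\setminus\{0\}$; in the parabolic weight range $\alpha\in[0,1/2]$ this occurs only at $\alpha=1/2$, i.e.\ at $e^x=-1$. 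Consequently, at any point of $\scN^{\frakg}(\RS)$ with $e^{x_i}\neq-1$ for some $i$, the map $d\exp_{x_i}$ is onto, hence so is $d\Psi_{\frakg}$, and $\scN^{\frakg}(\RS)$ is a smooth manifold near that point.

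The step I expect to be the main obstacle is this last one: producing the formula for $d\exp_x$ in terms of $\beta(\ad_x)$ and correctly pinning down its singular locus — in particular checking that within the alcove $\alcove$ the sole degeneration is $e^x=-1$, with $e^x=1$ forced to occur only at $x=0$, where $d\exp_0=\mathrm{id}$ is invertible. The reduction in the previous paragraph is designed precisely to isolate this as the one genuine computation: once $\Psi_G$ is known to be submersive through each individual slot, surjectivity of $d\Psi_{\frakg}$ is governed entirely by the exponential map in the Lie-algebra directions.
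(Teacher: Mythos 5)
Your argument for the first assertion is correct and complete, and is surely the intended one (the paper itself offers no proof here, quoting Propositions 2.11--2.12 of Hurtubise--Jeffrey): the defining relation expresses $b_1$ as a smooth function of the remaining variables, so $\scN^G(\RS)$ is the graph of a smooth map $G^{2(n-1)}\to G$ and the forgetful projection is a diffeomorphism. Your reduction of the second assertion is also correct as far as it goes: the $b_i$-partial of $\Psi_G$ is an isomorphism of $\frakg$, so $d\Psi_{\frakg}$ is surjective wherever some $d\exp_{x_i}$ is, and $d\exp_x$ fails to be surjective exactly when $\ad_x$ has a nonzero eigenvalue in $2\pi\sqrt{-1}\,\bZ$, i.e.\ when $x$ is conjugate to $\alpha x_0$ with $2\alpha\in\bZ_{>0}$.

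The gap is in the very last step, where you pass to the stated conclusion by restricting to ``the parabolic weight range $\alpha\in[0,1/2]$''. Nothing licenses that restriction: in the definition of $\scN^{\frakg}(\RS)$ the variables $\bsx$ range over all of $\frakg^n$ (the alcove condition $b_i\in\exp(\alcove)$ enters only in the definition of $\scN^T(\RS)$), and on all of $\frakg$ the degeneracy locus of $d\exp$ is the union of the spheres of $x$ conjugate to $\alpha x_0$ with $2\alpha\in\bZ_{>0}$. These spheres include loci where $e^x=+1$ (namely $x$ conjugate to $kx_0$, $k\in\bZ_{>0}$), not only $e^x=-1$; your parenthetical claim that ``$e^x=1$ is forced to occur only at $x=0$'' is exactly what fails. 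Hence your argument proves smoothness only where some $x_i$ avoids all of these spheres, a strictly smaller set than where some $e^{x_i}\neq-1$.

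Moreover the defect cannot be repaired within the regular-value framework. If every $x_i$ is a nonzero exponential preimage of $\pm1$, then every $b_i=e^{x_i}$ is central, so all $a_i$-derivatives of $\Psi_{\frakg}$ vanish and each $x_i$-derivative has rank one, leaving the image of $d\Psi_{\frakg}$ equal to $\sum_i\bR\,\Ad_{a_i}x_i$, which can be a single line. In fact such points can be honest singularities: for $n=3$, the point $(\bsa,\bsx)=(1,1,x_0,x_0,x_0)$ has $e^{x_i}=+1$ for all $i$, yet every neighborhood of it in $\scN^{\frakg}(\RS)$ contains both the closed $12$-dimensional set $G^2\times(\scO_1)^3$ (where $\scO_1$ is the adjoint orbit of $x_0$, on which the relation holds trivially) and regular points of $\Psi_{\frakg}$ outside that set, e.g.\ $(1,1,(1+\delta)x_0,(1+\delta)x_0,(1-2\delta)x_0)$ for small $\delta\neq0$; an invariance-of-domain argument then shows $\scN^{\frakg}(\RS)$ is not locally Euclidean at this point, even though not all $e^{x_i}$ equal $-1$. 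So the proposition is only correct under the reading --- implicit in Hurtubise--Jeffrey, and sufficient for every use made of it in this paper, since the Marle--Guillemin--Sternberg argument in the proof of Proposition \ref{prop:Jeffrey_EMS} is applied near $\bsx=\bszero$ --- that smoothness is asserted where some $d\exp_{x_i}$ is submersive, equivalently with the $x_i$ confined to the region swept out by the alcove, on which that condition and ``some $e^{x_i}\neq-1$'' coincide. With that hypothesis made explicit, your computation does close the argument.
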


The group $G^n$ acts on $\scN^{G}(\RS)$ and $\scN^{\frakg}(\RS)$ by
\begin{align*}
 \bssigma \cdot (\bsa, \bsb)
  &= (\sigma_1 a_2 \sigma_2^{-1}, \dots, \sigma_1 a_n \sigma_n^{-1}, 
     \sigma_1 b_1 \sigma_1^{-1}, \dots , \sigma_n b_n \sigma_n^{-1}), \\
 \bssigma \cdot (\bsa, \bsx)
  &= (\sigma_1 a_2 \sigma_2^{-1},  \dots, \sigma_1 a_n \sigma_n^{-1}, 
     \Ad_{\sigma_1} x_1,  \dots , \Ad_{\sigma_n} x_n),
\end{align*}
for $\bssigma = (\sigma_1, \dots, \sigma_n) \in G^n$. 
These actions induce $T^n$-actions 
\begin{align*}
  \bssigma \cdot (\bsa, \bsb)
  &= (\sigma_1 a_2 \sigma_2^{-1}, \dots, \sigma_1 a_n \sigma_n^{-1}, 
      b_1, \dots ,  b_n ), \\
  \bssigma \cdot (\bsa, \bsx)
  &= (\sigma_1 a_2 \sigma_2^{-1},  \dots, \sigma_1 a_n \sigma_n^{-1}, 
      x_1,  \dots ,  x_n)
\end{align*}
on $\scN^{T}(\RS)$ and $\scN^{\frakt}(\RS)$, respectively.

\begin{proposition}[\cite{Jeffrey_EMS}, 
{\cite[Proposition 2.14]{Hurtubise-Jeffrey_RWFFPB}}]
There exists a closed two-form on $\scN^{\frakg}(\RS)$
which is non-degenerate on an open dense subset,
and for which the map
\[
  \mu^{\frakg} : \scN^{\frakg} (\RS) \longrightarrow {\frakg}^n,
  \quad (\bsa,\bsx) \longmapsto - \bsx
  = ( -x_1, \dots, -x_n)
\]
is the moment map of the $G^n$-action. 
The symplectic reduction 
$\bigl(\mu^{\frakg}\bigr)^{-1} (\scO_{\bsalpha})/G^n$ is 
symplectomorphic to $\scN_{\bsalpha} (\RS)$.
\end{proposition}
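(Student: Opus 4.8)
The plan is to follow Jeffrey \cite{Jeffrey_EMS} and Hurtubise--Jeffrey \cite{Hurtubise-Jeffrey_RWFFPB} and regard $\scN^{\frakg}(\RS)$ as a finite-dimensional model for the space of flat parabolic connections. Write $\Phi(\bsa,\bsx) = e^{x_1}(a_2 e^{x_2} a_2^{-1})\cdots(a_n e^{x_n}a_n^{-1})$ for the holonomy-product map, so that $\scN^{\frakg}(\RS) = \Phi^{-1}(1)$. I would build the closed two-form $\varpi$ from elementary blocks attached to the punctures: each of the blocks $i = 2,\dots,n$ is $G\times\frakg\cong T^*G$ (left trivialization, coordinates $(a_i,x_i)$) carrying its canonical form $-d\langle x_i,\theta_i\rangle$, where $\theta_i = a_i^{-1}\,da_i$, while the first puncture contributes only the $\frakg$-factor for $x_1$; these are assembled by the fusion dictated by the word $\gamma_1 = [B_1]$, $\gamma_i = [A_i B_i A_i^{-1}]$ and restricted through $\Phi = 1$. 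Conceptually $\varpi$ is the pullback of the Atiyah--Bott--Goldman two-form $\int_\RS\langle A\wedge B\rangle$ under the holonomy parametrization, and its closedness follows because the Maurer--Cartan equation $d\theta_i = -\frac{1}{2}[\theta_i,\theta_i]$ forces the boundary contributions of the bulk integral to cancel; alternatively one checks $d\varpi = 0$ directly from the explicit formula using the same identity.

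Next I would verify the moment map identity. A direct computation gives $\Phi(\bssigma\cdot(\bsa,\bsx)) = \sigma_1\,\Phi(\bsa,\bsx)\,\sigma_1^{-1}$, so the $G^n$-action preserves $\scN^{\frakg}(\RS)$. To see that $\mu^{\frakg} = -\bsx$ generates it, I would contract $\varpi$ with the fundamental vector field $\xi^{\#}$ of $\xi = (\xi_1,\dots,\xi_n)\in\frakg^n$ and check $\iota_{\xi^{\#}}\varpi = d\langle\mu^{\frakg},\xi\rangle$, where $\langle\mu^{\frakg},\xi\rangle = -\sum_i\langle x_i,\xi_i\rangle$. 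Since the $i$-th factor of $G^n$ acts on the $i$-th block only by translation of $a_i$ and by $\Ad$ on $x_i$ (with $\sigma_1$ acting diagonally through all blocks), this reduces block by block to the standard fact that on $T^*G$ the lift of left and right translation has moment map given by $\mp$ the $\frakg$-coordinate, which yields $\mu_i = -x_i$.

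For non-degeneracy I would compute the rank of $\varpi$. Its kernel is controlled by the differential of $\exp$ at the points $x_i$, so $\varpi$ is symplectic precisely where $d\exp_{x_i}$ is invertible for every $i$; this is an open dense subset, complementary to the locus $\{e^{x_i} = -1\text{ for all }i\}$ singled out as the singular set in the previous proposition. Establishing the exact rank of $\varpi$ is the most delicate analytic point of the construction.

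Finally I would identify the reduction. On $(\mu^{\frakg})^{-1}(\scO_{\bsalpha}) = \{(\bsa,\bsx)\mid -x_i\in\scO_{\alpha_i}\}$ each $e^{x_i}$ lies in $\scC_{\alpha_i}$, so $p(\bsa,\bsx) := (e^{x_1}, a_2 e^{x_2}a_2^{-1},\dots,a_n e^{x_n}a_n^{-1})$ lands in $\widetilde{\scN}_{\bsalpha} = \{\bsg\in\scC_{\bsalpha}\mid g_1\cdots g_n = 1\}$, using $\Phi = 1$. The computation above shows that $p$ is invariant under $H = \{1\}\times G^{n-1}\subset G^n$ and intertwines the residual $G\cong G^n/H$ with diagonal conjugation on $\widetilde{\scN}_{\bsalpha}$, so $p$ descends to a bijection $(\mu^{\frakg})^{-1}(\scO_{\bsalpha})/G^n \overset{\sim}{\longrightarrow}\widetilde{\scN}_{\bsalpha}/G = \scNa(\RS)$. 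To upgrade this to a symplectomorphism I would use reduction in stages together with the shifting trick: reducing at the orbits $\scO_{\alpha_i}$ rather than at $0$ adds the Kostant--Kirillov forms $\omega_{\scO_{\alpha_i}}$, which under $g_i = a_i e^{x_i}a_i^{-1}$ reproduce the boundary terms $\frac{1}{2}\sum_i(\langle\xi_i,\Ad_{\gamma_i}\eta_i\rangle - \langle\eta_i,\Ad_{\gamma_i}\xi_i\rangle)$, while the bulk Atiyah--Bott term descends to the cup-product term $(u\cup v)([\pi_1,\partial\pi_1])$; matching this with \eqref{eq:symp_par} finishes the proof. The main obstacle is exactly this last comparison of the reduced form with the Guruprasad--Huebschmann--Jeffrey--Weinstein formula, since the two are produced by a priori different recipes and the bookkeeping of which contribution lands where (especially for the first puncture) must be carried out with care.
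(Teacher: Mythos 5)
The paper offers no proof of this proposition: it is quoted with attribution from Jeffrey \cite{Jeffrey_EMS} and Hurtubise--Jeffrey \cite[Proposition 2.14]{Hurtubise-Jeffrey_RWFFPB}, and the text moves on immediately after the statement. So your attempt can only be judged on its own merits, and there it has a genuine gap at its foundation: the construction of the two-form.

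The form you propose --- the restriction to $\Phi^{-1}(1)$ of the sum $\sum_{i=2}^n -d\langle x_i, a_i^{-1}da_i\rangle$ of canonical $T^*G$-forms, with the first puncture contributing nothing but the coordinate $x_1$ --- does not satisfy the moment map identity you then set out to verify. For the factors $G_i$ with $i \ge 2$, acting by $a_i \mapsto a_i \sigma_i^{-1}$, $x_i \mapsto \Ad_{\sigma_i} x_i$, the block-by-block computation does give $-x_i$. But the first factor acts by left translation $a_i \mapsto \sigma_1 a_i$ on \emph{every} block and by $\Ad_{\sigma_1}$ on $x_1$, and for your form its Hamiltonian is the sum of left-translation moment maps $\sum_{i=2}^n \Ad_{a_i} x_i$, not $-x_1$. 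These two agree only to first order in $\bsx$: on the constraint set one has $-x_1 = \log \bigl( (a_2 e^{x_2} a_2^{-1}) \cdots (a_n e^{x_n} a_n^{-1}) \bigr)$, which by Baker--Campbell--Hausdorff differs from $\sum_{i=2}^n \Ad_{a_i} x_i$ by commutator terms, since the constraint is multiplicative rather than additive. Hence with your form $\mu^{\frakg} = -\bsx$ is not a moment map, and the later steps collapse. The actual two-form of Jeffrey and Hurtubise--Jeffrey contains precisely the cross terms between blocks that repair this; equivalently one can obtain it by pulling back the quasi-Hamiltonian form \eqref{eq:qHam_2-form} on $\scN^G(\RS)$ of Section~\ref{sc:extended_moduli} along the exponential and adding a primitive of $\exp^* \chi$ on each $\frakg$-factor. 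Your appeal to ``fusion'' cannot produce these terms, because fusion of honest Hamiltonian $G$-spaces (which is what your $T^*G$ blocks are) is just the product with the sum of moment maps and introduces no correction; the AMM correction terms exist only for group-valued moment maps, which is the $\scN^G$ picture, not the $\scN^{\frakg}$ one. Finally, the two points you yourself flag as open --- the rank computation and the comparison of the reduced form with the Guruprasad--Huebschmann--Jeffrey--Weinstein formula \eqref{eq:symp_par} --- are not side issues: the latter is the entire content of the proposition's final claim, so even granting the corrected two-form, the sketch does not yet constitute a proof.
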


On the other hand, $\scN^G(\RS)$ admits a structure of 
quasi-Hamiltonian $G^n$-space.
We briefly recall the notion of quasi-Hamiltonian spaces
introduced by Alekseev, Malkin and Meinrenken 
\cite{Alekseev-Malkin-Meinrenken_LGVMM}.

Given a compact connected Lie group $K$ with an invariant inner product
$\langle \, , \, \rangle$ on the Lie algebra $\frakk$, 
let $\theta$ (resp. $\overline{\theta}$) be the left-invariant 
(resp. right-invariant) Maurer-Cartan form,
and let
\[
  \chi = \frac 1{12} \langle \theta, [\theta, \theta] \rangle
  = \frac 1{12} \langle \overline{\theta}, 
  [\overline{\theta}, \overline{\theta}] \rangle
\]
be the canonical bi-invariant 3-form on $K$.

\begin{definition}[{Alekseev, Malkin, and Meinrenken 
\cite[Definition 2.2]{Alekseev-Malkin-Meinrenken_LGVMM}}]
  A {\it quasi-Hamiltonian $K$-space} $M = (M, \omega, \mu)$ is 
  a $K$-manifold $M$ equipped with a $K$-invariant 2-form $\omega$
  and $K$-equivariant map $\mu : M \to K$ such that
  \begin{enumerate}
    \item $d \omega = - \mu^* \chi$,
    \item $\iota(v_{\xi}) \omega = 
          (1/2) \mu^*(\theta + \overline{\theta})$ 
          for each $\xi \in \frakk$, where $v_{\xi}$ is 
          the vector field on $M$ given by the infinitesimal 
          action of $\xi$, and
    \item $\ker \omega_x = \{ v_{\xi}(x) \, | \, \xi \in 
           \ker( \Ad_{\mu(x)} + 1) \}$ for each $x \in M$.
  \end{enumerate}
We call $\mu: M \to K$ the {\it $K$-valued moment map},
or simply the {\it moment map}.
\end{definition}

\begin{example}[The double {\cite[Remark 3.2]{Alekseev-Malkin-Meinrenken_LGVMM}}]
Let $D(G) = G \times G$, and define a $G^2$-action on $D(G)$ by
\[
  (\sigma_1, \sigma_2) \cdot (a, b) := 
  (\sigma_1 a \sigma_2^{-1}, \Ad_{\sigma_2} b)
\]
for $(a,b) \in D(G)$ and $(\sigma_1, \sigma_2) \in G^2$.
Then $D(G)$ is a quasi-Hamiltonian $G^2$-space with the 2-form
\[
  \omega_D = \frac 12 \langle \Ad_b a^* \theta, a^* \theta \rangle
  + \frac 12 \langle a^* \theta, b^* (\theta + \overline{\theta}) \rangle
\]
and the moment map
\[
  \mu = (\mu_1, \mu_2) : D(G) \longrightarrow G^2,
  \quad (a,b) \longmapsto (\Ad_a b, b^{-1}).
\]
\end{example}

\begin{theorem}[{Fusion product
\cite[Theorem 6.1]{Alekseev-Malkin-Meinrenken_LGVMM}}]
Let $(M, \omega, \mu = (\mu_1, \mu_2, \mu_3))$ be a quasi-Hamiltonian 
$K \times K \times H$-space, and consider the diagonal embedding 
$K \times H \hookrightarrow K \times K \times H$, $(k,h) \mapsto (k,k,h)$.
Then $M$ is a quasi-Hamiltonian $K \times H$-space with the 2-form
\[
  \widetilde{\omega} = \omega + 
  \frac 12 \langle \mu_1^* \theta, \mu_2^* \overline{\theta} \rangle
\]
and the moment map
\[
  \widetilde{\mu} = (\mu_1 \cdot \mu_2, \mu_3) 
  : M \longrightarrow K \times H.
\]
\end{theorem}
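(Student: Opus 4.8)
The plan is to verify the three defining axioms of a quasi-Hamiltonian space directly for the triple $(M, \widetilde{\omega}, \widetilde{\mu})$, now viewing $M$ as a $K \times H$-space through the diagonal embedding. The only differences from the given $K \times K \times H$-structure are that the two $K$-factors are identified and that $\omega$ is corrected by $\varpi := \tfrac12 \langle \mu_1^*\theta, \mu_2^*\overline{\theta}\rangle$. Everything rests on two elementary Maurer--Cartan computations for the pointwise product $\mu = \mu_1 \cdot \mu_2$,
\[
  \mu^*\theta = \Ad_{\mu_2^{-1}}\mu_1^*\theta + \mu_2^*\theta,
  \qquad
  \mu^*\overline{\theta} = \mu_1^*\overline{\theta} + \Ad_{\mu_1}\mu_2^*\overline{\theta},
\]
together with the key identity for the Cartan $3$-form
\[
  (\mu_1 \cdot \mu_2)^*\chi
  = \mu_1^*\chi + \mu_2^*\chi - \tfrac12\, d\langle \mu_1^*\theta, \mu_2^*\overline{\theta}\rangle,
\]
obtained by expanding $\chi = \tfrac1{12}\langle\theta,[\theta,\theta]\rangle$ with the formulas above and invoking invariance of $\langle\,,\,\rangle$. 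I would isolate these as a preliminary lemma.

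Axiom (i) is then immediate. The canonical $3$-form on $K \times K \times H$ pulls back under $\mu = (\mu_1,\mu_2,\mu_3)$ to $\mu_1^*\chi + \mu_2^*\chi + \mu_3^*\chi$, so $d\omega = -(\mu_1^*\chi + \mu_2^*\chi + \mu_3^*\chi)$; adding $d\varpi = \tfrac12 d\langle\mu_1^*\theta,\mu_2^*\overline{\theta}\rangle$ and using the $3$-form identity collapses the first two summands into $(\mu_1\mu_2)^*\chi$, yielding $d\widetilde{\omega} = -\bigl((\mu_1\mu_2)^*\chi + \mu_3^*\chi\bigr) = -\widetilde{\mu}^*\chi$.

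For axiom (ii) I would treat the two blocks of $\frakk \oplus \mathfrak{h}$ separately. Along $H$ nothing is altered, so the condition for $\eta \in \mathfrak{h}$ is inherited verbatim from the third factor. For $\xi$ in the diagonal $\frakk$, the fundamental vector field splits as $v_\xi = v_\xi^{(1)} + v_\xi^{(2)}$, and axiom (ii) for the original structure gives the contribution of $\iota(v_\xi)\omega$ as $\tfrac12\bigl(\mu_1^*(\theta+\overline{\theta}) + \mu_2^*(\theta+\overline{\theta})\bigr)$ paired with $\xi$. The remaining term $\iota(v_\xi)\varpi$ is evaluated from the equivariance of $\mu_1^*\theta$ and $\mu_2^*\overline{\theta}$; substituting the Maurer--Cartan multiplication formulas, all four pieces recombine into $\tfrac12(\mu_1\mu_2)^*(\theta+\overline{\theta}) = \tfrac12\,\widetilde{\mu}^*(\theta+\overline{\theta})$. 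This is a bookkeeping calculation with $\Ad_{\mu_1}$ and $\Ad_{\mu_2}$ that I expect to go through cleanly.

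The genuine work is axiom (iii), the minimal-degeneracy condition $\ker\widetilde{\omega}_x = \{v_\xi(x) \mid \xi \in \ker(\Ad_{\widetilde{\mu}(x)}+1)\}$. The hypothesis presents $\ker\omega_x$ as the span of the fundamental vectors $v_\zeta(x)$ with $\zeta$ in $\ker(\Ad_{\mu_1}+1)\oplus\ker(\Ad_{\mu_2}+1)\oplus\ker(\Ad_{\mu_3}+1)$, whereas the conclusion demands that $\ker\widetilde{\omega}_x$ be spanned by the \emph{diagonal} fundamental vectors with $\xi \in \ker(\Ad_{\mu_1\mu_2}+1)\oplus\ker(\Ad_{\mu_3}+1)$. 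Since the $\frakk$-ranks of these subspaces generically differ, the correction $\varpi$ must modify the kernel by exactly the right amount, and this is where the fusion mechanism is decisive. I would prove it by restricting $\widetilde{\omega}$ to the finite-dimensional span of all $v_\xi^{(1)}(x), v_\xi^{(2)}(x)$ and $v_\eta(x)$, thereby reducing (iii) to a linear-algebra identity among $\Ad_{\mu_1}$, $\Ad_{\mu_2}$ and the pairing introduced by $\varpi$: one checks that a diagonal vector $v_\xi(x)$ lies in $\ker\widetilde{\omega}_x$ iff $\Ad_{\mu_1\mu_2}\xi = -\xi$, and that the correction term kills precisely the spurious anti-diagonal directions so that no further kernel appears. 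Carrying out this kernel matching is the main obstacle of the proof.
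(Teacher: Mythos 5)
This theorem is nowhere proved in the paper you are working from: it is recalled verbatim, with citation, from \cite[Theorem 6.1]{Alekseev-Malkin-Meinrenken_LGVMM}, so your attempt can only be measured against the original argument of Alekseev, Malkin, and Meinrenken, which is indeed a direct verification of the three axioms along the lines you set up. Your preliminary lemma is correct: the two Maurer--Cartan product formulas are elementary, and the identity
\[
 (\mu_1\cdot\mu_2)^*\chi
  = \mu_1^*\chi + \mu_2^*\chi
    - \frac{1}{2}\, d\langle \mu_1^*\theta, \mu_2^*\overline{\theta}\rangle
\]
is exactly the property of the Cartan $3$-form that drives fusion. Your verifications of axioms (i) and (ii) are also correct; in (ii) the four terms do recombine into $\frac{1}{2}\langle\widetilde{\mu}^*(\theta+\overline{\theta}),\xi\rangle$ after using invariance of the pairing and equivariance of $\mu_1$, $\mu_2$.

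The genuine gap is in axiom (iii), and it is not merely that the computation is left undone: the mechanism you propose would fail. Restricting $\widetilde{\omega}$ to the span of the fundamental vectors $v_\xi^{(1)}(x)$, $v_\xi^{(2)}(x)$, $v_\eta(x)$ only detects pairings among orbit directions, whereas the required inclusion $\ker\widetilde{\omega}_x \subseteq \{v_\xi(x) \mid \xi \in \ker(\Ad_{\widetilde{\mu}(x)}+1)\}$ quantifies over all of $T_xM$: an element of $\ker\widetilde{\omega}_x$ has no a priori reason to lie among the orbit directions, and no linear-algebra identity on the restricted form can exclude kernel vectors transverse to them. Your criterion is also misdirected on both halves: the inclusion $\supseteq$ costs nothing once (ii) is established, since for $\xi \in \ker(\Ad_{\widetilde{\mu}(x)}+1)$ and any $w \in T_xM$ one has
\[
 \bigl(\iota(v_\xi)\widetilde{\omega}\bigr)(w)
  = \frac{1}{2}\Bigl\langle \bigl(\Ad_{\widetilde{\mu}(x)^{-1}}+1\bigr)
     \overline{\theta}\bigl(d\widetilde{\mu}(w)\bigr), \xi \Bigr\rangle
  = \frac{1}{2}\Bigl\langle \overline{\theta}\bigl(d\widetilde{\mu}(w)\bigr),
     \bigl(\Ad_{\widetilde{\mu}(x)}+1\bigr)\xi \Bigr\rangle
  = 0,
\]
while the ``only if'' half of your equivalence is false in the presence of stabilizers, where $v_\xi(x)=0$ lies in the kernel for arbitrary $\xi$. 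The actual content of (iii) is the reverse inclusion for an \emph{arbitrary} kernel element: taking $w \in \ker\widetilde{\omega}_x$, one records the identity of $1$-forms $\iota(w)\omega = -\iota(w)\varpi$, where $\varpi = \frac{1}{2}\langle\mu_1^*\theta, \mu_2^*\overline{\theta}\rangle$; its right-hand side is built from $\mu_1^*\theta$ and $\mu_2^*\overline{\theta}$ separately, rather than from the combinations $\mu_i^*(\theta+\overline{\theta})$ that axiom (ii) controls, and one must combine this with equivariance of $\mu_1$, $\mu_2$ and with axioms (ii) and (iii) of the original $K\times K\times H$-structure to force $w$ to be a diagonal fundamental vector with $\xi \in \ker(\Ad_{\mu_1\mu_2}+1)\oplus\ker(\Ad_{\mu_3}+1)$. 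That reduction from the full tangent space to the orbit directions is precisely the step your proposal does not supply.
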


The product $M_1 \times M_2$ of quasi-Hamiltonian $K \times H_j$-spaces 
$M_j$ ($j =1,2$) is a quasi-Hamiltonian 
$K \times H_1 \times K \times H_2$-space.
The fusion product $M_1 \circledast M_2$ is a 
quasi-Hamiltonian $K \times H_1 \times H_2$-space 
obtained from $M_1 \times M_2$ by fusing $K$-factors.
Note that the fusion product is associative:
\[
  (M_1 \circledast M_2) \circledast M_3 = 
  M_1 \circledast (M_2 \circledast M_3).
\]

We consider $n-1$ copies of double
$D_i = (D(G), \omega_{D_i}, \mu_i)$ ($i = 2, \dots, n$)
with moment map
\[
  \mu_i = (\mu_{i,1}, \mu_{i,2}) \colon D(G)  \longrightarrow
  G^2, \quad 
  (a_i, b_i) \longmapsto (\Ad_{a_i} b_i, b_i^{-1}).
\]
Then the fusion product 
$D(G)^{\circledast (n-1)} = D_2 \circledast \dots \circledast D_n$
given by fusing first $G$-factors is isomorphic to $\scN^G(\RS)$
as a $G^n$-manifold, and hence it defines a structure of 
quasi-Hamiltonian $G^n$-space on $\scN^G(\RS)$.
Since
\begin{align} \label{eq:b1}
  b_1^{-1} = (\Ad_{a_2} b_2) \dots (\Ad_{a_n} b_n)
           = \mu_{2,1} \cdot \mu_{3,1} \cdots \mu_{n,1}
\end{align}
is a component of the moment map on 
$D(G)^{\circledast (n-1)} \cong \scN^G(\RS) $, 
we have the following.

\begin{theorem}[{\cite[Section 9]{Alekseev-Malkin-Meinrenken_LGVMM}}] \label{th:Na_red}
There exists a structure of quasi-Hamiltonian $G^n$-space
on $\scN^G (\RS)$ such that 
\[
  \mu^G : \scN^G (\RS) \longrightarrow G^n,
  \quad (\bsa,\bsb) \longmapsto \bsb^{-1} 
  = (b_1^{-1}, \dots, b_n^{-1})
\]
is the moment map.
The quasi-Hamiltonian reduction
$\bigl(\mu^{G}\bigr)^{-1} (\scC_{\bsalpha})/G^n$
is symplectomorphic to $\scN_{\bsalpha} (\RS)$.
\end{theorem}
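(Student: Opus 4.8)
The plan is to construct the quasi-Hamiltonian structure on $\scN^G(\RS)$ as an iterated fusion product of doubles, read the moment map off the fusion product theorem, and then identify the reduction with $\scNa(\RS)$ by an explicit change of variables, verifying at the end that the reduced two-form agrees with $\omega_{\scNa}$ of \eqref{eq:symp_par}.

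For the first assertion, the preceding discussion identifies $\scN^G(\RS)$ with the iterated fusion product $D_2 \circledast \cdots \circledast D_n$ obtained by fusing the first $G$-factor of each double $D_i = (D(G), \omega_{D_i}, \mu_i)$, where $\mu_i = (\Ad_{a_i} b_i,\, b_i^{-1})$. By associativity of the fusion product and repeated application of the fusion product theorem, $\scN^G(\RS)$ is a quasi-Hamiltonian $G^n$-space whose moment map has, as its fused first component, the product $\mu_{2,1} \cdots \mu_{n,1} = (\Ad_{a_2} b_2) \cdots (\Ad_{a_n} b_n)$, and as its remaining components the untouched factors $\mu_{i,2} = b_i^{-1}$ for $i = 2, \dots, n$. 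By \eqref{eq:b1} the fused component equals $b_1^{-1}$, so the total moment map is $(\bsa, \bsb) \mapsto (b_1^{-1}, \dots, b_n^{-1}) = \bsb^{-1}$, as claimed.

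For the reduction, each $\scC_{\alpha_i}$ is stable under inversion, since the inverse of $e^{\alpha_i x_0}$ is conjugate to it by a Weyl group representative; hence the level set $\bigl(\mu^G\bigr)^{-1}(\scC_{\bsalpha})$ consists of those $(\bsa,\bsb)$ with $b_i \in \scC_{\alpha_i}$ for all $i$. I would then introduce the map
\[
  \Psi \colon \bigl(\mu^G\bigr)^{-1}(\scC_{\bsalpha}) \longrightarrow
   \{ \bsg \in \scC_{\bsalpha} \mid g_1 \cdots g_n = 1 \}, \quad
  (\bsa, \bsb) \longmapsto (b_1, \Ad_{a_2} b_2, \dots, \Ad_{a_n} b_n).
\]
The defining relation $b_1 (a_2 b_2 a_2^{-1}) \cdots (a_n b_n a_n^{-1}) = 1$ of $\scN^G(\RS)$ shows the image satisfies $g_1 \cdots g_n = 1$, and each $g_i = \Ad_{a_i} b_i$ lies in $\scC_{\alpha_i}$. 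A direct check against the $G^n$-action shows that every $g_i$ transforms by conjugation by $\sigma_1$ alone, so $\Psi$ is equivariant for the diagonal $G$-action on the target and descends to the quotients; the slice $a_2 = \dots = a_n = 1$, $b_i = g_i$ furnishes an inverse on $G^n$-orbits, and both sides have dimension $2(n-3)$, so $\Psi$ induces a diffeomorphism $\bigl(\mu^G\bigr)^{-1}(\scC_{\bsalpha})/G^n \cong \scNa(\RS)$.

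The main obstacle is the final step, matching the symplectic forms. By the quasi-Hamiltonian reduction theorem of Alekseev, Malkin, and Meinrenken \cite{Alekseev-Malkin-Meinrenken_LGVMM}, the reduction carries a canonical symplectic form obtained by restricting the fusion product two-form to the level set and descending. To identify it with $\omega_{\scNa}$ I would pull the fusion two-form back along $\Psi^{-1}$, parametrize tangent vectors by the parabolic cocycles of \eqref{eq:cocycle1}--\eqref{eq:cocycle2}, and verify that the resulting pairing reproduces the cup-product term together with the boundary correction $\tfrac12 \sum_i (\langle \xi_i, \Ad_{\gamma_i}\eta_i\rangle - \langle \eta_i, \Ad_{\gamma_i}\xi_i\rangle)$ of \eqref{eq:symp_par}. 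The delicate point is bookkeeping the successive fusion corrections $\tfrac12\langle \mu_1^*\theta, \mu_2^*\overline{\theta}\rangle$ across all $n-1$ doubles and seeing that, after reduction, they collapse precisely onto the Guruprasad--Huebschmann--Jeffrey--Weinstein cocycle; this is the computation carried out in \cite[Section 9]{Alekseev-Malkin-Meinrenken_LGVMM}.
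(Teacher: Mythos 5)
Your proposal is correct and takes essentially the same route as the paper: the paper likewise obtains the quasi-Hamiltonian structure by identifying $\scN^G(\RS)$ with the fusion product $D_2 \circledast \dots \circledast D_n$ of doubles, reads off the moment map $(\bsa,\bsb) \mapsto \bsb^{-1}$ from \eqref{eq:b1}, and defers the symplectomorphism of the reduction with $\scN_{\bsalpha}(\RS)$ to \cite[Section 9]{Alekseev-Malkin-Meinrenken_LGVMM}. Your explicit map $\Psi$ and the slice $a_2 = \dots = a_n = 1$ make the set-level identification of the quotient more concrete than the paper's treatment, but the essential step you leave to the cited reference (matching the reduced two-form with the Guruprasad--Huebschmann--Jeffrey--Weinstein form) is exactly the step the paper also cites rather than proves.
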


\begin{remark}
Treloar \cite{MR1880958} also shows this fact,
and describes the Goldman system as bending Hamiltonians 
on the moduli space of $n$-gons in $S^3 \cong SU(2)$.
\end{remark}

Set $\mu_{\le i} = \mu_{2,1} \cdot \mu_{3,1} \dots \mu_{i,1}$
for simplicity.
Then the 2-form $\omega_{\scN^G(\RS)}$ on $\scN^G(\RS)$ is
given by
\begin{align}
  \omega_{\scN^G(\RS)}
  &= \sum_{i=2}^n \omega_{D_i} + \frac 12 \sum_{i=3}^{n}
     \langle (\mu_{\le i-1})^* \theta , (\mu_{i,1})^* \overline{\theta}
     \rangle  \notag \\
  &= \sum_{i=2}^n \omega_{D_i} + \frac 12 \sum_{i=3}^{n}
     \langle \Ad_{\mu_{i,1}^{-1}} (\mu_{\le i-1})^* \theta ,
     \Ad_{\mu_{i,1}^{-1}} (\mu_{i,1})^* \overline{\theta}
     \rangle  \notag \\
  &= \sum_{i=2}^n \omega_{D_i} + \frac 12 \sum_{i=3}^{n}
     \langle (\mu_{\le i})^* \theta , (\mu_{i,1})^* \theta
     \rangle \notag \\
  &= \sum_{i=2}^n \left( \omega_{D_i} + \frac 12
     \langle (\mu_{\le i})^* \theta , (\mu_{i,1})^* \theta
     \rangle \right).  \label{eq:qHam_2-form}
\end{align}
Here, we have used
\begin{align*}
 \Ad_{\mu_{i,1}^{-1}} \left[ (\mu_{\le i-1})^* \theta \right]
  &= (\mu_{\le i})^* \theta - (\mu_{i,1})^* \theta, \\
 \Ad_{\mu_{i,1}^{-1}} (\mu_{i,1})^* \overline{\theta}
  &= (\mu_{i,1})^* \theta, \\
 \la (\mu_{i,1})^* \theta, (\mu_{i,1})^* \theta \ra &= 0,
\end{align*}
which follow from
\begin{align}
 g^{-1} (h^{-1} d h) g
  &= (hg)^{-1} d(hg) - g^{-1} d g, \\
 g^{-1}((dg)g^{-1})g &= g^{-1} d g,
\end{align}
and the fact that pairing $\la -, - \ra$ is symmetric
and $\theta$ is a one-form.

\section{Walls and quasi-Hamiltonian reductions} 
\label{sc:wall_extended_moduli}

Recall that walls in the space of parabolic weights are given by
\[
  H_{I, k} = \biggl\{ \bsalpha \in [0, 1/2)^n \, \biggm| \,
  \sum_{j \in J} \alpha_j - \sum_{i \in I} \alpha_i = k \biggr\}
\]
for $I \subset \{1, \dots, n \}$,
$J = \{1, \dots, n \} \setminus I$, and $k \in \bZ$.
We define $\bsepsilon = (\epsilon_1, \ldots, \epsilon_n)$ as
\begin{align} \label{eq:epsilon}
 \epsilon_i =
\begin{cases}
 1 & i \in J, \\
 -1 & i \in I,
\end{cases}
\end{align}
so that $\sum_{i=1}^n \epsilon_i \alpha_i = k$.

\begin{lemma} \label{lm:wall1}
A parabolic weight $\alpha \in [0,1)^n$ lies on a wall
if and only if $\cC_{\bsalpha}$ contains $\bsg = (g_1, \dots, g_n)$ 
such that $g_1, \dots, g_n$ lie on a common maximal torus 
and satisfy $g_1 \dots g_n = 1$.
\end{lemma}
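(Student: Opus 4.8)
The plan is to transport the whole question onto the maximal torus $T$, where multiplication is additive and the relation $g_1 \cdots g_n = 1$ becomes a single linear congruence on the weights. Two elementary facts drive the argument. First, since all maximal tori of $G = SU(2)$ are conjugate, any tuple $(g_1, \dots, g_n)$ lying on a common maximal torus $T' = hTh^{-1}$ may be moved into $T$ by simultaneous conjugation by $h^{-1}$; this operation preserves each membership $g_i \in \scC_{\alpha_i}$ and the product relation, so there is no loss in assuming $g_i \in T$ from the outset. Second, an element $\diag(e^{\sqrt{-1}\phi}, e^{-\sqrt{-1}\phi}) \in T$ lies in $\scC_{\alpha_i}$ if and only if it has the same eigenvalues as $e^{\alpha_i x_0}$, i.e.\ $\phi \equiv \pm 2\pi \alpha_i \pmod{2\pi}$; hence $\scC_{\alpha_i} \cap T$ consists of exactly the two points $\diag(e^{\pm 2\pi\sqrt{-1}\alpha_i}, e^{\mp 2\pi\sqrt{-1}\alpha_i})$, interchanged by the nontrivial Weyl element.

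For the forward implication I would first suppose such a $\bsg$ exists. After conjugating into $T$, the second fact lets me write $g_i = \diag(e^{2\pi\sqrt{-1}\epsilon_i\alpha_i}, e^{-2\pi\sqrt{-1}\epsilon_i\alpha_i})$ with signs $\epsilon_i \in \{ \pm 1 \}$. As $T$ is abelian, the upper-left entry of $g_1 \cdots g_n = 1$ reads $e^{2\pi\sqrt{-1}\sum_i \epsilon_i\alpha_i} = 1$, that is $\sum_i \epsilon_i \alpha_i \in \bZ$. Taking $I = \{ i \mid \epsilon_i = -1 \}$ and $J$ its complement, this is exactly $\sum_{j \in J}\alpha_j - \sum_{i \in I}\alpha_i = k$ for some $k \in \bZ$, so $\bsalpha$ lies on $H_{I,k}$. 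Conversely, starting from $\bsalpha \in H_{I,k}$, I would read off $\bsepsilon$ from \eqref{eq:epsilon} and set $g_i = \diag(e^{2\pi\sqrt{-1}\epsilon_i\alpha_i}, e^{-2\pi\sqrt{-1}\epsilon_i\alpha_i}) \in T$; the second fact guarantees $g_i \in \scC_{\alpha_i}$, while the wall equation gives $\sum_i \epsilon_i \alpha_i = k \in \bZ$ and therefore $g_1 \cdots g_n = 1$, exhibiting the desired $\bsg$ on the single torus $T$.

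The computation is essentially a bookkeeping exercise, so I do not anticipate a serious obstacle; the only points requiring care are the degenerate weights $\alpha_i \in \{ 0, 1/2 \}$, where $\scC_{\alpha_i}$ collapses to a single central element $\pm\mathrm{Id}$ and the two choices of $\epsilon_i$ coincide. There the argument is unaffected, because both signs produce the same $g_i$ and because $\epsilon_i\alpha_i \equiv -\epsilon_i\alpha_i \pmod{\bZ}$ in these cases, so the congruence $\sum_i \epsilon_i\alpha_i \in \bZ$ is insensitive to the choice. The remaining care is purely notational: keeping the identification of the sign vector $\bsepsilon$ with the partition $(I,J)$ consistent with the convention fixed in \eqref{eq:epsilon}.
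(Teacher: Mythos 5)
Your proof is correct and takes essentially the same approach as the paper's: simultaneously diagonalize the $g_i$ into $T$, write $g_i = \exp(\epsilon_i \alpha_i x_0)$ for signs $\epsilon_i \in \{\pm 1\}$, read the product relation as the wall equation $\sum_i \epsilon_i \alpha_i \in \bZ$, and for the converse construct exactly this diagonal tuple from the sign vector of the wall. Your explicit treatment of the degenerate values $\alpha_i \in \{0, 1/2\}$ is a minor refinement that the paper leaves implicit.
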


\begin{proof}
If $\cC_{\bsalpha}$ contains $\bsg = (g_1, \dots, g_n)$ 
such that $g_1, \ldots, g_n$ lie on a common maximal torus 
and satisfy $g_1 \cdots g_n = 1$,
then one can simultaneously diagonalize $g_1, \ldots, g_n$
so that
$g_i = \exp(\epsilon_i \alpha_i x_0)$
for some $\bsepsilon =(\epsilon_1,\ldots,\epsilon_n) \in \{ \pm 1\}^n$.
Then
$
 g_1 \cdots g_n
  = \exp[(\epsilon_1 \alpha_1+\cdots+\epsilon_n \alpha_n) x_0]
  = 1
$
implies
$\epsilon_1 \alpha_1+\cdots+\epsilon_n \alpha_n \in \bZ$,
so that $\bsalpha$ is on the wall
defined by $\bsepsilon$.

Conversely, if $\bsalpha$ satisfies
$\epsilon_1 \alpha_1+\cdots+\epsilon_n \alpha_n \in \bZ$
for some $\bsepsilon \in \{ \pm 1 \}^n$,
then $(g_i = \exp(\epsilon_i \alpha_i x_0))_{i=1}^n$
gives an element of $\cC_{\bsalpha}$
contained in the same maximal torus
satisfying $g_1 \cdots g_n = 1$.
\end{proof}

Since $\cN_\bsalpha(\RS)$
is described as the quasi-Hamiltonian reduction
$\lb \mu^G \rb^{-1}(\cC_\bsalpha)/G^n$
by \pref{th:Na_red},
there are two ways
for $\cN_\bsalpha(\RS)$ to be singular.
One way is for $\mu^G$ to have a critical point.

\begin{proposition} \label{pr:crit_mu}
The critical point set of $\mu^G$ consists of 
$(\bsa, \bsb) \in \scN^G(\RS)$ such that
$b_1, a_2b_2a_2^{-1}, \dots, a_n b_n a_n^{-1}$
lie on a common maximal torus.
\end{proposition}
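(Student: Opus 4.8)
The plan is to compute the rank of $d\mu^G$ directly, using the diffeomorphism $\scN^G(\RS)\cong G^{2(n-1)}$ recalled above, under which a point is recorded by the coordinates $(a_2,\dots,a_n,b_2,\dots,b_n)$ while $b_1$ is the determined element $b_1=(c_2\cdots c_n)^{-1}$. Here I abbreviate $c_1=b_1$ and $c_i=a_ib_ia_i^{-1}$ for $i\ge 2$, so that $c_1\cdots c_n=1$; note that these $c_i$ are exactly the holonomies $\rho(\gamma_i)$, so the statement amounts to saying that the critical locus consists of reducible representations (compare \pref{lm:wall1}). In the chosen coordinates $\mu^G=(b_1^{-1},\dots,b_n^{-1})$, and for $i\ge 2$ the component $b_i^{-1}$ depends only on $b_i$, with differential an isomorphism onto the $i$-th factor. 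These components are jointly a submersion whose kernel is precisely the subspace of variations with $db_2=\dots=db_n=0$, i.e. variations of the $a_i$ alone. Hence $d\mu^G$ is surjective if and only if its first component restricts to a surjection on that kernel, and I would first reduce the problem to showing that the map $\Psi(a_2,\dots,a_n)=c_2\cdots c_n$ (with the $b_i$ frozen) has surjective differential off the asserted locus.

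Next I would compute $d\Psi$ in the right trivialization $T_PG\cong\frakg$. Writing a variation as $\xi_i=\delta a_i\,a_i^{-1}\in\frakg$, one finds $\delta c_i\,c_i^{-1}=(1-\Ad_{c_i})\xi_i$, and the Leibniz rule for the right logarithmic derivative of $P=c_2\cdots c_n$ gives
$$ \delta P\,P^{-1}\;=\;\sum_{i=2}^{n}\Ad_{P_i}(1-\Ad_{c_i})\xi_i,\qquad P_i:=c_2\cdots c_{i-1}. $$
Thus $\operatorname{Im}d\Psi=\sum_{i=2}^{n}\Ad_{P_i}\operatorname{Im}(1-\Ad_{c_i})\subset\su$, and $(\bsa,\bsb)$ is critical exactly when this sum is a proper subspace.

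The heart of the argument is to identify the orthogonal complement of this sum with respect to the invariant inner product. Since $\Ad$ is orthogonal, a vector $v$ is orthogonal to the image if and only if $(1-\Ad_{c_i^{-1}})\Ad_{P_i^{-1}}v=0$ for every $i$, that is, if and only if $v$ is fixed by $\Ad_{d_i}$, where $d_i:=P_ic_iP_i^{-1}$. Therefore $(\bsa,\bsb)$ is critical precisely when there is a nonzero $v\in\su$ fixed by all the $\Ad_{d_i}$; in $SU(2)$ this occurs exactly when $d_2,\dots,d_n$ lie in the common maximal torus $\exp(\bR v)$, equivalently when they pairwise commute.

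Finally I would translate this condition back to the $c_i$. Since $d_2=c_2$ and each $c_i$ is recovered from $d_2,\dots,d_i$ by inductive conjugation (as $c_i=P_i^{-1}d_iP_i$ with $P_i\in\langle d_2,\dots,d_{i-1}\rangle$), the elements $d_2,\dots,d_n$ and $c_2,\dots,c_n$ generate the same subgroup of $G$; hence the $d_i$ pairwise commute iff the $c_i$ do. Using the $SU(2)$-specific fact that any family of pairwise-commuting elements shares a maximal torus, together with $c_1=(c_2\cdots c_n)^{-1}\in\langle c_2,\dots,c_n\rangle$, this is equivalent to $b_1,\,a_2b_2a_2^{-1},\dots,\,a_nb_na_n^{-1}$ all lying on a common maximal torus, which is the assertion. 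I expect the main obstacle to be exactly this last translation: one must set up the subgroup-generation identity between the $d_i$ and the $c_i$ carefully, and invoke the feature of $SU(2)$ that commuting elements share a maximal torus — a step that would fail in higher rank and that requires attention to the central elements $\pm 1$, which lie in every torus and so must be treated separately.
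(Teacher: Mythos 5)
Your proof is correct, and its first half coincides with the paper's: both compute the differential of $\mu^G$ using the right trivialization and the coordinates $(\bsa, b_2,\dots,b_n)$ on $\scN^G(\RS)\cong G^{2(n-1)}$, observe that the components $b_i^{-1}$, $i\ge 2$, are already submersive in the $b_i$-directions, and thereby reduce criticality to non-surjectivity of the map $\bsxi \mapsto \sum_{i=2}^n \Ad_{P_i}(1-\Ad_{c_i})\xi_i$, which is exactly the map appearing in the paper's condition \eqref{eq:ker_dmu}. Where you genuinely diverge is in the key linear-algebra step. The paper writes $b_i = g_i e^{\alpha_i x_0} g_i^{-1}$ and conjugates everything into diagonal form, so that the image becomes a sum of two-dimensional ``off-diagonal'' planes $\Ad_{h_i}V$; this sum is proper exactly when all the planes coincide, i.e.\ when some $g$ makes all $gh_i$ diagonal, and an induction on partial products converts that into simultaneous diagonalizability of $b_1, c_2,\dots,c_n$. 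You instead dualize: since $\Ad$ is orthogonal, the orthogonal complement of the image is the space of $v$ fixed by every $\Ad_{d_i}$ with $d_i = P_i c_i P_i^{-1}$, so criticality means a common nonzero fixed vector, i.e.\ a common maximal torus for the $d_i$; your subgroup-generation identity $\langle d_2,\dots,d_n\rangle = \langle c_2,\dots,c_n\rangle$ (your induction is valid, since $P_i = d_{i-1}\cdots d_2$) then transfers this to the $c_i$, and $b_1 = (c_2\cdots c_n)^{-1}$ comes along for free. Your route buys uniformity and coordinate-freeness: it requires no choice of diagonalizing elements $g_i$, and it treats central holonomies $b_i = \pm 1$ (i.e.\ $\alpha_i \in \{0,1/2\}$) with no extra case analysis, whereas the paper's assertion that the image has dimension at least two tacitly uses $e^{\alpha_i x_0}\ne \pm 1$, i.e.\ that the point lies over $\scC_\bsalpha$ with $\bsalpha \in (0,1/2)^n$. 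What the paper's explicit matrix computation buys in return is the precise statement that the rank drops from three to exactly two at a critical point, together with normal forms that are reused in the proofs of Proposition \ref{pr:non-free} and Corollary \ref{cor:stratification}; your argument recovers this too (the fixed-vector space is one-dimensional unless all $d_i$ are central), but you would need to say so explicitly if this proposition is to feed into those later statements.
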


\begin{proof}
Suppose that $\mu^G(\bsa, \bsb) = \bsb^{-1} \in \scC_{\bsalpha}$.
Under the identifications 
$T_{(\bsa, \bsb)} \scN^G (\RS) \cong T_{(\bsa, \bsb)}G^{2(n-1)}
\cong \frakg^{2(n-1)}$  and
$T_{\bsb^{-1}}G^n \cong \frakg^{n}$ by right translations,
$d \mu^G_{(\bsa, \bsb)} : \frakg^{2(n-2)} \to \frakg^{n}$ 
is given by
\[
  d \mu^G_{(\bsa, \bsb)} (\xi_2, \dots, \xi_n, \eta_2, \dots, \eta_n)
  = (-\Ad_{b_1^{-1}}\eta_1, \dots, -\Ad_{b_n^{-1}}\eta_n)
\]
with
\[
  -\Ad_{b_1^{-1}}\eta_1 = \sum_{i=2}^n
  \Ad_{(a_2b_2a_2^{-1}) \dots  (a_{i-1}b_{i-1}a_{i-1}^{-1})}
  \bigl(\xi_i - \Ad_{a_ib_ia_i^{-1}} \xi_i - \Ad_{a_i} \eta_i \bigr).
\]
Hence $(\bsxi, \bseta) \in \ker d \mu^G_{(\bsa, \bsb)}$ if and only if
$\bseta=0$ and
\begin{equation}
  \sum_{i=2}^n
  \Ad_{(a_2b_2a_2^{-1}) \dots  (a_{i-1}b_{i-1}a_{i-1}^{-1})}
  \bigl(\xi_i - \Ad_{a_ib_ia_i^{-1}} \xi_i \bigr) = 0.
 \label{eq:ker_dmu}
\end{equation}
Since $b_i \in \scC_{\alpha_i}$, there exists $g_i \in G$ such that
$b_i = g_i e^{\alpha_i x_0} g_i^{-1}$.
Setting
\begin{align*}
  h_i &= (a_2b_2a_2^{-1}) \dots  (a_{i-1}b_{i-1}a_{i-1}^{-1}) a_i g_i,\\
  \xi'_i &= \Ad_{g_i^{-1} a_i^{-1}} \xi_i,
\end{align*}
the equation \eqref{eq:ker_dmu} is written as
\begin{align} \label{eq:ker2}
  \sum_{i=2}^n \Ad_{h_i} \bigl( \xi'_i - 
  \Ad_{\exp(\alpha_i x_0)} \xi'_i \bigr) = 0.
\end{align}
Since
\[
  \xi' - \Ad_{\exp(\alpha_i x_0)} \xi'
  = 
  \begin{pmatrix} 
    0 & (1-e^{4\pi\sqrt{-1} \alpha_i}) \xi'_{12} \\
    (1-e^{-4\pi\sqrt{-1} \alpha_i}) \xi'_{21} & 0
  \end{pmatrix}
\]
for $\xi' = (\xi'_{ij}) \in \frakg$, 
the dimension of the image of the map
\begin{align*}
 \frakg^{n-1} \to \frakg, \quad
 (\xi_2',\ldots,\xi_n') \mapsto
  \sum_{i=2}^n \Ad_{h_i} \bigl( \xi'_i - 
  \Ad_{\exp(\alpha_i x_0)} \xi'_i \bigr)
\end{align*}
is at least two,
and is exactly two
if and only if there exists some $g \in G$
such that $g h_2, \dots, g h_n$ are diagonal matrices.
Note that
\[
  (g h_i) e^{\alpha_i x_0} (g h_i)^{-1}
  = g \bigl( (a_2 b_2 a_2^{-1}) \dots (a_{i-1} b_{i-1} a_{i-1}^{-1}) \bigr)
  (a_i b_i a_i^{-1})
  \bigl( (a_2 b_2 a_2^{-1}) \dots (a_{i-1} b_{i-1} a_{i-1}^{-1}) \bigr)^{-1}
   g^{-1}.
\]
If $g h_2$ is a diagonal matrix,
then so is $g a_2 b_2 a_2^{-1} g^{-1}$.
If $h_3$ is also a diagonal matrix,
then so is
$g (a_2 b_2 a_2^{-1}) (a_3 b_3 a_3^{-1}) (a_2 b_2 a_2^{-1})^{-1} g^{-1}$,
and hence $g a_3 b_3 a_3^{-1} g^{-1}$ is also a diagonal matrix.
By continuing the same discussion,
one shows that if $g h_2, \ldots, g h_n$ are diagonal matrices,
then so are $g a_i b_i a_i^{-1} g^{-1}$ for $n=2, \ldots, n$ are.
Then \eqref{eq:b1} implies that $g b_1 g^{-1}$ is also a diagonal matrix.
This means that
$b_1, a_2b_2a_2^{-1}, \dots, a_nb_na_n^{-1}$ are
in the same maximal torus,
and \pref{pr:crit_mu} is proved.
\end{proof}

The other way for $\cN_\alpha(\RS)$
to be singular
is for the $G^n$-action
on the level set
$\scN^G(\RS ; \bsalpha) = \bigl(\mu^G\bigr)^{-1}(\scC_{\bsalpha})$
to have more stabilizer
than the generic orbit.
Note that the generic stabilizer is given by
$\{ \pm \bsone \} = \{ \pm (1, \dots, 1) \}  \subset G^n$.

\begin{proposition} \label{pr:non-free}
The non-free locus of the $G^n / \{ \pm \bsone \}$-action
on $\scN^G(\RS ; \bsalpha)$ consists of
$(\bsa, \bsb) \in \scN^G(\RS)$ such that
$b_1, a_2b_2a_2^{-1}, \dots, a_n b_n a_n^{-1}$
lie on a common maximal torus.
\end{proposition}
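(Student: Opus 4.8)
The plan is to compute the stabilizer of a point $(\bsa,\bsb) \in \scN^G(\RS;\bsalpha)$ under the $G^n$-action explicitly and then read off exactly when it is strictly larger than $\{\pm\bsone\}$. Suppose $\bssigma = (\sigma_1,\dots,\sigma_n)$ fixes $(\bsa,\bsb)$. The $a$-components of the action give $\sigma_1 a_i \sigma_i^{-1} = a_i$, i.e.\ $\sigma_i = \Ad_{a_i^{-1}}\sigma_1$ for $i=2,\dots,n$, so $\bssigma$ is completely determined by $\sigma_1$. The $b$-components give $\sigma_i b_i \sigma_i^{-1} = b_i$, i.e.\ $\sigma_i$ commutes with $b_i$; substituting $\sigma_i = \Ad_{a_i^{-1}}\sigma_1$, I would rewrite the condition ``$\Ad_{a_i^{-1}}\sigma_1$ commutes with $b_i$'' as the equivalent condition ``$\sigma_1$ commutes with $a_i b_i a_i^{-1}$'', while the $i=1$ component asks that $\sigma_1$ commute with $b_1$. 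Hence $\bssigma$ stabilizes $(\bsa,\bsb)$ if and only if
\[
  \sigma_1 \in Z := Z_G\bigl( b_1,\, a_2 b_2 a_2^{-1},\, \dots,\, a_n b_n a_n^{-1} \bigr),
\]
which identifies the stabilizer in $G^n$ with $Z$, the generic stabilizer $\{\pm\bsone\}$ corresponding to the center $\{\pm 1\}\subset G$. So $(\bsa,\bsb)$ lies in the non-free locus of the $G^n/\{\pm\bsone\}$-action precisely when $Z \supsetneq \{\pm 1\}$.

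Next I would determine when $Z$ exceeds the center, using the structure of $G = SU(2)$. Since $(\bsa,\bsb)$ lies in the level set, $b_i^{-1} \in \scC_{\alpha_i}$ has eigenvalues $e^{\pm 2\pi\sqrt{-1}\alpha_i} \ne \pm 1$ because $\alpha_i \in (0,1/2)$, so each of the elements $b_1, a_2 b_2 a_2^{-1}, \dots, a_n b_n a_n^{-1}$ is a regular (non-central) element of $SU(2)$. The centralizer of a regular element of $SU(2)$ is exactly the unique maximal torus containing it, and two distinct maximal tori meet only in $\{\pm 1\}$. Intersecting these centralizers over $i$, I conclude that $Z$ is strictly larger than $\{\pm 1\}$ if and only if all of these elements lie on one common maximal torus, in which case $Z$ equals that torus. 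This is precisely the asserted description of the non-free locus.

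This is essentially a bookkeeping of stabilizers, so I do not anticipate a serious obstacle; the only step requiring care is the elimination of $\sigma_2,\dots,\sigma_n$ in terms of $\sigma_1$ and the resulting clean identification of the stabilizer with the single centralizer $Z$, after which the $SU(2)$ maximal-torus dichotomy finishes the argument. It is worth remarking that the locus obtained here coincides with the critical set of $\mu^G$ found in \pref{pr:crit_mu}: the same ``common maximal torus'' condition governs both potential sources of singularity of $\scNa(\RS)$, and it is precisely the regularity of the $b_i$ forced by $\bsalpha \in (0,1/2)^n$ that makes the two conditions agree.
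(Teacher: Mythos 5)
Your proof is correct, and it starts exactly where the paper's proof starts --- writing out the stabilizer equations and eliminating $\sigma_2, \dots, \sigma_n$ via $\sigma_i = a_i^{-1}\sigma_1 a_i$ --- but it finishes by a genuinely different route. The paper uses the $G^n$-action to put each $b_i$ in diagonal form, deduces $\sigma_i = \sigma_1^{\epsilon_i}$ with $\epsilon_i \in \{\pm 1\}$, and then computes the explicit shape of each $a_l$ (diagonal, or the Weyl flip times a diagonal matrix), from which it reads off that every $a_l b_l a_l^{-1}$ is diagonal; a by-product of that explicit computation is the relation $\sum_i \epsilon_i \alpha_i \in \bZ$, i.e.\ that $\bsalpha$ lies on a wall $H_{I,k}$, which is precisely what the surrounding text (the link to Lemma \ref{lm:wall1}, the description of the stabilizer as $T$ or $G$, and Corollaries \ref{cor:stratification} and \ref{cor:diffeo_Na}) goes on to use. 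You instead identify the stabilizer abstractly with the joint centralizer $Z = Z_G(b_1, a_2b_2a_2^{-1}, \dots, a_nb_na_n^{-1})$ and invoke the $SU(2)$ dichotomy: the centralizer of a non-central element is the unique maximal torus containing it, and two distinct maximal tori meet only in $\{\pm 1\}$. This is cleaner, and it handles both inclusions (non-free $\Rightarrow$ common torus, and conversely) uniformly, whereas the paper's ``conversely'' only exhibits one fixed configuration per wall rather than verifying freeness fails at every common-torus point; on the other hand it does not by itself produce the wall relation that the paper extracts along the way. One small caveat: your regularity claim for the $b_i$ uses $\alpha_i \in (0,1/2)$, while this section allows $\bsalpha \in [0,1/2)^n$ (the paper's remark right after the proof explicitly discusses $b_i = \pm 1$); the fix is immediate, since a central $b_i$ contributes $Z_G(b_i) = G$ to the intersection and lies on every maximal torus, so your criterion $Z \supsetneq \{\pm 1\}$ iff common torus is unaffected.
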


\begin{proof}
Suppose that $\bssigma = (\sigma_1, \dots, \sigma_n) \in G^n$ fixes 
$(\bsa,\bsb) \in \scN^G(\RS; \bsalpha)$, i.e.,
\begin{align}
  \sigma_1 a_l \sigma_l^{-1} &= a_l, \qquad l=2, \dots, n, 
  \label{eq:stab1} \\
  \sigma_i b_i \sigma_i^{-1} &= b_i, \qquad i=1, \dots, n.
  \label{eq:stab2}
\end{align}
The condition \eqref{eq:stab1} is written as 
$\sigma_l = a_l^{-1} \sigma_1 a_l$,
which means that $\sigma_1, \dots, \sigma_n$ are in the same
conjugacy class.
By the $G^n$-action
\[
  a_l \mapsto g_1a_lg_l^{-1}, \quad
  b_i \mapsto g_ib_ig_i^{-1}, \quad
  \sigma_i \mapsto g_i\sigma_i g_i^{-1}, 
\]
we may assume that 
$b_i = e^{\alpha_i x_0}$
are diagonal matrices for $i=1, \dots, n$.
Then \eqref{eq:stab2} implies that $\sigma_i$ is a diagonal matrix if
$b_i \ne 1$.
We may assume that $\sigma_i$ is diagonal also in the case $b_i=1$
by the $G^n$-action. 
Since $\sigma_1,\ldots,\sigma_n$ are diagonal matrices
in the same conjugacy class,
one has $\sigma_i = \sigma_1^{\epsilon_i}$
for some diagonal matrix $\sigma_1$ and
$\epsilon_i \in \{ \pm 1 \}$.
Now we assume that $\bssigma \ne \pm \bsone$.
This implies $\sigma_i \ne \pm 1$ for all $i=1, \dots , n$,
since $(\pm 1)^{-1} = \pm 1$.
From \eqref{eq:stab1}, $a_l$ has the form
\[
  a_l = \begin{pmatrix} 0 & 1 \\ 1 & 0 \end{pmatrix}^{(1-\epsilon_l)/2}
  \begin{pmatrix}
    e^{2 \pi \sqrt{-1} \tau_l} & 0 \\
    0 & e^{-2 \pi \sqrt{-1} \tau_l}
  \end{pmatrix},
\]
and hence 
\[
  a_l b_l a_l^{-1} = \begin{pmatrix}
    e^{2 \pi \sqrt{-1} \epsilon_l \alpha_l} & 0 \\
    0 & e^{-2 \pi \sqrt{-1} \epsilon_l \alpha_l}
  \end{pmatrix}.
\]
The condition $b_1 (a_2 b_2 a_2^{-1}) \dots (a_n b_n a_n^{-1})=1$
implies that $\sum_i \epsilon_i \alpha_i = k \in \mathbb{Z}$,
which means that $\bsalpha \in H_{I,k}$ for 
$I = \{ i \, | \, \epsilon_i = 1 \}$.

Conversely, if $\bsalpha \in H_{I,k}$, then the above argument 
shows that there exists a set of diagonal matrices 
$(\bsa, \bsb) \in \scN^G(\RS ; \bsalpha)$ 
which has a non-trivial stabilizer.
\end{proof}

The proof of \pref{pr:non-free} shows that
any element of the stabilizer of 
$(\bsa, \bsb) \in \scN^G(\RS; \bsalpha)$ has the form
\[
  \bssigma 
  = ( \sigma_1, \dots, \sigma_n) 
  = (\sigma_1, a_2^{-1} \sigma_1 a_2, \dots, a_n^{-1}\sigma_1 a_n).
\]
Note that $b_i = \pm 1$ if and only if $\alpha_i \in \{ 0, 1/2 \}$.
If $b_i \ne \pm 1$ for some $i$, then \eqref{eq:stab2} implies that 
$\sigma_i = a_i^{-1}\sigma_1 a_i$ must be in a maximal torus,
and hence the stabilizer is isomorphic to $T$.
On the other hand, if $b_i = \pm 1$ for all $i$, then the stabilizer is 
isomorphic to $G$, since $\sigma_1$ can be arbitrary.
When $\bsalpha \in \{ 0, 1/2 \}^n$,
then $\bsb \in \{ \pm 1 \}^n$ carries no degree of freedom
and the $\bsa$-projection
induces an isomorphism of $\cN^G(\RS; \bsalpha)$
with $G^{n-1}$.
The $G^n$ action on $G^{n-1}$ indeed has a stabilizer isomorphic to $G$,
and the quotient $\cN_\bsalpha(\RS) = \cN^G(\RS; \bsalpha)/G^n$
consists of one point.

Propositions \ref{pr:crit_mu} and \ref{pr:non-free} shows that
if $\bsalpha$ lies on some $H_{I,k}$,
then the singular locus of $\cN_\bsalpha(\RS)$
is given by $[g_1,\ldots,g_n] \in \cC_\bsalpha / G$
such that $g_1, \ldots, g_n$ lie on a common maximal torus.
Then one can diagonalize $g_1, \ldots, g_n$ simultaneously,
so that $g_i = \exp(\epsilon_i \alpha_i x_0)$
where $\bsepsilon$ are given in \eqref{eq:epsilon}.
If $\bsalpha$ lies on $k$ walls,
then $\cN_\bsalpha(\RS)$ has $k$ isolated singularities,
each of which is given by
$
 [\exp(\epsilon_1 \alpha_1 x_0), \ldots, \exp(\epsilon_n \alpha_n x_0)].
$

\begin{corollary} \label{cor:stratification}
Suppose that $\bsalpha$ is a weight lying on some $H_{I,k}$.
Let $(\bsa, \bsb) \in \scN^G(\RS; \bsalpha)$ be a critical point of $\mu^G$,
and $\bsg \in \scN_{\bsalpha}(\RS)$ be the corresponding singular point.
Then there exists an open neighborhood $U \subset \scN_{\bsalpha}(\RS)$ 
of $\bsg$ such that $\scN^G(\RS; \bsalpha)$ is locally homeomorphic to
$
 (\frakg^n/ \frakt) \times
  \big( (U \times T) \big/ (\{\bsg \} \times T) \big).
$
In particular, $\scN^G(\RS; \bsalpha)$ admits a 
$G^n$-invariant Whitney stratification.
\end{corollary}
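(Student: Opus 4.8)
The plan is to obtain the asserted local model by applying the slice theorem to the $G^n$-action at the critical point $(\bsa,\bsb)$ and then reading off the induced structure on the level set. The space $\scN^G(\RS)$ is a smooth manifold (diffeomorphic to $G^{2(n-1)}$) on which the compact group $G^n$ acts smoothly, and by the proof of \pref{pr:non-free} the stabilizer of $(\bsa,\bsb)$ is the one-dimensional subtorus $H = \{(\sigma, a_2^{-1}\sigma a_2, \dots, a_n^{-1}\sigma a_n) \mid \sigma \in T\} \cong T$ of $G^n$. The slice theorem therefore furnishes a $G^n$-equivariant diffeomorphism from a tubular neighborhood of the orbit $G^n\cdot(\bsa,\bsb)$ onto the associated bundle $G^n\times_H S$, where $S$ is a linear slice carrying an $H$-representation; a dimension count gives $\dim_\bR S = 3n-5$.

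Because $\mu^G$ is $G^n$-equivariant and $\scC_{\bsalpha}$ is invariant under conjugation, the level set $\scN^G(\RS;\bsalpha) = (\mu^G)^{-1}(\scC_{\bsalpha})$ is $G^n$-invariant and hence corresponds, in the slice model, to $G^n\times_H S_0$, where $S_0 := S\cap(\mu^G)^{-1}(\scC_{\bsalpha})$ is $H$-invariant with $H$-quotient the neighborhood $U := S_0/H \subset \scN_{\bsalpha}(\RS)$ of $\bsg$. Trivializing the principal $H$-bundle $G^n\to G^n/H$ over a contractible chart about the base point identifies a neighborhood in $G^n\times_H S_0$ with $(G^n/H)\times S_0$, and a chart identifies $G^n/H$ near the base point with $\frakg^n/\frakt$. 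Thus, up to a local homeomorphism, $\scN^G(\RS;\bsalpha)\cong(\frakg^n/\frakt)\times S_0$, and the task reduces to identifying the $H$-space $S_0$ with $(U\times T)/(\{\bsg\}\times T)$, that is, with $U\times T$ in which the fibre over $\bsg$ is collapsed.

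For this last identification I would proceed as follows. By \pref{pr:crit_mu} and \pref{pr:non-free} the non-free locus of the $G^n$-action on $\scN^G(\RS;\bsalpha)$ is exactly the critical set of $\mu^G$, and by the discussion preceding the corollary the resulting singular points of $\scN_{\bsalpha}(\RS)$ are isolated (one for each wall through $\bsalpha$). After shrinking $U$ so that it contains no other singular point, the central point $p=(\bsa,\bsb)$ is the unique $H$-fixed point of $S_0$, and the circle $H/\{\pm\bsone\}$ acts freely on $S_0\setminus\{p\}$; hence $S_0\setminus\{p\}\to U\setminus\{\bsg\}$ is a principal circle bundle. Since $\scN^G(\RS;\bsalpha)$ is real-algebraic, $S_0$ is an $H$-invariant semialgebraic set, so by the (equivariant) local conic structure theorem it is $H$-equivariantly homeomorphic near $p$ to the cone on its link $L$, the intersection of $S_0$ with a small sphere about $p$, with $L/H$ the link of $\bsg$ in $U$. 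Identifying $S_0$ with the collapsed product $(U\times T)/(\{\bsg\}\times T)$ then amounts to trivializing the circle bundle $L\to L/H$, and this is the step I expect to be the main obstacle; I would extract the triviality from the explicit $H$-weights on the slice, computed as in the proof of \pref{pr:crit_mu}, where the map $\xi'\mapsto\xi'-\Ad_{\exp(\alpha_i x_0)}\xi'$ carries the off-diagonal weights $1-e^{\pm 4\pi\sqrt{-1}\alpha_i}$ that govern the bundle.

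Finally, the ``in particular'' clause follows from the local model. Its two natural pieces --- the image of $(\frakg^n/\frakt)\times\{\text{collapsed point}\}$, along which the $G^n$-orbits have the smaller type $G^n/H$, and its complement, along which the orbits have the generic type $G^n/\{\pm\bsone\}$ --- are precisely the orbit-type strata, and the product-with-a-cone form of the model makes the Whitney conditions transparent. As the local models are $G^n$-equivariant, they patch to a global $G^n$-invariant Whitney stratification of $\scN^G(\RS;\bsalpha)$.
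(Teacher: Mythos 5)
Your reduction --- the slice theorem for the $G^n$-action at $(\bsa,\bsb)$, the passage to the $H\cong T$-invariant slice piece $S_0$, and the observation that the asserted model is equivalent to trivializing the circle bundle $L\to L/H$ over the link --- is sound, and it is the natural formalization of the argument the paper leaves implicit (the corollary is stated there without proof, as a consequence of Propositions \ref{pr:crit_mu} and \ref{pr:non-free}). The genuine gap is exactly the step you flag and postpone: the triviality of that circle bundle. This is not a loose end that a weight computation will tie up, because it fails on some walls. By the local normal form for (quasi-)Hamiltonian reductions (the same Marle--Guillemin--Sternberg device invoked in Proposition \ref{prop:Jeffrey_EMS}), the slice representation of $H$ at the critical point is, modulo the ineffective $\{\pm\bsone\}$, the space $\Ext^1(\scQ,\scS)\oplus\Ext^1(\scS,\scQ)\cong\bC^p\oplus\bC^q$ with weight $+1$ on the first summand and $-1$ on the second, where $(\scS,\scQ)$ is the destabilizing pair attached to the wall and $p+q=n-2$ (consistently with Proposition \ref{pr:VGIT}, whose two contractions have exceptional fibers $\bP^{p-1}$ and $\bP^{q-1}$). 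Hence $S_0\cong\{(v,w)\in\bC^p\oplus\bC^q : |v|^2=|w|^2\}$, the cone over $L=S^{2p-1}\times S^{2q-1}$ with the anti-diagonal circle action. If $\min(p,q)=1$, the bundle $L\to L/H$ has an evident global section and your argument closes. But if $p,q\ge 2$, then $L$ is simply connected, whereas the link of the collapsed point in $(U\times T)/(\{\bsg\}\times T)$ is $(L/H)\times S^1$, whose first homology contains $\bZ$; since local homology at non-manifold points is a homeomorphism invariant (and non-manifold points must map to non-manifold points, so the smooth factor $\frakg^n/\frakt$ cannot absorb the discrepancy), no local homeomorphism between the two models exists. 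This already occurs in the paper's range: for $n=6$ and a wall $H_{I,0}$ with $|I|=3$ (the $\bP^1\leftrightarrow\bP^1$ flip of Theorem \ref{th:VGIT_parabolic}), $\bsg$ is a conifold point of $\scN_{\bsalpha}(\RS)$, $L=S^3\times S^3$, and the competing link is $S^2\times S^3\times S^1$.

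So your proposal cannot be completed along the route you sketch; what your reduction actually shows is that the collapsed-product model holds precisely when $\min\bigl(\dim\Ext^1(\scQ,\scS),\dim\Ext^1(\scS,\scQ)\bigr)=1$ (e.g.\ all walls for $n=5$, and the point-blow-up walls in general), so the obstruction lies in the statement itself rather than in your strategy. The part of the corollary that the paper actually uses later (Section \ref{sc:isom_Goldman}) is robust, and your steps before the bundle-trivialization do establish it: near the critical orbit one has $\scN^G(\RS;\bsalpha)\cong G^n\times_H S_0$ with $S_0$ the $H$-cone $C(L)=\bigl([0,\infty)\times L\bigr)/\bigl(\{0\}\times L\bigr)$, so the space is locally of the form (open set in a smooth stratum)$\,\times\,$(cone), and the orbit-type decomposition is a $G^n$-invariant, locally cone-like Whitney stratification; alternatively, this follows from $\scN^G(\RS;\bsalpha)$ being a $G^n$-invariant real algebraic subset of $G^{2(n-1)}$. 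If you want a correct closed form of the local model, replace $(U\times T)/(\{\bsg\}\times T)$ by $C\bigl(S^{2p-1}\times S^{2q-1}\bigr)$.
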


Here $(U \times T) \big/ (\{\bsg \} \times T)$
is the topological space
obtained from $U \times T$
by contracting the subset $\{\bsg \} \times T \subset U \times T$
to a point, and
$\frakg^n / \frakt$ is the quotient vector space.

Propositions \ref{pr:crit_mu} and \ref{pr:non-free} implies the following.

\begin{corollary} \label{cor:diffeo_Na}
If $\bsalpha$ and $\bsalpha'$ are in the same chamber, then
$\scN_{\bsalpha}(\RS)$ is diffeomorphic to $\scN_{\bsalpha'}(\RS)$.
\end{corollary}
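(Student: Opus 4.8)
The plan is to connect $\bsalpha$ and $\bsalpha'$ by a path inside the chamber and exhibit the corresponding reduced spaces as the fibers of a locally trivial fibration, so that the diffeomorphism type is constant along the path. By \pref{th:Na_red} we identify $\scN_{\bsalpha}(\RS)$ with the quasi-Hamiltonian reduction $\scN^G(\RS; \bsalpha)/G^n$, where $\scN^G(\RS; \bsalpha) = (\mu^G)^{-1}(\scC_{\bsalpha})$. Since each wall $H_{I,k}$ is cut out by the affine-linear equation $\sum_i \epsilon_i \alpha_i = k$, every chamber is convex; hence the straight segment $\bsalpha_t = (1-t)\bsalpha + t\bsalpha'$, $t \in [0,1]$, stays in the chamber and joins $\bsalpha_0 = \bsalpha$ to $\bsalpha_1 = \bsalpha'$.

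The key point is that no $\bsalpha_t$ lies on a wall. By \pref{lm:wall1} this means that $\scC_{\bsalpha_t}$ contains no tuple $(g_1, \dots, g_n)$ with $g_1, \dots, g_n$ on a common maximal torus and $g_1 \cdots g_n = 1$. Combined with \pref{pr:crit_mu}, this forces $\scN^G(\RS; \bsalpha_t)$ to contain no critical point of $\mu^G$, so that $\scC_{\bsalpha_t}$ is a regular value and $\scN^G(\RS; \bsalpha_t)$ is a smooth compact submanifold; combined with \pref{pr:non-free}, the $G^n / \{ \pm \bsone \}$-action on it is free. Consequently each reduction $\scN_{\bsalpha_t}(\RS)$ is a smooth compact manifold.

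To globalize, let $q \colon G \to [0,1/2]$ be the conjugation-invariant map sending $g \in SU(2)$ to its conjugacy angle, so that $\scC_\alpha = q^{-1}(\alpha)$ for $\alpha \in (0,1/2)$, and consider the $G^n$-invariant map $F \colon \scN^G(\RS) \to [0,1/2]^n$, $(\bsa, \bsb) \mapsto (q(b_1), \dots, q(b_n))$, whose fiber over $\bsalpha_t$ is exactly $\scN^G(\RS; \bsalpha_t)$. Over the chamber, $F$ is a submersion, by \pref{pr:crit_mu} together with the fact that $\alpha_i \ne 0$ keeps us away from $q^{-1}(0)$ where the conjugacy classes degenerate, and the $G^n / \{ \pm \bsone \}$-action on its fibers is free by \pref{pr:non-free}. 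Taking the preimage $F^{-1}(L)$ of the segment $L = \{ \bsalpha_t \}$ and passing to the quotient, the induced projection $\overline{F} \colon F^{-1}(L)/G^n \to L$ is a proper submersion with compact fibers $\scN_{\bsalpha_t}(\RS)$. By Ehresmann's fibration theorem it is a locally trivial fiber bundle, hence trivial over the interval $L$; in particular its fibers $\scN_{\bsalpha}(\RS) = \scN_{\bsalpha_0}(\RS)$ and $\scN_{\bsalpha'}(\RS) = \scN_{\bsalpha_1}(\RS)$ are diffeomorphic.

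The main technical point to secure is that the quotient family is smooth and that $\overline{F}$ is a submersion at the reduced level. This is handled by working $G^n$-equivariantly upstairs: freeness of the action (\pref{pr:non-free}) guarantees that $F^{-1}(L)/G^n$ is a smooth manifold (with boundary) and that submersivity of $F$ descends to $\overline{F}$, while properness is automatic because the reductions are compact. Once these are in place, Ehresmann's theorem applies verbatim and the corollary follows.
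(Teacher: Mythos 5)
Your proof is correct and follows essentially the same route as the paper: the paper states the corollary as an immediate consequence of Propositions \ref{pr:crit_mu} and \ref{pr:non-free} (combined with Lemma \ref{lm:wall1}), i.e., that for weights inside a chamber the level set $\scN^G(\RS;\bsalpha)$ consists of regular points of $\mu^G$ on which $G^n/\{\pm\bsone\}$ acts freely. Your Ehresmann fibration argument over a segment joining $\bsalpha$ to $\bsalpha'$ is precisely the standard completion of this step that the paper leaves implicit.
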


Let 
\[
  \mu^T = \mu^G|_{\scN^T(\RS)} : \scN^T(\RS) \longrightarrow T^n,
  \quad (\bsa, \bsb) \longmapsto \bsb^{-1}
\]
be the restriction of the group-valued moment map.
Then
\[
  (\mu^G)^{-1}(\scC_{\bsalpha}) \cap \scN^T(\RS)
  = (\mu^T)^{-1}(e^{-\alpha_1 x_0}, \dots, e^{- \alpha_n x_0}).
\]

\begin{corollary}
If $\bsalpha \nin \{0, 1/2\}^n$, then the diffeomorphism
$(\mu^G)^{-1}(\scC_{\bsalpha})/G^n \cong \scN_{\bsalpha}(\RS)$ induces
\[
  (\mu^T)^{-1}(e^{-\alpha_1 x_0}, \dots, e^{- \alpha_n x_0})/T^n
  \cong \scN_{\bsalpha}(\RS).
\]
\end{corollary}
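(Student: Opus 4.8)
The plan is to exhibit $(\mu^T)^{-1}(e^{-\alpha_1 x_0},\dots,e^{-\alpha_n x_0})$ as a cross-section for the $G^n$-action on $(\mu^G)^{-1}(\scC_\bsalpha)$, so that the $G^n$-reduction at the conjugacy class $\scC_\bsalpha$ coincides with the reduction by the residual torus $T^n$ at a single torus value. Write $t = (e^{-\alpha_1 x_0},\dots,e^{-\alpha_n x_0}) \in T^n$ and $Z = (\mu^T)^{-1}(t)$. The identity $(\mu^G)^{-1}(\scC_\bsalpha)\cap \scN^T(\RS) = Z$ recorded just above the statement gives an inclusion $Z \hookrightarrow (\mu^G)^{-1}(\scC_\bsalpha)$. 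Since the $T^n$-action on $\scN^T(\RS)$ leaves the $\bsb$-coordinates fixed, it preserves $Z$, and the inclusion is equivariant for $T^n \subset G^n$; hence it descends to
\[
  \bar\iota \colon Z/T^n \longrightarrow (\mu^G)^{-1}(\scC_\bsalpha)/G^n .
\]
Composing with the symplectomorphism of \pref{th:Na_red}, it suffices to prove that $\bar\iota$ is a diffeomorphism.

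The group-theoretic input is the shape of the conjugacy classes of $G = SU(2)$. The hypothesis guarantees that every $e^{\alpha_i x_0} = \diag(e^{2\pi\sqrt{-1}\alpha_i}, e^{-2\pi\sqrt{-1}\alpha_i})$ is a \emph{regular} element of $T$, since $\alpha_i \notin \{0,1/2\}$ makes its two eigenvalues distinct; consequently its centralizer in $G$ is exactly $T$, and conjugation $G/T \to \scC_{\alpha_i}$, $gT \mapsto \Ad_g (e^{\alpha_i x_0})$, is a diffeomorphism. Moreover a representative of the nontrivial Weyl element conjugates $e^{\alpha_i x_0}$ to $e^{-\alpha_i x_0}$, so $\scC_{\alpha_i} = \scC_{\alpha_i}^{-1}$ and the membership $b_i^{-1} \in \scC_{\alpha_i}$ is equivalent to $b_i$ being conjugate to $e^{\alpha_i x_0}$.

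Granting this, $\bar\iota$ is a bijection. For surjectivity, given $(\bsa,\bsb) \in (\mu^G)^{-1}(\scC_\bsalpha)$ choose $\sigma_i \in G$ with $\sigma_i b_i \sigma_i^{-1} = e^{\alpha_i x_0}$; then $\bssigma \cdot (\bsa,\bsb) \in Z$ lies over the class of $(\bsa,\bsb)$. For injectivity, if $\bssigma\cdot(\bsa,\bsb) = (\bsa',\bsb')$ with both points in $Z$, then $\sigma_i e^{\alpha_i x_0} \sigma_i^{-1} = e^{\alpha_i x_0}$ for all $i$, and regularity forces $\sigma_i \in T$, so $\bssigma \in T^n$ and the two points already lie in one $T^n$-orbit.

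To promote this bijection to a diffeomorphism I would show that the $G^n$-equivariant map
\[
  G^n \times_{T^n} Z \longrightarrow (\mu^G)^{-1}(\scC_\bsalpha), \qquad [\bssigma, p] \longmapsto \bssigma \cdot p,
\]
is a diffeomorphism; passing to $G^n$-quotients then yields $\bar\iota$. Its bijectivity is exactly what was just proved, so the crux is that it is a local diffeomorphism, i.e.\ that $Z$ is a genuine smooth cross-section. This is where regularity re-enters: because each factor of $\scC_\bsalpha$ is the homogeneous space $G/T$ with $t$ a single $T^n$-orbit, $\scN^T(\RS)$ meets $(\mu^G)^{-1}(\scC_\bsalpha)$ transversally, and the normal directions to $Z$ are spanned by the infinitesimal $G^n$-action transverse to $T^n$. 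I expect this transversality to be the main obstacle, as it requires controlling $d\mu^G$ along $Z$; the explicit formula for $d\mu^G$ from \pref{pr:crit_mu} is the natural tool, the point being that at a point of $Z$ the directions normal to $T$ inside each conjugacy class are precisely the image of the off-diagonal part of the $G^n$-action. Finally, when $\bsalpha$ lies on a wall both sides are singular; there I would instead match the $G^n$- and $T^n$-invariant Whitney stratifications provided by \pref{cor:stratification}, verifying that $\bar\iota$ is a stratified homeomorphism which restricts to a symplectomorphism on the open dense smooth stratum.
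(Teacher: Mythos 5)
Your argument is correct and is essentially the one the paper leaves implicit: the corollary appears there with no proof, the intended justification being exactly your orbit matching --- conjugating each $b_i$ into $\exp(\alcove)$ gives surjectivity, and regularity of each $e^{\alpha_i x_0}$ (your reading of the hypothesis, each $\alpha_i \notin \{0,1/2\}$, which is indeed what the statement needs) forces centralizers to equal $T$ and gives injectivity on $T^n$-orbits. The only correction worth making is that the step you flag as the ``main obstacle'' is automatic: the restriction of $\mu^G$ to $(\mu^G)^{-1}(\scC_{\bsalpha})$ is a $G^n$-equivariant map onto the homogeneous space $\scC_{\bsalpha} \cong (G/T)^n$, and an equivariant map onto a homogeneous space is a submersion wherever its source is smooth (which holds off the walls by Proposition \ref{pr:crit_mu}), so your $Z$ is a smooth cross-section and $G^n \times_{T^n} Z \to (\mu^G)^{-1}(\scC_{\bsalpha})$ is a diffeomorphism with no computation of $d\mu^G$ needed; on a wall, compactness upgrades your continuous bijection of quotients to a homeomorphism, stratified as you indicate via Corollary \ref{cor:stratification}.
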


\section{Gluing and Goldman systems} \label{sc:gluing}

In this section, we see the Goldman's functions 
via gluing of Riemann surfaces, following the idea of
Hurtubise and Jeffrey \cite{Hurtubise-Jeffrey_RWFFPB}.

Fix a simple closed curve $C$ in $\RS$ and consider a decomposition 
$\RS = \RS^+ \cup_C \RS^-$ into two surfaces
by cutting $\RS$ along $C$.
We may assume that the boundary components of $\RS^+$ (resp. $\RS^-$)
are $B_1^+ = B_1, \dots , B_{m+1}^+ = B_{m+1}$, and  $B_{m+2}^+ = C$ 
(resp. $B_1^- = C, B_2^- = B_{m+2}, \dots, B_{n-m}^- = B_{n}$).
Then $\scN^G (\RS^+)$ 
(resp. $\scN^G (\RS^-)$) has the action of 
$G^{m+2} = G_1^+ \times \dots \times G_{m+2}^+$
(resp. $G^{n-m} = G_{1}^- \times \dots \times G_{n-m}^-$)
corresponding to the boundary components.
We write the moment maps $\mu^G_{\pm}$ on $\scN^G(\RS^{\pm})$ as
\begin{align*}
  \mu^G_+ &= (\mu^G_{B_1^+}, \dots, \mu^G_{B_{m+2}^+})
    =(\mu^+_{\le m+2}, \mu^+_{2,2}, \dots, \mu^+_{m+2,2}), \\
  \mu^G_- &= (\mu^G_{B_1^-}, \dots, \mu^G_{B_{n-m}^-})
    =(\mu^-_{\le n-m}, \mu^-_{2,2}, \dots, \mu^-_{n-m,2}).
\end{align*}
For the diagonal subgroup $G_C \subset G_{m+2}^+ \times G_1^-$, 
The moment map of the $G_C$-action on the fusion product
$\scN^G ( \RS^+ \amalg \RS^-) := 
\scN^G(\RS^+) \circledast \scN^G(\RS^-)$
is given by
\[
  \nu_C^{G} = \mu^G_{B^+_{m+2}} \cdot \mu^G_{B^-_1} :
  \scN^G(\RS^+ \amalg \RS^-)
  \longrightarrow G, \quad
  \bigl( (\bsa^+,\bsb^+), (\bsa^-, \bsb^-) \bigr)
  \longmapsto
  (b^-_1 b^+_{m+2})^{-1}.
\]
We define the ``gluing map'' 
$\pi^G_C : \bigl(\nu^G_C \bigr)^{-1}(1) \to \scN^G(\RS)$
by
\begin{multline*}
  \pi_C \bigl( (\bsa^+,\bsb^+), (\bsa^-, \bsb^-) \bigr) \\
  = (a_2^+, \dots, a_{m+1}^+, a_{m+2}^+ a_2^-, \dots, a_{m+2}^+ a_{n-m}^-;
     b_1^+, \dots, b_{m+1}^+, b^-_2, \dots, b_{n-m}^-).
\end{multline*}
(See Figure \ref{fig:tree}.)

\begin{figure}[h]
 \begin{center}
  \input{fig_tree.pst}
 \end{center}
 \caption{The dual graph of $\RS^+ \amalg \RS^-$.}
 \label{fig:tree}
\end{figure}

Then we have

\begin{proposition}
The map
$\pi_C : \bigl(\nu^G_C \bigr)^{-1}(1) \to \scN^G(\RS)$
induces an isomorphism
\[
  \bigl(\nu^G_C \bigr)^{-1}(1)/G_C \cong \scN^G(\RS)
\]
of quasi-Hamiltonian $G^n$-space.
\end{proposition}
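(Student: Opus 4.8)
The plan is to verify directly that $\pi_C$ is a $G_C$-invariant diffeomorphism onto $\scN^G(\RS)$ whose fibres are exactly the $G_C$-orbits, and then that it intertwines the residual $G^n$-actions, the $G^n$-valued moment maps, and the two-forms. Conceptually this is the instance, for $\RS = \RS^+ \cup_C \RS^-$, of the gluing principle of \cite[Section 9]{Alekseev-Malkin-Meinrenken_LGVMM}: gluing two boundary circles of a surface corresponds to fusing the two associated boundary factors $G_{m+2}^+$ and $G_1^-$ into the diagonal $G_C$ and reducing $G_C$ at the identity. A dimension count is consistent, since $\dim\scN^G(\RS^+)+\dim\scN^G(\RS^-)-2\dim G = 2(n-1)\dim G = \dim\scN^G(\RS)$.

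First I would check that $\pi_C$ lands in $\scN^G(\RS)$. On $\bigl(\nu^G_C\bigr)^{-1}(1)$ one has $b^+_{m+2}=(b^-_1)^{-1}$, so the product $b_1\prod_{i=2}^n\Ad_{a_i}b_i$ defining the image splits, using $\Ad_{a^+_{m+2}a^-_l}b^-_l=\Ad_{a^+_{m+2}}\bigl(\Ad_{a^-_l}b^-_l\bigr)$ together with the $\scN^G(\RS^-)$-relation $\prod_{l=2}^{n-m}\Ad_{a^-_l}b^-_l=(b^-_1)^{-1}=b^+_{m+2}$, as
\[
  b^+_1\prod_{i=2}^{m+1}\Ad_{a^+_i}b^+_i\cdot\Ad_{a^+_{m+2}}b^+_{m+2}
  = b^+_1\prod_{i=2}^{m+2}\Ad_{a^+_i}b^+_i,
\]
which is the defining relation of $\scN^G(\RS^+)$ and hence equals $1$. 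Next, $G_C$-invariance is immediate: a diagonal $g\in G_C$ acts by $a^+_{m+2}\mapsto a^+_{m+2}g^{-1}$ and $a^-_l\mapsto g\,a^-_l$, leaving every glued path $a^+_{m+2}a^-_l$ and every other entry unchanged.

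For bijectivity I would reconstruct the fibre. Given $(\bsa,\bsb)\in\scN^G(\RS)$, the plus-data $a^+_i=a_i,\ b^+_i=b_i$ $(2\le i\le m+1)$ and $b^+_1=b_1$ are forced; the element $a^+_{m+2}\in G$ is a free parameter; and then $a^-_l=(a^+_{m+2})^{-1}a_{m+l}$, $b^-_l=b_{m+l}$, $b^-_1=\bigl(\prod_{l=2}^{n-m}\Ad_{a^-_l}b^-_l\bigr)^{-1}$ and $b^+_{m+2}=(b^-_1)^{-1}$ are determined, with the $\scN^G(\RS^+)$-relation holding by running the computation above backwards. Thus each fibre is a single free $G_C$-orbit, so $\pi_C$ descends to a diffeomorphism of $\bigl(\nu^G_C\bigr)^{-1}(1)/G_C$ onto $\scN^G(\RS)$. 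Equivariance for the residual $G^n$ (the boundary factors $B_1,\dots,B_n$) and the identification of moment maps $(\mu_{B_1},\dots,\mu_{B_n})=(b_1^{-1},\dots,b_n^{-1})=\mu^G$ are then read off directly from the action formulas.

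The main obstacle is matching the two-forms. By the fusion product theorem the form on $\scN^G(\RS^+\amalg\RS^-)$ is $\omega_{\scN^G(\RS^+)}+\omega_{\scN^G(\RS^-)}+\frac{1}{2}\langle(\mu^G_{B^+_{m+2}})^*\theta,(\mu^G_{B^-_1})^*\overline{\theta}\rangle$, and I would compare its restriction to $\bigl(\nu^G_C\bigr)^{-1}(1)$ with $\pi_C^*\omega_{\scN^G(\RS)}$, where $\omega_{\scN^G(\RS)}$ is the sum \eqref{eq:qHam_2-form} over the doubles $D_2,\dots,D_n$. The doubles $D_2,\dots,D_{m+1}$ reproduce the corresponding plus-side terms directly, since the partial products $\mu_{\le i}$ agree for $i\le m+1$. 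The delicate part is that the doubles $D_{m+2},\dots,D_n$ carry the path-twist $a_{m+l}=a^+_{m+2}a^-_l$, so their $\Ad$-factors must be unwound using the Maurer--Cartan identities $g^{-1}(h^{-1}dh)g=(hg)^{-1}d(hg)-g^{-1}dg$ and $g^{-1}((dg)g^{-1})g=g^{-1}dg$ recorded after \eqref{eq:qHam_2-form}; this redistribution produces the minus-side terms $\omega_{\scN^G(\RS^-)}$ together with the extra $C^+$-double contribution $\omega_{D^+_{m+2}}+\frac{1}{2}\langle\cdots\rangle$ and exactly the fusion cross term on the locus $\mu^G_{B^+_{m+2}}\cdot\mu^G_{B^-_1}=1$. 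Carrying out this bookkeeping carefully is the crux; once the restricted forms are shown to agree, the quasi-Hamiltonian reduction theorem of \cite{Alekseev-Malkin-Meinrenken_LGVMM} guarantees that the induced form on the quotient coincides with $\omega_{\scN^G(\RS)}$, completing the isomorphism of quasi-Hamiltonian $G^n$-spaces.
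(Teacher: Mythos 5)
Your plan follows the same route as the paper's own proof: show that the fibres of $\pi_C$ are exactly the $G_C$-orbits, then verify $\iota^*\omega_{\scN^G(\RS^+\amalg\RS^-)}=\pi_C^*\omega_{\scN^G(\RS)}$ using the fusion formula and Maurer--Cartan identities. Your set-theoretic half is correct and in fact slightly more complete than the paper's: the paper dismisses well-definedness and surjectivity as easy and only writes out injectivity (two preimages of the same point differ by $\sigma=(c^+_{m+2})^{-1}a^+_{m+2}\in G_C$), whereas your fibre reconstruction, with $a^+_{m+2}$ as the free parameter and all other components forced, gives well-definedness, surjectivity, and the identification of each fibre with a single free $G_C$-orbit in one stroke; the two arguments are equivalent.

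The shortfall is that the step you yourself call the crux is not carried out, and it is where essentially all of the paper's effort goes. One must pull back each summand $\omega_{D_{m+j}}+\frac12\langle(\mu_{\le m+j})^*\theta,(\mu_{m+j,1})^*\theta\rangle$ of \eqref{eq:qHam_2-form} along $\pi_C$, unwind the twist $a_{m+j}=a^+_{m+2}a^-_j$ via the pull-back formulae \eqref{eq:formula_MC1} and \eqref{eq:formula_MC2} (not only the two identities recorded after \eqref{eq:qHam_2-form}, which are special cases), and check that the resulting telescoping sum of correction terms reproduces exactly $\iota^*\omega_{D^+_{m+2}}$, the restricted plus-side term, and the fusion cross term; this is the content of the paper's \eqref{eq:q-ham_red_1}, \eqref{eq:q-ham_red_2}, and \eqref{eq:q-ham_red_3}, and it uses the level-set relations $b^+_{m+2}=(b^-_1)^{-1}$ and $\mu^+_{m+2,1}=\Ad_{a^+_{m+2}}\mu^-_{\le n-m}$ in an essential way. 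You name the right tools and the right target, so the plan would succeed, but as written the proof is incomplete at precisely the point where the mathematical content lies: ``the redistribution produces exactly the fusion cross term'' is the assertion to be proved, not a bookkeeping remark.
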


\begin{proof}
It is easy to see that $\pi_C$ is well-defined and surjective.
To see that the induced map is injective, suppose that
\[
  \pi_C \bigl( (\bsa^+,\bsb^+), (\bsa^-, \bsb^-) \bigr) =
  \pi_C \bigl( (\bsc^+,\bsd^+), (\bsc^-, \bsd^-) \bigr)
\]
for $\bigl( (\bsa^+,\bsb^+), (\bsa^-, \bsb^-) \bigr),
\bigl( (\bsc^+,\bsd^+), (\bsc^-, \bsd^-) \bigr) \in 
 \bigl(\nu^G_C \bigr)^{-1}(1)$.
Then we have
\begin{align}
  a^+_i &= c_i^+, \quad i = 2, \dots, m+1, \notag \\
  a_{m+2}^+a_j^- &= c_{m+2}^+c_j^-, \quad j = 2, \dots, n-m, \label{eq:transf_ac}\\
  b_i^+ &= d_i^+, \quad i = 1, \dots, m+1, \notag \\
  b_j^- &= d_j^-, \quad j = 2, \dots, n-m. \notag
\end{align}
Note that $b_{m+2}^+, b_1^-, d_{m+2}^+, d_1^-$ are determined by
\begin{align*}
  b_1^+ \bigl(a_2^+ b_2^+ (a_2^+)^{-1} \bigr) \dots 
  \bigl(a_{m+2}^+ b_{m+2}^+ (a_{m+2}^+)^{-1} \bigr) &= 1, \\
  b_1^- \bigl(a_2^- b_2^- (a_2^-)^{-1} \bigr) \dots 
  \bigl(a_{n-m}^+ b_{n-m}^+ (a_{n-m}^+)^{-1} \bigr) &= 1, \\
  d_1^+ \bigl(c_2^+ d_2^+ (c_2^+)^{-1} \bigr) \dots 
  \bigl(c_{m+2}^+ d_{m+2}^+ (c_{m+2}^+)^{-1} \bigr) &= 1, \\
  d_1^- \bigl(c_2^- d_2^- (c_2^-)^{-1} \bigr) \dots 
  \bigl(c_{n-m}^+ d_{n-m}^+ (c_{n-m}^+)^{-1} \bigr) &= 1.
\end{align*}
Setting $\sigma = (c_{m+2}^+)^{-1} a_{m+2}^+ \in G = G_C$, 
the condition \eqref{eq:transf_ac} is written as
\[
  c_j^- = \sigma a_j^-, \quad j = 2, \dots, n-m.
\]
This implies that
$\bigl( (\bsc^+,\bsd^+), (\bsc^-, \bsd^-) \bigr)
= \sigma \cdot \bigl( (\bsa^+,\bsb^+), (\bsa^-, \bsb^-) \bigr)$.
Hence the induced map $(\nu^G_C)^{-1}(1)/G \to \scN^G(\RS)$ is injective. 

It remains to check that 
$\iota^* \omega_{\scN^G(\RS^+ \amalg \RS^-)}
= \pi_C^* \omega_{\scN^G(\RS)}$, 
where $\iota : (\nu_C^G)^{-1}(1) \hookrightarrow \scN^G( \RS^+ \amalg \RS^-)$
is the inclusion.
From \eqref{eq:qHam_2-form}, the 2-form $\omega_{\scN^G(\RS^+ \amalg \RS^-)}$ 
on $\scN^G(\RS^+ \amalg \RS^-)$ is given by
\begin{align*}
  \omega_{\scN^G(\RS^+ \amalg \RS^-)} 
  &= \omega_{\scN^G(\RS^+)} + \omega_{\scN^G(\RS^-)}
  + \frac 12 \langle (\mu^+_{m+2,2})^*\theta, 
     (\mu_{\le n-m}^-)^* \overline{\theta} \rangle \\
  &= \sum_{i=2}^{m+2} \biggl( \omega_{D_i^+} + \frac 12 
     \langle (\mu_{\le i}^+)^* \theta , (\mu_{i,1}^+)^* \theta
     \rangle \biggr) \\
  &\quad + \sum_{j=2}^{n-m} \biggl( \omega_{D_i^-} + \frac 12 
     \langle (\mu_{\le j}^-)^* \theta , (\mu_{j,1}^-)^* \theta
     \rangle \biggr) \\
  &\quad + \frac 12 \langle (\mu_{m+2,2}^+)^*\theta, 
     (\mu_{\le n-m}^-)^* \overline{\theta} \rangle.
\end{align*}
Since $(\mu_{m+2,2}^+)^{-1} = b_{m+2}^+ = (b_1^-)^{-1} = \mu_{\le n-m}^-$ 
and $\mu_{m+1,1}^+ = \Ad_{a_{m+2}^+}\mu_{\le n-m}^-$ 
on $(\nu^G_C)^{-1}(1)$, we have
\[
  \iota^* \omega_{D_{m+2}^+} = 
  \Bigl\langle (a_{m+2}^+)^*, 
  (\mu_{\le n-m}^-)^*(\theta + \overline{\theta})
  - \Ad_{\mu_{\le n-m}^-} (a_{m+2}^+)^* \theta \Bigr\rangle
\]
and
\begin{align*}
  \iota^* \langle (\mu_{\le m+2}^+)^* \theta , 
     (\mu_{m+2,1}^+)^* \theta \rangle
  &= \Bigl\langle (\mu_{\le m+1}^+)^* \theta , 
     (\Ad_{a_{m+2}^+} \mu_{\le n-m}^-)^* \overline{\theta} \Bigr\rangle \\
  \iota^* \langle (\mu_{m+2,2}^+)^*\theta, 
     (\mu_{\le n-m}^-)^* \overline{\theta} \rangle &= 0.
\end{align*}
Then the restriction $\iota^* \omega_{\scN^G(\RS^+ \amalg \RS^-)}$ 
is given by
\begin{align*}
  \iota^* \omega_{\scN^G(\RS^+ \amalg \RS^-)} 
  &= \sum_{i=2}^{m+1} \biggl( \omega_{D_i^+} + \frac 12 
     \langle (\mu_{\le i}^+)^* \theta , (\mu_{i,1}^+)^* \theta
     \rangle \biggr) \\
  &\quad + \sum_{j=2}^{n-m} \biggl( \omega_{D_i^-} + \frac 12 
     \langle (\mu_{\le j}^-)^* \theta , (\mu_{j,1}^-)^* \theta
     \rangle \biggr) \\
  &\quad + \frac 12 \Bigl\langle (\mu_{\le m+1}^+)^*\theta, 
     (\Ad_{a_{m+2}^+} \mu_{\le n-m}^-)^* \overline{\theta} \Bigr\rangle \\
  &\quad + \frac 12 \Bigl\langle (a_{m+2}^+)^*\theta, 
     (\mu_{\le n-m}^-)^* (\theta + \overline{\theta}) -
     \Ad_{\mu_{\le n-m}^-} (a_{m+2}^+)^* \theta \Bigr\rangle.
\end{align*}
On the other hand, the pull-back of $\omega_{\scN^G(\RS)}$ is given by
\begin{align*}
  \pi_C^*\omega_{\scN^G(\RS)}
  &=  \pi_C^* \sum_{i=2}^{m+1}
     \biggl( \omega_{D_i} + \frac 12 
     \langle (\mu_{\le i})^* \theta , (\mu_{i,1})^* \theta
     \rangle \biggr) \\
  &\quad + \pi_C^* \sum_{j=2}^{n-m}
     \biggl( \omega_{D_{m+j}} + \frac 12 
     \langle (\mu_{\le {m+j}})^* \theta , (\mu_{m+j,1})^* \theta
     \rangle \biggr)
\end{align*}
with
\begin{equation}
  \pi_C^* \biggl( \omega_{D_i} + \frac 12 
     \langle (\mu_{\le i})^* \theta , (\mu_{i,1})^* \theta
     \rangle \biggr)
  = \omega_{D_i^+} + \frac 12 
     \langle (\mu_{\le i}^+)^* \theta , (\mu_{i,1}^+)^* \theta
     \rangle 
  \label{eq:q-ham_red_1}
\end{equation}
for $i = 2, \dots, m+1$.
By using 
\[
  \pi_C^* a_{m+j}^* \theta = 
  (a_{m+2}^+ a_j^-)^* \theta =
  \Ad_{(a_j^-)^{-1}}
  (a_{m+2}^+)^*\theta + (a_j^-)^* \theta
\]
for $j=2, \dots, n-m$ and formulae
\begin{align}
  (\Ad_a b)^* \theta 
  &= \Ad_{ab^{-1}} a^*\theta + \Ad_a b^* \theta 
     - a^* \overline{\theta}, 
  \label{eq:formula_MC1} \\
  (\Ad_a b)^* \overline{\theta}
  &= a^* \overline{\theta} + \Ad_a b^* \overline{\theta}
     - \Ad_{ab} a^* \theta,
  \label{eq:formula_MC2}
\end{align}
we have
\begin{align}
  \pi_C^* \omega_{D_{m+j}}
  &= \omega_{D_j^-} + \frac 12
     \Bigl\langle (a_{m+2}^+)^* \theta, 
      (\mu_{j,1}^-)^* (\theta + \overline{\theta}) 
      - \Ad_{\mu_{j,1}^-}(a_{m+2}^+)^* \theta \Bigr\rangle.
  \label{eq:q-ham_red_2}
\end{align}
Similarly,
\begin{align}
  \pi_C^* &\langle (\mu_{\le m+j})^* \theta,
  (\mu_{m+j, 1})^* \theta \rangle \notag \notag \\
  &= \Bigl\langle (\mu_{\le m+1}^+ \cdot \Ad_{a_{m+2}^+}\mu_{\le j}^-)^* 
     \theta , (\Ad_{a_{m+2}^+}\mu_{j,1}^-)^* \theta \Bigr\rangle \notag \\
  &= \Bigl\langle \Ad_{(\Ad_{a_{m+2}^+} \mu_{\le j}^-)^{-1}} 
      (\mu_{\le m+1}^+)^*\theta 
      + (\Ad_{a_{m+2}^+}\mu_{\le j}^-)^* \theta, 
      (\Ad_{a_{m+2}^+}\mu_{j,1}^-)^* \theta \Bigr\rangle \notag \\
  &= \Bigl\langle (\mu_{\le m+1}^+)^*\theta,
       \Ad_{\Ad_{a_{m+2}^+} \mu_{\le j}^-} 
       (\Ad_{a_{m+2}^+}\mu_{j,1}^-)^* \theta \Bigr\rangle \notag \\
  & \hspace{5cm} + \Bigl\langle (\Ad_{a_{m+2}^+}\mu_{\le j}^-)^* \theta, 
      (\Ad_{a_{m+2}^+}\mu_{j,1}^-)^* \theta \Bigr\rangle, \notag
\end{align}
and the formulae \eqref{eq:formula_MC1} and \eqref{eq:formula_MC2}
imply 
\begin{align}
  \pi_C^* &\langle (\mu_{\le m+j})^* \theta,
  (\mu_{m+j, 1})^* \theta \rangle \notag \notag \\
  &= \langle (\mu_{\le j}^-)^* \theta, (\mu_{j,1}^-)^* \theta \rangle 
    + \Bigl\langle (a_{m+2}^+)^* \theta, 
       \Ad_{\mu_{j,1}^-} (a_{m+2}^+)^* \theta 
       - (\mu_{j,1}^-)^* (\theta + \overline{\theta}) \Bigr\rangle \notag \\
  &\quad - \Bigl\langle (a_{m+2}^+)^* \theta, 
       \Ad_{\mu_{\le j}^-} (a_{m+2}^+)^*\theta
       - \Ad_{\mu_{\le j-1}^-} (a_{m+2}^+)^* \theta \Bigr\rangle \notag \\
  &\quad + \langle (a_{m+2}^+)^* \theta, 
       (\mu_{\le j}^-)^* (\theta + \overline{\theta}) 
       - (\mu_{\le j-1}^-)^* (\theta + \overline{\theta}) \rangle \notag \\
  &\quad +  \Bigl\langle (\mu_{\le m+1}^+)^*\theta, 
     (\Ad_{a_{m+2}^+} \mu_{\le j}^-)^* \overline{\theta} 
     - (\Ad_{a_{m+2}^+} \mu_{\le j-1}^-)^* \overline{\theta} \Bigr\rangle,
  \label{eq:q-ham_red_3}
\end{align}
where we assume that $\mu_{\le 1}^- = 1$ is a constant map.
Combining \eqref{eq:q-ham_red_1}, \eqref{eq:q-ham_red_2}, and 
\eqref{eq:q-ham_red_3}, we have
$
  \pi_C^*\omega_{\scN^G(\RS)}
  = \iota^* \omega_{\scN^G(\RS^+ \amalg \RS^-)}.
$
\end{proof}

We consider the action of 
$G_{m+2}^+ = G_{m+2}^+ \times \{1\} \subset G_{m+2}^+ \times G_1^-$
with moment map
\[
  \mu^{G}_C = \mu^G_{B^+_{m+2}} : 
\scN^{G}(\RS^+ \amalg \RS^-)\to G, \quad 
  \mu_C^G (\bsa^{\pm},\bsb^{\pm}) 
  = (b_{m+2}^+)^{-1}.
\]
Since $G^{n-3}$ acts on the $b_{m+2}^+$-component by conjugation, 
the function
\[
  \tilde{\vartheta}_C = \cos^{-1}\left( \frac 12 \tr \mu^{G}_C \right)
  : \scN^{G}(\RS^+ \amalg \RS^-) \longrightarrow \bR
\]
descends to  $\scN^G(\RS)$,
and induces a Goldman's function
$\vartheta_C : \scN_{\bsalpha}(\RS) \to \bR$.
Let $\nu_C^T = \nu_C^G|_{\scN^T(\RS^+ \amalg \RS^-)}$ be the 
restriction of the moment map to 
$\scN^T(\RS^+ \amalg \RS^-) = \scN^T(\RS^+) \times \scN^T(\RS^-)$.
Then $(\nu^T_C)^{-1}(1) \subset (\nu^G_C)^{-1}(1)$ is
preserved under the action of the maximal torus
$T^+_{m+2} \times T^-_1 \subset G^+_{m+2} \times G^-_1$.
The Hamiltonian torus action of $\vartheta_C$ is induced from the 
action of $T_{m+2}^+ \times \{1 \} \subset T^+_{m+2} \times T^-_1$ 
on $(\nu^T_C)^{-1}(1)$ (see \cite{Hurtubise-Jeffrey_RWFFPB}).
 
Now we fix a pair-of-pants decomposition 
$\Sigma = \bigcup_{i=1}^{n-2} \RS_i$ given by $n-3$ simple closed curves
$C_1, \dots, C_{n-3}$ with dual graph $\Gamma$,
and let $C_i^+$, $C^-_i$ denote the copies of $C_i$ in 
the disjoint union $\coprod_i \RS_i$.
Then the fusion product 
$\scN^G (\coprod_i \RS_i) :=
 \scN^G (\RS_1) \circledast \dots \circledast \scN^G(\RS_{n-2})$ 
has the actions of diagonal subgroups 
$G^{n-3} = \prod_i G_{C_i}$ in $G^{2(n-3)} = \prod_i G_{C_i^+} \times G_{C_i^-}$
with moment map
$\nu^{G}_{\Gamma} : \scN^{G}(\coprod_i \RS_i) \to G^{n-3}$.
We can define the gluing map
$\pi_{\Gamma} : 
\bigl(\nu^G_{\Gamma} \bigr)^{-1}(\bsone) \to \scN^G(\RS)$
in a similar manner.

\begin{corollary}
The map
$\pi_{\Gamma} : \bigl(\nu^G_{\Gamma} \bigr)^{-1}(\bsone) \to \scN^G(\RS)$
induces an isomorphism
\[
  \bigl(\nu^G_{\Gamma} \bigr)^{-1}(\bsone)/G^{n-3} \cong \scN^G(\RS)
\]
of quasi-Hamiltonian $G^n$-spaces.
The functions $\tilde{\vartheta}_{C_1}, \dots, \tilde{\vartheta}_{C_{n-3}}$ induces
the Goldman system
\[
  \Theta_{\bsalpha, \Gamma} = (\vartheta_{C_1}, \dots, \vartheta_{C_{n-3}}) :
  \scN_{\bsalpha} (\RS) \longrightarrow \bR^{n-3}.
\]
The Hamiltonian torus action of $\Theta_{\bsalpha, \Gamma}$ is given by the action of the
maximal torus 
$\prod_{i=1}^{n-3} T_{C_i^+} \subset \prod_{i=1}^{n-3} (G_{C_i^+} \times \{1\})$
on $(\nu^T_{\Gamma} \bigr)^{-1}(\bsone) \subset \scN^{T}(\coprod_i \RS_i)$.
\end{corollary}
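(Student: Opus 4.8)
The plan is to deduce this corollary from the single-curve gluing proposition established just above by iterating the construction one curve at a time. First I would use the associativity of the fusion product to present the gluing along a single curve $C_i$ as the quasi-Hamiltonian reduction along the diagonal subgroup $G_{C_i} \subset G_{C_i^+} \times G_{C_i^-}$ at the identity, exactly in the form treated there. The key observation is that the $n-3$ curves $C_1, \dots, C_{n-3}$ involve pairwise distinct boundary components, so the diagonal subgroups $G_{C_1}, \dots, G_{C_{n-3}}$ commute inside $\prod_i(G_{C_i^+} \times G_{C_i^-})$ and the components of $\nu^G_\Gamma$ are precisely the individual moment maps $\nu^G_{C_i}$.

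Next I would invoke reduction in stages for quasi-Hamiltonian spaces \cite{Alekseev-Malkin-Meinrenken_LGVMM}: reducing simultaneously along $G^{n-3} = \prod_i G_{C_i}$ at $\bsone$ agrees with reducing along the $G_{C_i}$ one at a time. Each single reduction is the isomorphism of the preceding proposition, whose output $\scN^G$ of a surface glued along one curve is again a fusion product of the remaining pairs of pants; thus the single-curve proposition applies repeatedly, and composing the $n-3$ steps by induction gives the isomorphism $\bigl(\nu^G_\Gamma\bigr)^{-1}(\bsone)/G^{n-3} \cong \scN^G(\RS)$ of quasi-Hamiltonian $G^n$-spaces.

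For the Goldman functions, I would argue that each $\tilde\vartheta_{C_i} = \cos^{-1}(\tfrac12 \tr \mu^G_{C_i})$ depends only on the conjugacy class of the boundary holonomy, hence is invariant under conjugation at the $C_i$-boundary and, since distinct curves touch distinct factors, under every $G_{C_j}$ with $j \ne i$ and under the external $G^n$-action as well. Thus each $\tilde\vartheta_{C_i}$ descends through $\pi_\Gamma$ and then through the reduction $(\mu^G)^{-1}(\scC_\bsalpha)/G^n \cong \scN_\bsalpha(\RS)$ of \ref{th:Na_red} to the Goldman function $\vartheta_{C_i}$, exactly as in the single-curve computation; collecting these yields $\Theta_{\bsalpha, \Gamma}$. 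For the torus action, I would note that the single-curve description identifies the Hamiltonian flow of $\vartheta_{C_i}$ with the action of $T_{C_i^+} \times \{1\}$ on $(\nu^T_{C_i})^{-1}(1)$, and since these circle actions act on disjoint factors they commute and assemble into the product action of $\prod_{i=1}^{n-3} T_{C_i^+} \subset \prod_{i=1}^{n-3}(G_{C_i^+} \times \{1\})$ on $\bigl(\nu^T_\Gamma\bigr)^{-1}(\bsone) \subset \scN^T(\coprod_i \RS_i)$.

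The main obstacle I anticipate is justifying reduction in stages in the group-valued (quasi-Hamiltonian) setting and checking that the successively glued pieces genuinely reassemble into the single map $\pi_\Gamma$ rather than merely an abstractly isomorphic space; once the commuting of the $G_{C_i}$ is recorded, this should reduce to associativity of fusion together with careful bookkeeping of the gluing formula, with no new two-form computation beyond the single-curve case already carried out above.
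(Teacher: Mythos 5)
Your proposal is correct and follows essentially the same route as the paper, which states this corollary without a separate proof precisely because it is the iteration of the single-curve gluing proposition over the $n-3$ separating curves of the tree $\Gamma$, using associativity of the fusion product and the single-curve descriptions of $\tilde{\vartheta}_{C_i}$ and of the torus action on $(\nu^T_{C_i})^{-1}(1)$. Your explicit appeal to reduction in stages and the observation that the diagonal subgroups $G_{C_i}$ act on disjoint factors simply spells out the bookkeeping the paper leaves implicit.
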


\begin{remark}
The reduction $(\nu^T_{\Gamma})^{-1}(\bsone)/T^{n-3}$ of 
the $T$-extended moduli space $\scN^T(\coprod_i \RS_i)$ 
is not homeomorphic to $\scN^T(\RS)$ on the locus where
holonomies along any components of $\partial \RS_i$ are central for some $i$.
\end{remark}

\section{Isomorphisms of Goldman systems} \label{sc:isom_Goldman}

Fix a generic parabolic weight 
$\bsalpha \in (0, 1/2)^n$ such that $|\bsalpha| < 1$. 
Then $t \bsalpha = (t \alpha_1, \dots, t \alpha_n)$ and $\bsalpha$ 
are in the same chamber for $t \in (0,1]$, and hence 
$\scN_{t \bsalpha}(\RS)$ is diffeomorphic to 
$\scN_{\bsalpha}(\RS)$ for $t \in (0,1]$.
Note that the images 
$\Delta_{\Gamma}(t\bsalpha) = 
\Theta_{t\bsalpha, \Gamma}(\scN_{t \bsalpha}(\RS))$
of the Goldman systems are related by scalings
$\Delta_{\Gamma}(t\bsalpha) = t \Delta_{\Gamma}(\bsalpha)$.
In this section we prove the following theorem.

\begin{theorem} \label{thm:isom1}
Suppose that $\bsalpha$ satisfies the above condition.
Then for each $\Gamma$, there exists a family of symplectomorphism
\[
  \psi_t : (\scN_{\bsalpha}(\RS) , \omega_{\scN_{\bsalpha}}) 
  \to (\scN_{t \bsalpha}(\RS), (1/t) \omega_{\scN_{t \bsalpha}})
\]
such that 
$(1/t) \psi_t^* \Theta_{t \bsalpha, \Gamma} = \Theta_{\bsalpha, \Gamma}$.
Namely, 
\begin{align} \label{eq:fN}
  \frakN(\RS) 
  = \bigcup_{t \in (0,1]} \scN_{t \bsalpha} (\RS)
  \to (0,1]
\end{align}
is trivial as a family of symplectic manifolds equipped with
completely integrable systems.
\end{theorem}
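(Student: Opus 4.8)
The plan is to construct $\psi_t$ from the action--angle coordinates furnished by the Goldman system, scaling the action directions by $t$ and leaving the angle directions fixed. The first thing to record is that for $t\in(0,1]$ the weight $t\bsalpha$ never crosses a wall: on a wall $H_{I,0}$ the ray $t\bsalpha$ would force $\bsalpha$ itself onto $H_{I,0}$, against genericity, while a wall $H_{I,k}$ with $k\ge 1$ is unreachable because $\left|\sum_{j\in J}\alpha_j-\sum_{i\in I}\alpha_i\right|\le|\bsalpha|<1\le k$. Hence all $\scN_{t\bsalpha}(\RS)$ are smooth and mutually diffeomorphic by \pref{cor:diffeo_Na}, and their Goldman polytopes satisfy $\Delta_\Gamma(t\bsalpha)=t\,\Delta_\Gamma(\bsalpha)$, being cut out by the same triangle inequalities rescaled by $t$.

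First I would treat the open dense locus $\Theta_{\bsalpha,\Gamma}^{-1}(\mathring\Delta_\Gamma(\bsalpha))$. There $\vartheta_{C_1},\dots,\vartheta_{C_{n-3}}$ are the components of an effective Hamiltonian $T^{n-3}$--moment map, the torus being generated by the Goldman twist flows along $C_1,\dots,C_{n-3}$ of \pref{sc:gluing}. Because $\mathring\Delta_\Gamma(\bsalpha)$ is contractible, the fibration is a trivial principal $T^{n-3}$--bundle, so a choice of Lagrangian section identifies this locus symplectically with $\mathring\Delta_\Gamma(\bsalpha)\times T^{n-3}$ carrying $\sum_i d\vartheta_{C_i}\wedge d\phi_i$, and similarly for $t\bsalpha$. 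The key observation is that the twist flow along $C_i$ is an intrinsic operation on holonomies, independent of the weight, and that with the normalisation $\vartheta_{C_i}=\cos^{-1}\!\left(\tfrac12\tr(\cdots)\right)$ the generated circle has a weight--independent period; this canonically identifies the angle coordinates and their periods for $\scN_{\bsalpha}(\RS)$ and $\scN_{t\bsalpha}(\RS)$. I then set $\psi_t(\vartheta,\phi)=(t\vartheta,\phi)$, which lands in the open part of $\scN_{t\bsalpha}(\RS)$ since $t\vartheta\in\mathring\Delta_\Gamma(t\bsalpha)$; a one--line computation gives $\psi_t^*\big(\sum_i d\vartheta_{C_i}\wedge d\phi_i\big)=t\sum_i d\vartheta_{C_i}\wedge d\phi_i$, hence $(1/t)\psi_t^*\omega_{\scN_{t\bsalpha}}=\omega_{\scN_{\bsalpha}}$ and $(1/t)\psi_t^*\Theta_{t\bsalpha,\Gamma}=\Theta_{\bsalpha,\Gamma}$.

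The hard part will be extending $\psi_t$ across the singular locus $\Theta_{\bsalpha,\Gamma}^{-1}(\partial\Delta_\Gamma(\bsalpha))$, where some triangle inequality degenerates, the corresponding holonomy becomes reducible, and the torus orbit collapses. This locus has only real codimension two, so the extension is not automatic and must be read off a local normal form. I would use the equivariant symplectic normal form of the Goldman system along these strata, which for $|\bsalpha|<1$ is modelled --- via the gluing picture of \pref{sc:gluing} --- on $\bC^k\times\scN'$, where the $\bC^k$--factor records the $k$ collapsing twist directions with rotation moment maps of the form $\vartheta_{C_i}-(\text{boundary value})=\tfrac12|z_i|^2$, and $\scN'$ is the moduli space of the surface obtained by pinching the degenerated curves. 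In such a chart $\psi_t$ becomes $(z_i)\mapsto(\sqrt{t}\,z_i)$ on the $\bC^k$--factor --- a smooth (indeed holomorphic) map that scales these moment maps by $t$ and pulls back $\tfrac{\sqrt{-1}}2\sum_i dz_i\wedge d\bar z_i$ to its $t$--multiple --- together with the corresponding map on $\scN'$. Running this as an induction on the number of marked points, with base cases the pair of pants (where $\scN_{\bsalpha}$ is a point) and $n=4$ (where $\scN_{\bsalpha}\cong S^2$ and the statement is the standard rescaling of the $S^1$--action on $S^2$), shows that the open--part map extends to a global smooth symplectomorphism with the required properties. Since each ingredient depends smoothly on $t$, the $\psi_t$ assemble into the desired trivialisation of the family \eqref{eq:fN}.
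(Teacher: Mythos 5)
Your reduction to action--angle coordinates inverts the order of difficulty, and the two steps you lean on are exactly the ones that fail. First, the map $\psi_t(\vartheta,\phi)=(t\vartheta,\phi)$ is only defined after choosing Lagrangian sections of the torus fibrations over $\mathring\Delta_\Gamma(\bsalpha)$ and over $\mathring\Delta_\Gamma(t\bsalpha)$ for every $t$. Your ``key observation'' that the twist flows canonically identify the angle coordinates is not a construction: the twist flows act on each moduli space $\scN_{t\bsalpha}(\RS)$ separately and give no identification of points of $\scN_{\bsalpha}(\RS)$ with points of $\scN_{t\bsalpha}(\RS)$. Any two admissible choices of sections change $\psi_t$ by a $t$-dependent fibrewise torus translation, and whether the resulting map extends continuously (let alone smoothly and symplectically) over the degenerate fibres depends entirely on that choice; producing one coherent choice for which the extension exists is essentially the content of the theorem, so at this point nothing is yet proved.

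Second, the local model you invoke for the extension is wrong on part of the singular locus. Near a facet where a triangle inequality degenerates while all holonomies stay non-central, the functions $\vartheta_{C_i}$ are smooth and the toric normal form $\vartheta_{C_i}-c=\tfrac12|z_i|^2$ is available. But for $n\ge 5$ and an open set of generic $\bsalpha$ with $|\bsalpha|<1$, there are also points of $\scN_{\bsalpha}(\RS)$ where the holonomy around some $C_i$ is central: any curve of the decomposition with at least three marked points on one side admits configurations with $g_{i_k}\cdots g_{i_k+n_k}=1$ whenever the corresponding weights satisfy the triangle inequality. At such points $\vartheta_{C_i}$ behaves like the norm of the momentum on $T^*S^2$ near the zero section --- continuous but non-smooth along a Lagrangian-type stratum, with no smooth extension of its Hamiltonian flow --- and not like $\tfrac12|z_i|^2$ transverse to a symplectic stratum; neither your normal form nor the scaling $z_i\mapsto\sqrt{t}\,z_i$ applies there, and the induction on $n$ never addresses these strata. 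The paper's proof is organized precisely to avoid any local analysis at singular fibres: $\psi_t$ is constructed globally, upstairs on the extended moduli spaces of Sections \ref{sc:extended_moduli} and \ref{sc:gluing}, as the horizontal lift, for an invariant metric adapted to the stratification of Corollary \ref{cor:stratification}, of the trivialization $\bsc\mapsto\bsc^t$ of the family of conjugacy classes $\frakC\to(0,1]$. By construction this map scales holonomies by $e^x\mapsto e^{tx}$, hence satisfies $(1/t)\psi_t^*\Theta_{t\bsalpha,\Gamma}=\Theta_{\bsalpha,\Gamma}$, and it is equivariant for the tori $\prod_i T_{C_i^+}$ generating the angle flows; it is therefore a symplectomorphism on the open dense set where action--angle variables exist, and the identity $(1/t)\psi_t^*\omega_{\scN_{t\bsalpha}}=\omega_{\scN_{\bsalpha}}$ propagates to all of $\scN_{\bsalpha}(\RS)$ by density and continuity, with no normal form needed at the bad points. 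If you want to salvage your approach, you would have to supply both a section choice compatible with all degenerations and a normal form at the central-holonomy strata, which together amount to redoing the paper's construction.
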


We first consider a decomposition 
$\RS = \RS^+ \cup_C \RS^-$ given by a single simple closed curve
as in Section \ref{sc:gluing}.

\begin{lemma}
For $t \in (0,1]$, there exists a diffeomorphism
$\psi_t : \scN_{\bsalpha}(\RS) \to \scN_{t \bsalpha}(\RS)$
such that $(1/t) \psi_t^* \vartheta_{t\bsalpha, C} = \vartheta_{\bsalpha, C}$.
\end{lemma}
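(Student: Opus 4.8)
The plan is to build $\psi_t$ from the gluing description of $\scN_{\bsalpha}(\RS)$ along $C$, realising it as a scaling in the direction transverse to the level sets of $\vartheta_{\bsalpha,C}$. First I would reduce in stages. Cutting $\RS=\RS^{+}\cup_C\RS^{-}$ and reducing the two extended moduli spaces $\scN^{G}(\RS^{\pm})$ at the boundary conjugacy classes $\scC_{\bsalpha^{\pm}}$ coming from the original marked points, I obtain two quasi-Hamiltonian $G_C$-spaces whose $G$-valued moment maps $\Phi^{\pm}$ record the holonomy around $C$; by the Proposition of Section~\ref{sc:gluing}, $\scN_{\bsalpha}(\RS)$ is the fusion reduction obtained by imposing that these two $C$-holonomies be inverse (the condition $\nu_C^G=1$) and quotienting by the diagonal $G_C$. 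Since $\mu_C^{G}=(b_{m+2}^{+})^{-1}$ is exactly the holonomy around $C$ and the trace is conjugation-invariant, the Goldman function $\vartheta_{\bsalpha,C}=\cos^{-1}(\frac12\tr\mu_C^{G})$ reads off nothing but the angle $\tau\in[0,1/2]$ of the conjugacy class $\scC_{\tau}$ containing this holonomy.

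Next I would foliate by $\tau$. On the open locus where the $C$-holonomy is regular, that is $\tau\in(0,1/2)$, the function $\vartheta_{\bsalpha,C}$ generates a free Hamiltonian $S^1$-action (the twist $T_{m+2}^{+}$ of Section~\ref{sc:gluing}), and reduction at a fixed angle gives
\[
  \vartheta_{\bsalpha,C}^{-1}(\tau)\big/ S^1
  \;\cong\;
  \scN_{(\bsalpha^{+},\tau)}(\RS^{+})\times\scN_{(\bsalpha^{-},\tau)}(\RS^{-}),
\]
where $\scN_{(\bsalpha^{\pm},\tau)}(\RS^{\pm})=(\Phi^{\pm})^{-1}(\scC_{\tau})/G_C$ is the moduli space of the piece $\RS^{\pm}$ with the extra weight $\tau$ assigned to its $C$-boundary. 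Thus over each component of the interior of the image interval, $\scN_{\bsalpha}(\RS)$ is an $S^1$-bundle whose fibrewise quotient is this product.

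The key point is that the inequalities cutting out chambers for the pieces $\RS^{\pm}$ are homogeneous of degree one in the weights $(\bsalpha^{\pm},\tau)$: by the theorem of Goldman and of Jeffrey--Weitsman recalled in Section~\ref{sc:Goldman_system}, in the range $|\bsalpha|<1$ only the triangle inequalities are active, the spherical upper bounds $2-(\cdots)$ being inactive. Hence $(\bsalpha^{\pm},\tau)$ and $(t\bsalpha^{\pm},t\tau)$ lie in the same chamber, and in particular the achievable range of $\tau$ scales, consistently with $\Delta_\Gamma(t\bsalpha)=t\,\Delta_\Gamma(\bsalpha)$. The argument proving Corollary~\ref{cor:diffeo_Na} applies verbatim to the pieces, so there are diffeomorphisms $\scN_{(\bsalpha^{\pm},\tau)}(\RS^{\pm})\cong\scN_{(t\bsalpha^{\pm},t\tau)}(\RS^{\pm})$ depending smoothly on $\tau$. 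Rescaling the angle by $\tau\mapsto t\tau$ on the base, keeping the twist coordinate, and applying these fibrewise diffeomorphisms then defines $\psi_t$ on the open dense locus $0<\vartheta_{\bsalpha,C}<1/2$; by construction $\vartheta_{t\bsalpha,C}\circ\psi_t=t\,\vartheta_{\bsalpha,C}$, which is exactly $(1/t)\psi_t^{*}\vartheta_{t\bsalpha,C}=\vartheta_{\bsalpha,C}$.

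The hard part will be to extend $\psi_t$ across the singular fibres, over the endpoints of the interval where the $C$-holonomy becomes central ($g_C=\pm1$, so the Goldman $S^1$-action acquires fixed points) or where one of the pieces meets a wall and drops dimension; there the product-of-reductions description degenerates and the $S^1$-bundle cannot simply be trivialised. I would handle this with the standard local normal form for the $S^1$-action near its fixed locus: near such an endpoint the space looks like a disc bundle in which $\tau$ is the squared radial coordinate, and the base rescaling $\tau\mapsto t\tau$ is compatible with a fibrewise radial diffeomorphism of this model. Checking that the interior map and these local models patch to a single smooth diffeomorphism of the closed manifolds $\scN_{\bsalpha}(\RS)$ and $\scN_{t\bsalpha}(\RS)$ — both smooth since $\bsalpha$ and $t\bsalpha$ are generic and in the same chamber — is the delicate step; the interior construction and the scale-invariance of the chamber structure are routine by comparison.
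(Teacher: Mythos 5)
Your route is genuinely different from the paper's: you fibre $\scN_{\bsalpha}(\RS)$ by the Goldman Hamiltonian $\vartheta_{\bsalpha,C}$ itself and try to build $\psi_t$ level-by-level, whereas the paper works upstairs on the fusion product $\scN^G(\RS^+ \amalg \RS^-)$, trivialises the family of boundary conjugacy classes $\frakC = \bigcup_{t} \scC_{t\bsalpha}$ by $c \mapsto c^t$ (this is where $|\bsalpha|<1$ enters), takes the horizontal lift of this trivialisation with respect to a $G^{n+2}$-invariant Riemannian metric adapted to the Whitney stratification of Corollary \ref{cor:stratification}, and descends to the quotient by equivariance; the identity $(1/t)\psi_t^*\vartheta_{t\bsalpha,C} = \vartheta_{\bsalpha,C}$ is then immediate because the lift scales the holonomy coordinates $e^{\bsx^\pm} \mapsto e^{t\bsx^\pm}$, in particular $e^{x^+_{m+2}} \mapsto e^{t x^+_{m+2}}$. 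As it stands, however, your proposal has a genuine gap at precisely what you call the delicate step, and the fix you propose does not cover the degenerations that actually occur.

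Concretely, three things go wrong. First, $\vartheta_{\bsalpha,C}$ has singular fibres at \emph{interior} values of its image, not only at endpoints: whenever a piece, say $\RS^-$, carries at least three of the original marked points, the weight $(\bsalpha^-,\tau)$ crosses walls $\tau = \bigl|\sum_i \epsilon_i \alpha_i^-\bigr|$ at values interior to the image interval (for instance $n=5$, all $\alpha_i=\alpha$, and $C$ separating two points from three: $\tau=\alpha$ is an interior critical value inside $[0,2\alpha]$). At such fibres the level set is singular, the reduced spaces $\scN_{(\bsalpha^-,\tau)}(\RS^-)$ undergo wall-crossing, and a disc-bundle normal form around an $S^1$-fixed locus does not describe this degeneration at all. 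Second, at the endpoints where the $C$-holonomy becomes central, $\vartheta_{\bsalpha,C}$ is not even smooth, and the transverse model is a cone: $\tau$ behaves like the radial coordinate $|x|$ on a transverse $\bR^3$ (codimension three), not like a squared radius on a disc bundle (codimension two), so the normal form you invoke is the wrong one there as well. Third, even over regular values, ``keeping the twist coordinate'' and choosing the fibrewise diffeomorphisms ``smoothly in $\tau$'' presuppose a connection on the $S^1$-bundle and a smoothly varying family of identifications; Corollary \ref{cor:diffeo_Na} produces one diffeomorphism for each fixed weight, not such a family, and constructing one that patches smoothly across all the degenerate fibres is exactly the missing content. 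The paper supplies it in one stroke by the equivariant horizontal-lift construction upstairs, where the stratified structure of Corollary \ref{cor:stratification} controls the singular loci; if you carry out your singular-fibre analysis honestly, you are led back to that argument, at which point the detour through the $\vartheta_C$-fibration is unnecessary.
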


\begin{proof}
Let $\frakC = \bigcup_{t \in (0,1]} \scC_{t \bsalpha} \subset G^n$
be the family of conjugacy classes with projection
$\pi_{\frakC}: \frakC \to (0,1]$.
Then the total space $\frakN (\RS)$
of the family \eqref{eq:fN} is given by
\[
  \frakN(\RS) = (\mu^G)^{-1}( \frakC) / G^n,
\]
where $\mu^G : \scN^G(\RS) \to G^n$ is the moment map.
Since $|\bsalpha|<1$, the family $\frakC$ is trivialized by
\begin{equation}
  \scC_{\bsalpha} \longrightarrow \scC_{t \bsalpha},
  \quad
  \bsc = (c_1, \dots, c_n) \longmapsto
  \bsc^t = ((c_1)^t, \dots, (c_n)^t),
  \label{eq:triv_sing}
\end{equation} 
where $c^t = g e^{t x} g^{-1}$ for 
$c = g e^{x} g^{-1} \in \scC_{\alpha}$ with $x \in A_{\frakt}$.

Let 
\[
  \mu^G_{\partial \RS} 
  = (\mu^G_{B_1^+}, \dots, \mu^G_{B_{m+1}^+}, 
     \mu^G_{B_2^-}, \dots, \mu^G_{B_{n-m}^+})
  : \scN^G(\RS^+ \amalg \RS^-) \longrightarrow G^n
\]
be the moment map corresponding to the boundary components of $\RS$, 
and set 
\begin{align*}
  \widetilde{\frakN}(\RS^+ \amalg \RS^-) 
  &= (\mu^G_{\partial \RS})^{-1}(\frakC) \\
  &= \bigcup_{t \in (0,1]} \biggl( \bigcup_{\alpha_{m+2}^+, \, \alpha_1^-} 
    \scN^G(\RS^+ ; t \bsalpha^+)
  \times \scN^G(\RS^- ; t \bsalpha^-) \biggr),
\end{align*}
where $\bsalpha^+ = (\alpha_1^+, \dots, \alpha^+_{m+2})$,
$\bsalpha^- = (\alpha_1^-, \dots, \alpha^-_{n-m})$
with $(\alpha_1^+, \dots, \alpha^+_{m+1}) = (\alpha_1, \dots, \alpha_{m+1})$, 
$(\alpha_2^-, \dots, \alpha^-_{n-m}) = (\alpha_{m+2}, \dots, \alpha_{n})$.
This space has an action of 
$G^{n+2} = \prod_{i=1}^{m+2} G^+_i \times \prod_{i=1}^{n-m} G^-_i$
and a $G^{n+2}$-invariant stratification induced from those on 
$\scN^G(\RS^{\pm}; \bsalpha^{\pm})$.
Note that the lower dimensional strata of 
$\widetilde{\frakN}(\RS^+ \amalg \RS^-)$  has the form
\[
  \bigcup_{t \in (0,1]}
  \scN^G(\RS^+ ; t\bsalpha^+)
  \times \Sing 
  \scN^G(\RS^- ; t\bsalpha^-)
\]
with $\alpha_1^- = \sum_{i = 2}^{n-m} \epsilon_i \alpha_i^-$ , or
\[
  \bigcup_{t \in (0,1]}
  \Sing
  \scN^G(\RS^+ ; t\bsalpha^+)
  \times
  \scN^G(\RS^- ; t\bsalpha^-)
\]
with $\alpha_{m+2}^+ = \sum_{i=1}^{m+1} \epsilon_i \alpha_i^+$
for some $\epsilon_i \in \{\pm 1 \}$.
From Proposition \ref{pr:crit_mu} and $|\bsalpha^{\pm}|<1$, 
trivialization \eqref{eq:triv_sing} lifts to that on 
$\bigcup_{t \in (0,1]} \Sing \scN^G(\RS^{\pm} ; t\bsalpha^{\pm})$
given by 
\[
  \Sing \scN^G(\RS^{\pm} ; \bsalpha^{\pm}) \longrightarrow
  \Sing \scN^G(\RS^{\pm} ; t\bsalpha^{\pm}),
  \quad
  (\bsa, \bsb) \longmapsto (\bsa, \bsb^t).
\]
From Corollary \ref{cor:stratification}, the space
$\widetilde{\frakN}(\RS^+ \amalg \RS^-)$ is locally homeomorphic to 
$V \times C(L)$ for some open set $V$ in a strata and 
a cone $C(L) = ([0,\infty) \times L)/(\{0 \} \times L)$ 
over a submanifold $L$.
We fix a $G^{n+2}$-invariant Riemannian metric 
on $\widetilde{\frakN}(\RS^+ \amalg \RS^-)$
such that it has the form  
$g_V + dr^2 + r^2 g_L$
on each neighborhood $V \times C(L)$ of the singular locus,
where $g_V$ and $g_L$ are $G^{n+2}$-invariant Riemannian metrics 
on $V$ and $L$, respectively,
and $r \in [0, \infty)$. 

Let $\nu_C^G : \scN^G(\RS^+ \amalg \RS^-) \to G_C$ be the moment map 
of the action of the diagonal subgroup $G_C \subset G_{m+2}^+ \times G_1^-$,
and define
\[
  \frakN^G(\RS^+ \amalg \RS^-) 
  = (\mu^G_{\partial \RS}, \nu_C^G)^{-1}(\frakC \times \{1\}) 
  = (\nu^G_C)^{-1}(1) \cap \widetilde{\frakN}(\RS^+ \amalg \RS^-)
\]
so that the family $\frakN(\RS) \to (0,1]$ is given by
\[
  \pi_{\frakC} \circ \mu^G_{\partial \RS} : 
  \frakN(\RS) \cong \frakN^G(\RS^+ \amalg \RS^-) / (G^{n} \times G_C)
  \longrightarrow \frakC
  \longrightarrow (0,1].
\]
Then the horizontal lift of the trivialization \eqref{eq:triv_sing}
of $\frakC \to (0,1]$ gives a $G^{n+1}$-equivariant trivialization
\begin{align}
  \psi_t: 
  \frakN^G(\RS^+ \amalg \RS^-)_1 
  & \longrightarrow 
  \frakN^G(\RS^+ \amalg \RS^-)_t, \notag \\ 
  \bigl( (\bsa^+, e^{\bsx^+}), (\bsa^-, e^{\bsx^-}) \bigr)
  & \longmapsto
  \bigl( (\bsc^+(\bsa, \bsx,t), e^{t \bsx^+}), 
         (\bsc^-(\bsa, \bsx,t), e^{t \bsx^-}) \bigr) \label{eq:triv_N^G}
\end{align}
of the family $\widetilde{\frakN}(\RS^+ \amalg \RS^-) \to (0,1]$
preserving the stratification,
where 
$\frakN^G(\RS^+ \amalg \RS^-)_t = 
 (\mu^G_{\partial \RS}, \nu^G_C)^{-1}(\scC_{t \bsalpha} \times \{ 1 \})$ 
is the fiber over $t \in (0,1]$.
Since $\psi_t$ is $G^{n+1}$-equivariant, it descends to a diffeomorphism
$\psi_t : \scN_{\bsalpha}(\RS) \to \scN_{t\bsalpha}(\RS)$.
From the construction of $\psi_t$, we have
\[
  \frac 1t \psi_t^* \widetilde{\vartheta}_{t, \bsalpha C} 
  = \frac 1t \psi_t^* \cos^{-1} \Bigl( \frac 12 \tr e^{x^+_{m+2}} \Bigr)
  = \frac 1t \cos^{-1} \Bigl( \frac 12 \tr e^{t x^+_{m+2}} \Bigr)
  = \widetilde{\vartheta}_{\bsalpha, C},
\]
which completes the proof.
\end{proof}

\begin{remark}
From \eqref{eq:triv_N^G}, the flow $\psi_t$ preserves the subfamily
\begin{align*}
  \frakN^T(\RS^+ \amalg \RS^-) 
  &= \bigcup_{t \in (0,1]} 
    (\mu^T_{\partial \RS}, \nu_T^G)^{-1}
    (e^{-t \alpha_1 x_0}, \dots, e^{-t \alpha_1 x_0}, 1) \\ \notag
  &= \frakN^G(\RS^+ \amalg \RS^-) \cap \scN^T(\RS^+ \amalg \RS^-)
\end{align*}
of $\scN^G(\RS^+ \amalg \RS^-)$.
The flow $\psi_t$ restricted to $\frakN^T(\RS^+ \amalg \RS^-)$ 
is also equivariant under the action of 
$T_{m+2}^+ \times \{1\} \subset G_{m+2}^+ \times G_1^-$,
and hence $\psi_t : \scN_{\bsalpha}(\RS) \to \scN_{t \bsalpha}(\RS)$ is
equivariant under the action of the Hamiltonian $S^1$-action of $\vartheta_C$.
\end{remark}

\begin{proof}[Proof of Theorem \ref{thm:isom1}]
Let $\RS = \bigcup_{i=1}^{n-2} \RS_i$ be the pair-of-pants decomposition
given by $\Gamma$.
For the group valued moment map
\[
  \mu^G = (\mu^G_{\partial \RS} , \nu_{C_1}^G, \dots, \nu_{C_{n-3}}^G)
  : \scN^G(\RS_1 \amalg \dots \amalg \RS_{n-2})
  \longrightarrow G^n \times G^{n-3}
\]
of the $G^n \times G^{n-3}$-action, we define
$\frakN^G(\coprod_i \RS_i) = (\mu^G)^{-1}(\frakC \times \{ \bsone \})$
and 
$\frakN^T(\coprod_i \RS_i) = \frakN^G(\coprod_i \RS_i) \cap \scN^T(\coprod_i \RS_i)$.
By applying the above argument, we obtain a trivialization 
$\psi_t : \frakN^G(\coprod_i \RS_i)_1 \to \frakN^G(\coprod_i \RS_i)_t$
of $\frakN^G(\coprod_i \RS_i)$
which induces trivializations of 
$\frakN^T(\coprod_i \RS_i)$
and $\frakN(\RS)$, and satisfies
\[
  \frac 1t \psi_t^* \Theta_{t\bsalpha, \Gamma} = 
  \Theta_{\bsalpha, \Gamma}.
\]
In particular, $\psi_i$ preserves the action variables on $\scN_{\bsalpha}(\RS)$.

The Hamiltonian torus action of the Goldman system, which is 
defined on an open dense subset $U \subset \scN_{\bsalpha}(\RS)$, 
is induced from the action of a maximal torus 
$\prod_{i=1}^{n-3} (T_{C_i^+} \times \{1\})$ in 
$\prod_i (G_{C_i^+} \times \{1\}) \subset \prod_i (G_{C_i^+} \times G_{C_i^-})$.
Since the trivialization
$\psi_t : \frakN^T(\coprod_i \RS_i)_1 \to \frakN^T(\coprod_i \RS_i)_t$
is $\prod_i (T_{C_i^+} \times T_{C_i^-})$-equivariant, 
the Hamiltonian torus action of the Goldman systems are preserved by $\psi_t$.
This means that $\psi_t : \scN_{\bsalpha}(\RS) \to \scN_{t \bsalpha}(\RS)$ 
 preserves angle variables.
Hence $(1/t) \psi_t^* \omega_{\scN_{t \bsalpha}}$ coincides with
$\omega_{\scN_{\bsalpha}}$ on $U$.
Since $\psi_t$ is a diffeomorphism 
and $U$ is dense,
we have $(1/t) \psi_t^* \omega_{\scN_{t \bsalpha}} 
= \omega_{\scN_{\bsalpha}}$
on $\scN_{\bsalpha}(\RS)$.
\end{proof}

\section{Goldman systems and bending systems} \label{sc:Goldman-bending}

We see in \pref{th:comparison} that 
$\scN_{\bsalpha}$ is isomorphic to $\scM_{\bsalpha}$ as complex manifolds
if $|\bsalpha|<1$.
On the other hand, 
Jeffrey \cite{Jeffrey_EMS} proved the following
by using the $\frakg$-extended moduli space.

\begin{proposition}[{Jeffrey \cite[Theorem 6.6]{Jeffrey_EMS}}]
\label{prop:Jeffrey_EMS}
For sufficiently small $\bsalpha \in (0,1/2)^n$, 
the moduli space $\scN_{\bsalpha}(\RS)$ is symplectomorphic to $\scM_{\bsalpha}$.
\end{proposition}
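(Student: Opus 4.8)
The plan is to exhibit both $\scN_{\bsalpha}(\RS)$ and $\scM_{\bsalpha}$ as symplectic reductions of the \emph{same} ambient data and to compare the two reductions by a Moser argument that is valid precisely in the regime of small $\bsalpha$. On one side, the $\frakg$-extended moduli space $\scN^{\frakg}(\RS)$ carries a closed $G^n$-invariant $2$-form, non-degenerate on an open dense subset, with moment map $\mu^{\frakg}(\bsa,\bsx) = -\bsx$, and its reduction $(\mu^{\frakg})^{-1}(\scO_{\bsalpha})/G^n$ is $\scN_{\bsalpha}(\RS)$. On the other side I would introduce the \emph{linear model}
\[
 \scL = \{ (\bsa, \bsx) \in G^{n-1} \times \frakg^n \mid
   x_1 + \Ad_{a_2} x_2 + \dots + \Ad_{a_n} x_n = 0 \},
\]
with the same $G^n$-action and the same moment map $-\bsx$, equipped with the closed invariant $2$-form obtained as the leading part of the form on $\scN^{\frakg}(\RS)$. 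A reduction-in-stages computation then identifies the reduction of $\scL$ at $\scO_{\bsalpha}$ with $\scM_{\bsalpha}$: writing $y_i = \Ad_{a_i} x_i$ trivializes $\scL$ as $G^{n-1} \times \{ \sum_i y_i = 0 \}$, the free action of $G_2 \times \dots \times G_n$ removes the $G^{n-1}$ factor and sets $\bsa = \bsone$, and the residual diagonal $G$ then acts on $\{ y_i \in \scO_{\alpha_i},\ \sum_i y_i = 0 \}$, whose quotient is exactly the polygon space $\mu^{-1}(0)/G = \scM_{\bsalpha}$.

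The comparison of the two ambient spaces is the heart of the argument. Both $\scN^{\frakg}(\RS)$ and $\scL$ are cut out of $G^{n-1} \times \frakg^n$ by a $G^n$-equivariant equation; by the Baker--Campbell--Hausdorff formula these equations, together with the two $2$-forms, agree to first order along $\{\bsx = 0\}$, the discrepancy being of order $|\bsx|^2$. I would therefore fix a $G^n$-invariant neighborhood $V$ of $\{\bsx = 0\}$ on which $\exp$ is a diffeomorphism, on which the extended $2$-form is non-degenerate, and which avoids the bad locus $e^{x_i} = -1$; for $\bsalpha$ small the level sets $\{x_i \in \scO_{\alpha_i}\}$ lie inside $V$. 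Fibrewise over the $\frakg^n$-coordinate $\bsx$, and adjusting only $\bsa$, the closeness of the defining equations yields a $G^n$-equivariant diffeomorphism $\scN^{\frakg}(\RS) \cap V \to \scL \cap V$ that is the identity on $\bsx$, hence intertwines the moment maps $-\bsx$. Pulling back gives two closed invariant $2$-forms on $\scL \cap V$ with the \emph{same} moment map; they are cohomologous because $V$ retracts onto $\{\bsx = 0\} \cong G^{n-1} \simeq (S^3)^{n-1}$ with $H^2((S^3)^{n-1}) = 0$, and their difference contracts to zero on the fundamental vector fields. The resulting Moser flow can then be chosen $G^n$-equivariant and $\bsx$-preserving, so it descends to the reductions and yields the symplectomorphism $\scN_{\bsalpha}(\RS) \cong \scM_{\bsalpha}$.

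The hard part will be controlling the places where the picture degenerates and making the Moser isotopy descend. The $2$-form on $\scN^{\frakg}(\RS)$ is only non-degenerate on an open dense subset, $\scN^{\frakg}(\RS)$ is singular along $\{ e^{x_i} = -1\ \forall i \}$, and the $G^n$-action on the level set is non-free exactly along the walls (\pref{pr:crit_mu}, \pref{pr:non-free}); all of these must be kept away from the interpolation region, which is what smallness of $\bsalpha$ buys, since $e^{x_i} = -1$ forces $\alpha_i = 1/2$. The genuinely delicate point is arranging the Moser construction to be simultaneously $G^n$-equivariant and moment-map-preserving, so that it passes to the quotient: tracing through the Moser identity, $\phi_s$ preserves $-\bsx$ if and only if the chosen primitive $\beta$ of the difference form satisfies $\iota_{v_\xi}\beta = 0$, i.e. $\beta$ is \emph{horizontal}, and this is possible here only because the two forms induce the same moment map $-\bsx$ rather than merely cohomologous ones.

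A secondary technical point, which I would verify first, is that the leading-order form on $\scL$ is genuinely closed with moment map $-\bsx$, and that the induced form on the reduction $\scL/\!\!/_{\scO_{\bsalpha}} G^n$ is the Kostant--Kirillov form of $\scM_{\bsalpha}$ and not a rescaling of it; combined with the scaling freedom recorded in \pref{thm:isom1}, this is exactly what pins down the normalization under which the symplectomorphism holds for all sufficiently small $\bsalpha$ simultaneously.
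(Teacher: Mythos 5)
Your overall architecture is in fact the same as the paper's: your linear model $\scL$ is precisely a trivialization of the Marle--Guillemin--Sternberg local model $G^n \times_G (\frakg^n/\frakg)^*$ that the paper uses (via the section $(a_2,\dots,a_n) \mapsto [1,a_2^{-1},\dots,a_n^{-1}]$ and $y_i = -\Ad_{a_i}x_i$), and your reduction-in-stages identification of the reduction of $\scL$ at $\scO_{\bsalpha}$ with $\scM_{\bsalpha}$ is literally the chain of identifications in the paper's proof. Your Moser analysis also contains the correct key observations: the flow preserves $\bsx$ iff the primitive $\beta$ is horizontal, and such a primitive is available here because after averaging $\iota_{v_\xi}\beta$ is an invariant constant, hence zero since $\su^n$ is semisimple. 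The difference is that the paper simply invokes the MGS normal-form theorem \cite{Guillemin-Sternberg_NFMM, Marle}, whereas you attempt to prove the needed normal form by hand, and that is where the proposal has a genuine gap.

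The gap is the asserted $G^n$-equivariant diffeomorphism $\scN^{\frakg}(\RS)\cap V \to \scL \cap V$ that is ``fibrewise over $\bsx$, adjusting only $\bsa$'': your two requirements on $V$ are incompatible with the tools you invoke. For the cohomological step you need the comparison region to contain all of $G^{n-1}\times\{\bszero\}$, since it is the equivariant retraction onto the zero section that makes $H^2$ of the region (and of its quotient) vanish. But every such neighborhood contains configurations where all $\Ad_{a_i}x_i$ lie in a common Cartan subalgebra (degenerate polygons, lying over loci $\sum_i \epsilon_i |x_i| = 0$, which pass through every neighborhood of $\bsx = \bszero$). At such points the fibers of the projection to $\bsx$ are singular, and the derivative of the defining equation in the $\bsa$-directions alone has image only the common $2$-plane $y^\perp \subset \frakg$, so no implicit-function-theorem or flow argument produces a map commuting with the projection to $\bsx$; ``closeness of the defining equations'' does not by itself yield one (the saving feature that the BCH discrepancy vanishes identically on the commuting locus would have to be exploited by a genuinely singular analysis, which you do not supply). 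If instead you shrink $V$ to an invariant neighborhood of the level set $(\mu^{\frakg})^{-1}(\scO_{\bsalpha})$, where for generic $\bsalpha$ the fibers are smooth, then the quotient retracts onto $\scN_{\bsalpha}(\RS)$, whose $H^2$ is nonzero, and the class of the difference of the two reduced forms --- the very thing you are trying to prove is zero --- becomes the obstruction to the Moser step. Your deferred ``secondary point'' is also not routine: the leading part of a closed form need not be closed, and what is required is the canonical MGS model form. The standard escape, and the reason the paper's citation of MGS suffices, is that exact preservation of $\bsx$ is never needed: since $\su^n$ has no nonzero invariant vectors, any $G^n$-equivariant symplectomorphism automatically intertwines the moment maps, so one can work relative to the level set $\{\bsx = \bszero\}$ (equivariant tubular neighborhood plus relative Darboux--Weinstein) without any fibrewise control.
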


\begin{proof}[Outline of the proof]
The proposition is proved
by using a canonical local model of Hamiltonian spaces
called the Marle-Guillemin-Sternberg form
\cite{Guillemin-Sternberg_NFMM, Marle}.
Recall that the moment map of the $G^n$-action on the $\frakg$-extended moduli space 
$\scN^{\frakg} (\RS)$ is given by 
\[
  \mu^{\frakg}: 
  \scN^{\frakg}(\RS) \longrightarrow \frakg^n, \quad
  (\bsa, \bsx) \longmapsto - \bsx.
\]
Since the stabilizer of $(\bsone, \bszero) \in  (\mu^{\frakg})^{-1}(\bszero)$ is 
the diagonal subgroup $G \subset G^n$, the fiber $(\mu^{\frakg})^{-1}(\bszero)$ 
is identified with $G^n/G$ by
\[
  G^n/G \longrightarrow (\mu^{\frakg})^{-1}(\bszero), \quad
  [\sigma_1, \sigma_2, \dots, \sigma_n] \longrightarrow
  (\sigma_1 \sigma_2^{-1}, \dots, \sigma_1 \sigma_n^{-1}).
\]
Then the Marle-Guillemin-Sternberg form of 
a neighborhood of $(\mu^{\frakg})^{-1}(\bszero)$ is 
a neighborhood of the zero section of the vector bundle 
$G^n \times_G (\frakg^n/\frakg)^* \to G^n/G$
equipped with the moment map
\[
  \mu_{\mathrm{MGS}}: G^n \times_G (\frakg^n/\frakg)^*  \longrightarrow
  \frakg^n, \quad
  [\bssigma, \bsy] \longrightarrow (\Ad(\sigma_i)y_i)_i.
\]
This implies that
\begin{align*}
  (\mu^{\frakg})^{-1}(\scO_{\bsalpha})/G^n
  &= (\mu_{\mathrm{MGS}})^{-1}(\scO_{\bsalpha})/G^n \\
  &= \{ [\bssigma, \bsy] \in G^n \times_G (\frakg^n/\frakg)^*\, | \,
        \Ad(\sigma_i)y_i \in \scO_{\alpha_i}, i=1, \dots, n \} /G^n \\
  &\cong (\scO_{\bsalpha} \cap \{(x, \dots, x) \in \frakg^n 
        \,| \, x \in \frakg\}^{\perp})/G \\
  &= \{ (x_1, \dots, x_n) \in \scO_{\bsalpha}\, | \,  
        x_1 + \dots + x_n = 0 \}/G \\
  &= \scM_{\bsalpha}.
\end{align*}
\end{proof}

Fix $\bsalpha$ such that $|\bsalpha|<1$,
and consider the family
\[
  f: \frakN (\RS) = \bigcup_{t \in (0,1]} 
  (\scN_{t \bsalpha} (\RS), (1/t)\omega_{\scN_{t \bsalpha}})
  \longrightarrow (0,1]
\]
of symplectic manifolds.
From Proposition \ref{prop:Jeffrey_EMS}, 
a fiber $(\scN_{t\bsalpha}, \omega_{\scN_{t\bsalpha}})$ 
over sufficiently small $t \in (0,1]$
is symplectomorphic to
$(\scM_{t\bsalpha}, \omega_{\scM_{t\bsalpha}})$.
Since $(\scM_{\bsalpha}, \omega_{\scM_{\bsalpha}})$ is 
symplectomorphic to 
$(\scM_{t \bsalpha}, (1/t) \omega_{\scM_{t \bsalpha}})$
by scaling $\bsx \longmapsto t \bsx$,
we can extend the family $f : \frakN(\RS) \to (0,1]$
to a family over $[0,1]$ by setting 
\[
  f^{-1}(0) = (\scM_{\bsalpha}, \omega_{\scM_{\bsalpha}}).
\]

\begin{proposition}
The symplectic trivialization $\{\psi_t\}$ of $\frakN(\RS) \to (0,1]$ 
given in Theorem \ref{thm:isom1}
extends to the family over $[0,1]$.
Moreover this trivialization identifies
Goldman systems 
$(1/t)\Theta_{t\bsalpha, \Gamma} : \scN_{t\bsalpha} \to \bR^{n-3}$ 
and the bending system
$\Phi_{\Gamma}: \scM_{\bsalpha} \to \bR^{n-3}$.
\end{proposition}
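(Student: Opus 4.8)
The plan is to re-express the trivialization in the $\frakg$-extended picture, where the limit $t \to 0$ is transparent. For $t > 0$, writing $b_i = e^{t x_i}$ with $x_i \in \scO_{\alpha_i}$ identifies $(\mu^G)^{-1}(\scC_{t\bsalpha})$ with $\lc (\bsa, \bsx) \relmid x_i \in \scO_{\alpha_i},\ G_t(\bsa, \bsx) = 1 \rc$, where $G_t(\bsa, \bsx) = e^{t x_1} (a_2 e^{t x_2} a_2^{-1}) \cdots (a_n e^{t x_n} a_n^{-1})$; in these coordinates the trivialization \eqref{eq:triv_N^G} keeps $\bsx$ fixed and moves only the $\bsa$-part, by the horizontal lift for a fixed invariant metric. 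Since $G_0 \equiv 1$, Hadamard's lemma gives $G_t - 1 = t\,H_t(\bsa, \bsx)$ with $H_0 = x_1 + \Ad_{a_2} x_2 + \dots + \Ad_{a_n} x_n$, so $G_t = 1$ is equivalent to $H_t = 0$ for $t > 0$ and extends smoothly across $t = 0$. First I would assemble $\lc H_t = 0 \rc$ into a single smooth family over $[0,1]$ extending \eqref{eq:fN}, whose reduced fibre is $\scN_{t\bsalpha}(\RS)$ for $t > 0$, and whose fibre over $0$ is cut out by $x_1 + \Ad_{a_2} x_2 + \dots + \Ad_{a_n} x_n = 0$; after the substitution $y_i = \Ad_{a_i} x_i$ the latter reduces by $G^n$ to the polygon space \eqref{eq:PolygonSp_MWred}, namely $\scM_{\bsalpha}$. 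The horizontal lift then extends $\psi_t$ smoothly to a map $\psi_0 : \scN_{\bsalpha}(\RS) \to \scM_{\bsalpha}$.

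Second, I would check convergence of the symplectic forms. Substituting $b_i = e^{t x_i}$ into the explicit expression \eqref{eq:qHam_2-form} for $\omega_{\scN^G(\RS)}$ and expanding in $t$, every double two-form $\omega_{D_i}$ and every Maurer--Cartan pairing vanishes to zeroth order, because the pullbacks of $\theta$ and $\overline{\theta}$ along $t \mapsto e^{t x_i}$ are $O(t)$; hence the pullback of $\omega_{\scN^G(\RS)}$ along $b_i = e^{t x_i}$ equals $t\,\eta_t$ for a smooth family of two-forms $\eta_t$. After reduction, $(1/t)\,\omega_{\scN_{t\bsalpha}}$ therefore converges as $t \to 0$ to a closed two-form on $\scM_{\bsalpha}$; that this limit is $\omega_{\scM_{\bsalpha}}$ is exactly the comparison of the reduced quasi-Hamiltonian form with the Kostant--Kirillov reduction \eqref{eq:PolygonSp_MWred}, carried out via the Marle--Guillemin--Sternberg normal form in the proof of \pref{prop:Jeffrey_EMS} (whose smallness hypothesis holds for $\scO_{t\bsalpha}$ once $t$ is small). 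Thus $\psi_0$ is a symplectomorphism $(\scN_{\bsalpha}(\RS), \omega_{\scN_{\bsalpha}}) \to (\scM_{\bsalpha}, \omega_{\scM_{\bsalpha}})$, extending $\{\psi_t\}$ to the fibre $f^{-1}(0)$.

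Third, for the integrable systems I would compute the limit of the rescaled Goldman Hamiltonians directly. For a simple closed curve $C$ with $[C] = \gamma_i \cdots \gamma_{i+k}$, the point of $\scN_{t\bsalpha}(\RS)$ corresponding to the polygon $(x_1, \dots, x_n)$ has holonomies $g_j = e^{t x_j}$, so that
\[
  \vartheta_{t\bsalpha, C}
   = \cos^{-1}\!\lb \textstyle\frac12 \tr \lb e^{t x_i} \cdots e^{t x_{i+k}} \rb \rb .
\]
By the Baker--Campbell--Hausdorff formula $e^{t x_i} \cdots e^{t x_{i+k}} = \exp\!\lb t (x_i + \dots + x_{i+k}) + O(t^2) \rb$, and a Taylor expansion at $t = 0$ (with the inner product of \pref{sc:bending}, the same one defining $\varphi_d$) gives $\vartheta_{t\bsalpha, C} = t\,|x_i + \dots + x_{i+k}| + O(t^2)$. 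Hence $(1/t)\,\vartheta_{t\bsalpha, C} \to |x_i + \dots + x_{i+k}|$, which is exactly the bending Hamiltonian $\varphi_d$ for the diagonal $d$ dual to $C$. Since \pref{thm:isom1} gives $(1/t)\,\psi_t^* \Theta_{t\bsalpha, \Gamma} = \Theta_{\bsalpha, \Gamma}$ for $t > 0$ and the trivialization extends continuously, letting $t \to 0$ yields $\psi_0^* \Phi_{\Gamma} = \Theta_{\bsalpha, \Gamma}$.

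The step I expect to be the main obstacle is the smooth extension of the metric horizontal lift across $t = 0$ near the singular strata, where the holonomy along some $C_i$ becomes central. There \pref{cor:stratification} only provides a local cone model $V \times C(L)$, so one must choose the $G^n$-invariant metric and bound the horizontal lift uniformly in $t$ to guarantee that $\psi_t$ and the limiting reduced form remain well behaved; the rescaling by $1/t$ is precisely what keeps the reduced two-form nondegenerate in the limit, matching the Kostant--Kirillov forms on the orbits $\scO_{\alpha_i}$.
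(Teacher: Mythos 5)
Your first and third steps are essentially the paper's own argument in different clothing: the paper also defines $\psi_0$ by the first-order expansion of trajectories (writing $\psi_t(\bsg)=(e^{x_1(t)},\dots,e^{x_n(t)})$ with $x_i(t)=tx_i+O(t^2)$, so that the product constraint forces $x_1+\dots+x_n=0$), and its identification of the integrable systems is the same eigenvalue/BCH expansion $y_k(t)=t(x_{i_k}+\dots+x_{i_k+n_k})+O(t^2)$ followed by the observation that Theorem \ref{thm:isom1} makes $(1/t)\vartheta_{t\bsalpha,C_k}$ independent of $t$, so the $O(t)$ error vanishes identically. The genuine gap is in your second step. You correctly show that the pullback of $\omega_{\scN^G(\RS)}$ under $b_i=e^{tx_i}$ is $t\,\eta_t$ for a smooth family $\eta_t$, so a limiting two-form exists; but you then assert that identifying the reduced limit with $\omega_{\scM_{\bsalpha}}$ ``is exactly the comparison carried out in the proof of \pref{prop:Jeffrey_EMS}.'' It is not. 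That proof applies the Marle--Guillemin--Sternberg normal form to the $\frakg$-extended moduli space $\scN^{\frakg}(\RS)$ equipped with \emph{Jeffrey's} closed two-form, which is a different object from the linearization $\eta_0$ of the quasi-Hamiltonian two-form \eqref{eq:qHam_2-form}; neither the paper nor your argument establishes any relation between these two forms, even after restriction to the constraint set and reduction. Nor does the abstract symplectomorphism $(\scN_{t\bsalpha},\omega_{\scN_{t\bsalpha}})\cong(\scM_{t\bsalpha},\omega_{\scM_{t\bsalpha}})$ of \pref{prop:Jeffrey_EMS} help, because the proposition requires the equality $\psi_0^*\omega_{\scM_{\bsalpha}}=\omega_{\scN_{\bsalpha}}$ under the \emph{specific} limit map $\psi_0$, not an identification by some unrelated symplectomorphism. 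So the central content of the symplectic statement is left unproved in your proposal.

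The paper fills precisely this step by a direct computation on the quotient, using the Guruprasad--Huebschmann--Jeffrey--Weinstein formula \eqref{eq:symp_par} rather than the quasi-Hamiltonian form upstairs: writing $u(t)(\gamma_i)=\Ad_{g_i(t)}\xi_i(t)-\xi_i(t)=t[x_i,\xi_i]+O(t^2)$, the cup-product term of \eqref{eq:symp_par} is $O(t^2)$, while the boundary correction term equals $t\sum_{i=1}^n\langle x_i,[\xi_i,\eta_i]\rangle+O(t^2)$, which is exactly $t\,\omega_{\scM_{\bsalpha}}(\bsxi,\bseta)$, i.e.\ $t$ times the Kostant--Kirillov form of the reduction \eqref{eq:PolygonSp_MWred}; since Theorem \ref{thm:isom1} makes $(1/t)\omega_{\scN_{t\bsalpha}}(u(t),v(t))$ independent of $t$, the $O(t)$ error vanishes and the equality is exact. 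To repair your argument you would have to do the analogous work yourself: either expand $\eta_0$ explicitly from \eqref{eq:qHam_2-form} and identify its reduction on $\{x_1+\Ad_{a_2}x_2+\dots+\Ad_{a_n}x_n=0\}/G^n$ with the Kostant--Kirillov reduction, or prove that $\eta_0$ and Jeffrey's two-form on $\scN^{\frakg}(\RS)$ have the same reduction. As written, the citation does not cover the step, and this is the one place where your route and the paper's genuinely diverge.
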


\begin{proof}
Fix $\bsg \in \scN_{\bsalpha}$ and let
\begin{equation}
  \bsg(t) = (g_1(t), \dots, g_n(t)) =
  (e^{x_1(t)}, \dots, e^{x_n(t)}) 
  := \psi_t (\bsg) \in \scN_{t \bsalpha}
  \label{eq:trivialization_g}
\end{equation}
be the trajectory of $\psi_t$
starting from $\bsg$.
Then $\bsx(t) = (x_1(t), \dots, x_n(t))$ is a smooth curve in 
$\bigcup_t \scO_{t \bsalpha}$ of the form
$x_i(t) = t x_i + O(t^2)$.
Since 
$g_1(t) \dots g_n(t) = 1 + t(x_1 + \dots + x_n) + O(t^2)$, 
the point $\bsx = (x_1, \dots, x_n)$
lies in $\scM_{\bsalpha}$.
We also take smooth families of tangent vectors 
$u(t), v(t) \in T_{\bsg(t)} \scN_{t\bsalpha}$ such that
$d\psi_t (u(1)) = u(t)$ and $d\psi_t (v(1)) = v(t)$.
Then
\[
  \frac 1t \omega_{\scN_{t\bsalpha}}(u(t), v(t))
  = \omega_{\scN_{\bsalpha}}(u(1), v(1))
\]
for all $t \in (0,1]$.
Let $\bsxi(t) = \bsxi + O(t)$, $\bseta(t) = \bseta + O(t)$
be smooth curves in $\frakg^n$ such that
\begin{align*}
  u(t)(\gamma_i) &= \Ad_{g_i(t)}\xi_i(t) - \xi_i(t),\\
  v(t)(\gamma_i) &= \Ad_{g_i(t)}\eta_i(t) - \eta_i(t).
\end{align*}
Since
\begin{equation}
  \Ad_{g_i(t)} \xi_i(t) - \xi_i(t)
  = t[x_i, \xi_i] + O(t^2),
  \label{eq:asympt_u}
\end{equation}
$\bsxi = (\xi_1, \dots, \xi_n)$, 
$\bseta = (\eta_1, \dots, \eta_n) \in \frakg^n$ give 
tangent vectors of $\scM_{\bsalpha}$ at $\bsx$.
Note that \eqref{eq:asympt_u} also implies  that
$$
  (u(t) \cup v(t) ) [\pi_1(\RS), \partial \pi_1(\RS)] )
  = O(t^2).
$$
On the other hand, the second term of $\omega_{\scN_{t \bsalpha}}$
in (\ref{eq:symp_par}) has the form
$$
  \frac 12 \sum_{i=1}^n
      (\langle \xi_i(t), \Ad_{g_i(t)}\eta_i(t) \rangle
      - \langle \eta_i(t), \Ad_{g_i(t)}\xi_i(t) \rangle )
  = t \sum_{i=1}^n \langle x_i, [\xi_i, \eta_i] \rangle
  + O(t^2).
$$
Thus we have
$$
  \frac 1t \omega_{\scN_{t\bsalpha}}(u(t), v(t)) 
  = \omega_{\scM_{\bsalpha}}(\bsxi, \bseta) + O(t).
$$
Since the left hand side is independent of $t$,
we have
$$
  \frac 1t \omega_{\scN_{t\bsalpha}}(u(t), v(t)) 
  = \omega_{\scM_{\bsalpha}}(\bsxi, \bseta),
$$
or equivalently 
$\psi_0^* \omega_{\scM_{\bsalpha}} = \omega_{\scN_{\bsalpha}}$.

Next we show that the integrable systems are identified.
Suppose that the $k$-th boundary component $C_k$ is given by
$[C_k] = \gamma_{i_k} \dots \gamma_{i_k+n_k}$.
If we write
$$
  g_{i_k}(t) \dots g_{i_k+n_k}(t) = e^{y_k(t)}
$$
for $y_k(t) \in \frakg$, then $y_k(t)$ has eigenvalues
$\pm \vartheta_{t\bsalpha , C_k}(\bsg(t))$.
Since 
$$
  y_k(t) = t(x_{i_k} + \dots + x_{i_k+n_k}) + O(t^2)
$$ 
and the eigenvalues of $x_{i_k} + \dots + x_{i_k+n_k}$ are
$\pm \varphi_{d_k} (\bsx)$, 
we have
$$
  \frac 1t \vartheta_{t \bsalpha, C_k}(\bsg(t)) 
  = \varphi_{d_k}(\bsx) + O(t).
$$
Theorem \ref{thm:isom1} implies that
the left hand side is also independent of $t$,
and hence $\frac{1}{t} \vartheta_{t \bsalpha, C_k}$ is 
identified with $\varphi_{d_k}$
by the trivialization.
\end{proof}



\bibliographystyle{amsalpha}
\bibliography{bibs}

\noindent
Yuichi Nohara

Faculty of Education,
Kagawa University,
Saiwai-cho 1-1,
Takamatsu,
Kagawa,
760-8522,
Japan.

{\em e-mail address}\ : \  nohara@ed.kagawa-u.ac.jp
\ \vspace{0mm} \\

\noindent
Kazushi Ueda

Department of Mathematics,
Graduate School of Science,
Osaka University,
Machikaneyama 1-1,
Toyonaka,
Osaka,
560-0043,
Japan.

{\em e-mail address}\ : \  kazushi@math.sci.osaka-u.ac.jp
\ \vspace{0mm} \\

\end{document}